\documentclass{amsart}

\usepackage{amsmath,amsthm,amscd,amssymb}
\usepackage{tikz}
\usepackage{mathrsfs}
\usepackage{comment}
\usepackage{graphicx}
\usepackage{enumerate}

\usepackage{color}

\usepackage{amsmath, amssymb,color,amsthm,graphicx}
\newtheorem{main}{Theorem}
\theoremstyle{plain}
\newtheorem{thm}{Theorem}[section]
\newtheorem{cor}[thm]{Corollary}
\newtheorem{lem}[thm]{Lemma}
\newtheorem{clm}[thm]{Claim}

\theoremstyle{definition}
\newtheorem{dfn}[thm]{Definition}

\theoremstyle{remark}
\newtheorem{remark}[thm]{Remark}

\numberwithin{equation}{section}

\usepackage{bm}
\usepackage{float}

\title[Non-existence of Lyapunov exponents in the Newhouse domain]{Non-existence of Lyapunov exponents in the Newhouse domain}

\author{Shin Kiriki}
\address[Shin Kiriki]{Department of Mathematics, Tokai 
University, 4-1-1 Kitakaname, Hiratsuka, Kanagawa, 259-1292, JAPAN}
\email{kiriki@tokai.ac.jp}

\author{Xiaolong Li}
\address[Xiaolong Li]{School of Mathematics and Statistics, 
Huazhong University of Science and Technology, Luoyu Road 1037, Wuhan, 430074, CHINA}
\email{lixl@hust.edu.cn}

\author{Yushi Nakano}
\address[Yushi Nakano]{
Department of Mathematics, Hokkaido University, Kita 10, Nishi 8, Kita-Ku, Sapporo, Hokkaido, 060-0810, 
JAPAN}
\email{yushi.nakano@math.sci.hokudai.ac.jp}

\author{Teruhiko Soma}
\address[Teruhiko Soma]{Department of Mathematical Sciences, 
Tokyo Metropolitan University, 1-1 Minami-Osawa, Hachioji, Tokyo, 192-0397, JAPAN}
\email{tsoma@tmu.ac.jp}

\subjclass[2020]{37C05, 37C40, 37C29}

\keywords{Lyapunov exponent; homoclinic tangency; Newhouse domain; non-hyperbolic dynamics; wandering domain}

\begin{document}

\begin{abstract}
We show that within the Newhouse domain of $C^r$ surface diffeomorphisms ($r \in [2,\infty )$), there exists a dense subset $\mathcal{D}$ such that for any $f \in \mathcal{D}$, Lyapunov exponents fail to exist for all points in some open set $U$ and all nonzero tangent vectors in some open cone $V \subset \mathbb{R}^2$.
This demonstrates that the non-existence of Lyapunov exponents is a persistent phenomenon in the setting of robust homoclinic tangencies.
The proof relies on constructing diffeomorphisms exhibiting specific oscillatory return times near a homoclinic tangency, incorporating techniques from Newhouse theory and recent results on Lyapunov irregularity, alongside several refinements and new arguments.
\end{abstract}

\maketitle

\section{Introduction}

Dynamical systems with a robust homoclinic tangency -- those in the so-called Newhouse domain -- are a major source of extremely non-hyperbolic dynamics and have long been known to exhibit a rich variety of complex dynamical phenomena, 
such as infinitely many sinks \cite{N1974}, infinitely many strange attractors \cite{Colli1998}, super-exponential growth of the number of periodic points \cite{Kaloshin2000}, universal dynamics \cite{Turaev2015}, historic wandering domains \cite{KS2017,KNS2022} and pluripotency \cite{KLNSV2025}.
This paper focuses on statistical irregularities associated with one of the most fundamental quantities in dynamical systems: the Lyapunov exponent, which quantifies the sensitivity of an orbit to initial conditions and is widely used as an indicator of chaos in the natural sciences.
We show that, within the Newhouse domain, Lyapunov exponents persistently fail to exist on open sets of points and tangent vectors:

\begin{main}\label{thm:main}
Given a closed surface $M$ and $r\in [2,\infty )$, there is a dense subset $\mathcal D$ of the Newhouse domain of 
$\mathrm{Diff}^r(M)$ 
such that for any $f\in \mathcal D$, there exist nonempty open sets $U\subset M$ and  $V\subset \mathbb R^2$, under the identification of $TU$ with $U \times \mathbb R^2$, such that the Lyapunov exponent
\[
\text{$\displaystyle \lim_{n\to+\infty} \frac{\log ||Df^n(x)v||}{n}$ does not exist}
\]
for all $x\in U$ and nonzero $v\in V$.
\end{main}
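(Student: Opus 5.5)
We will combine the construction of historic wandering domains of \cite{KS2017,KNS2022} with the oscillation mechanism for Lyapunov exponents. Start with $g$ in the Newhouse domain. By Newhouse theory we may $C^r$-perturb $g$ so that it has a dissipative saddle $p$ with a basic set $\Lambda\ni p$ of large thickness and a robust quadratic homoclinic tangency, together with a sequence of quadratic tangencies $q_k$ of $W^u(p)$ and $W^s(p)$ accumulating on a chosen tangency; we also linearize near $p$ ($C^r$ linearization after a small perturbation, using nonresonance of the eigenvalues $0<\lambda<1<\sigma$, $\lambda\sigma<1$). As in \cite{KS2017}, the key object is a sequence of small rectangles $B_k$ near the tangency such that $f^{N_k}(B_k)\subset B_{k+1}$, where $N_k=n_k+m_k$ is the sum of a transit time $n_k$ spent in the linearized neighborhood of $p$ and a bounded time $m_k$ along the homoclinic excursion. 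A small open set $U\subset B_1$ is then a wandering domain whose orbit follows the prescribed itinerary. We will choose the integers $n_k$ (equivalently, by the perturbations, the positions of the tangencies hit by $f^{N_{k-1}}(B_{k-1})$) so that the itinerary has the oscillatory structure needed below; the perturbations are of size $\to0$ in $C^r$ and supported on disjoint small disks, so their sum converges to some $f\in\mathcal D$ arbitrarily close to $g$.

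Next we compute derivatives along orbits in $U$. In linear coordinates, $Df^{n}$ on the passage near $p$ is $\mathrm{diag}(\lambda^{n},\sigma^{n})$, and near each tangency the perturbation is designed so that the return map $f^{m_k}$ restricted to $B_k$ is, up to $C^1$-small errors, affine: we arrange (this is the new ingredient relative to the wandering domain construction) that the quadratic fold is flattened on $B_k$, i.e.\ $Df^{m_k}$ is close to a fixed matrix $T$ which swaps the roles of the stable and unstable directions, as in the Colli--Vargas type models. Then $Df^{N_k}|_{B_k}\approx T\,\mathrm{diag}(\lambda^{n_k},\sigma^{n_k})$, and the composition along the itinerary is, for all $x\in U$, uniformly close to a product of such matrices. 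Choosing $T$ that maps the horizontal direction to the vertical one and vice versa, the product becomes essentially diagonal with entries alternating between $\lambda^{n_k}$ and $\sigma^{n_k}$, so for $v$ in an open cone $V$ around a fixed direction transverse to both axes, $\log\|Df^{N_1+\dots+N_k}(x)v\|\approx \sum_{j\le k}\pm n_j\cdot\log\sigma$ or $\log\lambda$ with controlled sign pattern, with errors $O(k)$ uniform in $x\in U,v\in V$.

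Finally choose $n_k$ growing super-exponentially (e.g.\ $n_{k+1}\ge k\sum_{j\le k}N_j$). Then at time $S_k=\sum_{j\le k}N_j$ the quantity $\frac1{S_k}\log\|Df^{S_k}(x)v\|$ is dominated by the last block and approaches $\log\sigma>0$ or $\log\lambda<0$ depending on the parity of $k$ (arranged via $T$), so the limit in Theorem~\ref{thm:main} does not exist. The cone $V$ must avoid directions mapped close to the contracted axis; since the blocks alternate, we check that the expanding direction of each block is transverse to the image of $V$, which is ensured by the open cone condition preserved by $T$ and the diagonal maps.

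The main obstacle is the second step: making the return maps near the tangencies have derivative uniformly close to a prescribed matrix on the whole shrinking box $B_k$, while keeping the perturbation $C^r$-small. Near a quadratic tangency the derivative varies along $B_k$ with second-order terms, and the boxes have size $\sim\lambda^{n_k}$ in one direction and $\sigma^{-n_k}$ in the other; I would try the rescaling (renormalization) estimates from \cite{KNS2022}, showing that the curvature contributes relative errors $O(\sigma^{-n_k/2})$ which are summable, so the accumulated error in $\log\|Df^n v\|$ is $o(n)$.
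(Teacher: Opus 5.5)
Your second and third steps have a genuine gap. No $C^r$-small perturbation ($r\ge 2$) can flatten the fold: straightening the unstable arc at a quadratic tangency on any interval means changing its second derivative by the full curvature, and that is not small in $C^2$. So the passage through the tangency stays quadratic (this is exactly what \textbf{QT} records), and then your super-exponential choice of $n_k$ is impossible.

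To see this, let $h_k$ be the unstable thickness of $B_k$. A vertical segment of length $h_k$ has length $\sigma^{n_k}h_k$ after the passage, and its folded image has vertical extent $\gtrsim(\sigma^{n_k}h_k)^2$. Hence $h_k\lesssim\sigma^{-n_k}h_{k+1}^{1/2}$, and iterating (all $h_j$ being bounded) gives $h_k\lesssim\sigma^{-\sum_{i\ge0}n_{k+i}2^{-i}}$. With $n_{k+1}\ge k\sum_{j\le k}N_j$ this series diverges, so no box with nonempty interior exists. This is why the paper takes $n_k^H$ exponential with ratios $\alpha,\beta\in(1,2)$ and builds the heights of $U_{k,m}$ from $b_k=\lambda_u^{-3\sum_i n^H_{k+i}/2^i}$. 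Your box sizes $\lambda^{n_k}\times\sigma^{-n_k}$ are also inconsistent: a height $\sigma^{-n_k}$ becomes of order one during the passage, so its folded image cannot lie in a small $B_{k+1}$.

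Your error analysis is wrong in kind, not only in size. Let $x'$ be the point reached after the passage, at distance $\delta_k$ from the point $X'$ of the exact tangential orbit. The $(2,2)$-entry of $Df^{m_k}(x')$ is of order $\delta_k$ (it vanishes at $X'$). In $Df^{m_k}(x')\,\mathrm{diag}(\lambda^{n_k},\sigma^{n_k})$ it becomes $\delta_k\sigma^{n_k}$, which competes with the $(2,1)$-entry of order $\lambda^{n_k}$. If it wins, the vertical component leaving the fold is a $\delta_k$-fraction of the component just expanded, not the contracted one. The next passage then expands the vector again, and the alternation collapses. So this is not a summable relative error in $\log\|Df^nv\|$. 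An error like $\sigma^{-n_k/2}$, let alone a merely $C^1$-small one, is far too large, since $\sigma^{n_k/2}\gg\lambda^{n_k}$.

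The paper's Lemma~\ref{lem:5.3} supplies the needed bound $\delta_k\lambda_u^{n_k^H}\le\xi\lambda_s^{n_k^H}$, and reconciling it with the nesting is the heart of the proof. It requires:
\begin{enumerate}
\item the extra thinning factors $\epsilon_{k,m}$ in Lemma~\ref{lem:mrl};
\item strong dissipativity $\lambda_s\lambda_u^3<1$, obtained by renormalizing to H\'enon-like maps (you only have $\lambda\sigma<1$);
\item the inequalities in \textbf{OE};
\item the inductive control of the off-diagonal terms in Lemma~\ref{lem:250404a}.
\end{enumerate}

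Finally, even in your idealized anti-diagonal model the parity claim is false. $A^{(k)}$ has entries $\sigma^{n_k}\lambda^{n_{k-1}}\sigma^{n_{k-2}}\cdots$ and $\lambda^{n_k}\sigma^{n_{k-1}}\lambda^{n_{k-2}}\cdots$. For $v$ with both components nonzero the first always dominates, so at every $S_k$ the exponent tends to $\log\sigma$, whatever the parity. In the paper too, $\|Df^{N_m}(x)v\|$ is comparable to $\Lambda_u^{(m)}$ for all $m$ (see \eqref{eq:0308d}). The oscillation at block ends comes from alternating the growth ratios, which gives the two different limits in Lemma~\ref{lem:250404b}. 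In your model you could instead use times inside a passage where the two components balance, where the exponent is about $\frac12\log(\lambda\sigma)$, but that does not repair the construction.
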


In light of the widely believed dichotomy suggesting that `generic' surface diffeomorphisms are either 
hyperbolic
or 
contain homoclinic tangencies, together with its higher-dimensional counterpart involving dynamics with heterodimensional cycles \cite{PS2000,BDP2003,P2005}, our result provides significant insight into the statistical properties of non-hyperbolic dynamics. It highlights that complex behavior regarding Lyapunov exponents, specifically their non-existence, widely occurs within these non-hyperbolic systems, potentially contributing to a more complete understanding of dynamics beyond uniform hyperbolicity.

Despite the fundamental importance of Lyapunov exponents in characterizing dynamical behavior and their applications in various scientific fields (cf.~\cite{Viana2014}), the phenomenon where these exponents fail to converge for observable points or vectors (i.e.~those in a positive Lebesgue measure set) has remained relatively less explored.
Early work by Ott and Yorke \cite{OY2008} presented the first example of such non-convergence in a surface flow with an attracting homoclinic loop.
More recently, our previous work \cite{KLNS2022} aimed to uncover the underlying mechanisms driving the non-existence of Lyapunov exponents
 for certain surface diffeomorphisms.
This mechanism was subsequently adapted to demonstrate the non-existence of Lyapunov exponents in a class of two-dimensional piecewise expanding maps \cite{NSY2023}.

This paper builds upon and integrates the techniques developed in the previous work \cite{KLNS2022} with the rich technical machinery accumulated since the era of Newhouse, Palis and Takens \cite{PT1995} for studying the Newhouse domain.
 In particular, it leverages the methods for constructing diffeomorphisms with prescribed return time properties of wandering domains developed by Kiriki and Soma~\cite{KS2017}, which were themselves inspired by earlier work of Colli and Vargas~\cite{CV2001}.
 Furthermore, we introduce several new arguments and refinements.
Specifically, 
accurately estimating certain newly arising off-diagonal terms (cf.~the (1,2)-entry of \eqref{eq:0412}) in the derivative matrices presents significant challenges.
This necessitated modifications to the arguments, the development of a new `Modified Rectangle Lemma' (Section \ref{Sec3}), an inductive analysis of the products of Jacobian matrices for the lower bounds needed to control the relevant coefficients despite the presence of newly arising off-diagonal terms (Section \ref{s:6}),
and a fine-tuning of the arguments both in \cite{KS2017} and \cite{KLNS2022} to establish the required properties, including specific conditions on 
strong dissipativity, and exponential oscillations of return times (conditions detailed in Section \ref{Sec2}).

Another observation in our construction suggests that the mechanism leading to the non-existence of Lyapunov exponents identified here operates independently of Birkhoff irregularity (non-existence of time averages).
Indeed, one can show the convergence of time averages for any continuous observable at every point within the open set $U$ of Theorem \ref{thm:main} on which the Lyapunov exponent does not exist (see Remark \ref{rmk:1012}), emphasizing that these two quantities are independent.

We close this introduction by presenting an application of Theorem \ref{thm:main} to the H\'enon family, 
i.e., diffeomorphisms on $\mathbb{R}^2$ given by $f_{a,b}(x, y)=(1 - ax^2 + by, x)$ with $(a,b)\in\mathbb R^2$, 
 in sharp contrast to pioneering works \cite{BC91, MV93, BY93, BV01} on the existence and positivity of Lyapunov exponents of the H\'enon family.
The following is an immediate consequence of Theorem \ref{thm:main}.
\enlargethispage{\baselineskip}
\begin{cor}
There is an open set $\mathcal{O}$ in $\mathbb R^2$ 
with 
$(2, 0) \in\overline{\mathcal O}$ such that, for every 
$(a,b) \in\mathcal O$, 
the H\'enon map $f_{a,b}$ can be $C^{r}$-approximated by diffeomorphisms with a nonempty open set
on which Lyapunov exponents do not exist.
\end{cor}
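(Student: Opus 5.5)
The plan is to derive the corollary directly from Theorem \ref{thm:main}, using the known fact that the Newhouse domain accumulates on the Hénon family near $(a,b)=(2,0)$. First I fix the parameter set. By the classical work of Newhouse and Palis--Takens \cite{PT1995}, for small $|b|\neq 0$ and $a$ near $2$ there are open parameter regions in which $f_{a,b}$ has a basic set of large thickness. Such a basic set has robust homoclinic tangencies; the tangency is the one unfolded at $a=2$, where the horseshoe of the quadratic map $1-2x^2$ has a fold. This is the standard Newhouse phenomenon for the Hénon family, so there is an open set $\mathcal O\subset\mathbb R^2$ with $(2,0)\in\overline{\mathcal O}$ such that for every $(a,b)\in\mathcal O$ the map $f_{a,b}$ lies in the $C^r$ Newhouse domain. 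Since $f_{a,b}$ depends continuously on $(a,b)$ in $C^r$, and tangencies of a thick horseshoe persist, this set can indeed be taken open.

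Next I apply Theorem \ref{thm:main}. The obstacle is that the theorem is stated for a closed surface $M$, whereas $f_{a,b}$ acts on $\mathbb R^2$. To bridge this, I fix a large closed disk $D$ containing the relevant horseshoe $\Lambda_{a,b}$ together with its tangency, uniformly for $(a,b)$ near the chosen parameter. Then I embed $D$ into a closed surface $M$, say $S^2$, and extend $f_{a,b}|_{D}$ to a diffeomorphism $F$ of $M$. Hénon maps are not compactly supported, so some care is needed: I would take $D$ to be a trapping-type region for the nonwandering dynamics near the horseshoe, or simply glue by isotopy outside a slightly smaller disk.

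Inspecting the proof of Theorem \ref{thm:main} shows that the perturbations producing $\mathcal D$ are supported in an arbitrarily small neighborhood of the homoclinic tangency. They can therefore be chosen supported inside $D$, and the open set $U$ arising there also lies in $D$. Applying the theorem to $F$ gives $g\in\mathcal D$ arbitrarily $C^r$-close to $F$, with $g=F$ outside a compact subset of the interior of $D$. Defining $\tilde g=g$ on $D$ and $\tilde g=f_{a,b}$ elsewhere yields a diffeomorphism of $\mathbb R^2$ that is $C^r$-close to $f_{a,b}$, in the strong or compact-open topology.

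Finally, I transfer the conclusion. Because $U\subset D$, the orbits used in Theorem \ref{thm:main} must stay in the region where $\tilde g=g$ for all forward times. Here I use that $U$ consists of points in wandering domains whose orbits accumulate on the horseshoe near the tangency, which is contained in $D$. Hence $\lim_{n} n^{-1}\log\|D\tilde g^n(x)v\|$ fails to exist for all $x\in U$ and nonzero $v\in V$.

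The main obstacle is the localization: checking that both the perturbation and the forward orbits of $U$ remain inside $D$. This holds because everything in the construction happens in a small neighborhood of $\Lambda_{a,b}\cup\{\text{tangency orbit}\}$. Everything else is a direct citation of the Newhouse--Palis--Takens results for the Hénon family.
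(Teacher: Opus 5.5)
Your proposal is correct and follows the route the paper has in mind when it calls the corollary an immediate consequence of Theorem~\ref{thm:main}: classical Newhouse--Palis--Takens theory places $f_{a,b}$ in the Newhouse domain for $(a,b)$ in an open set accumulating at $(2,0)$, and the density of $\mathcal D$ does the rest. Your compactification to $S^2$ (via the isotopy gluing; the ``trapping region'' alternative is unnecessary) and the localization argument make explicit a point the paper leaves tacit, namely that Theorem~\ref{thm:main} is stated only for closed surfaces. You rightly note that this step depends on the construction behind the theorem rather than on its statement alone: the perturbations are supported in a compact neighborhood of $\Lambda_{a,b}$ and the tangency orbit, not literally an arbitrarily small neighborhood of the tangency, and the forward orbits of $U$ shadow that set.
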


\section{Non-existence of Lyapunov exponents in abstract settings}\label{Sec2}

Throughout this paper, let $r$ be a fixed real number with $r\ge 2$, $M$ a $C^r$ closed surface and $\mbox{Diff}^r(M)$ the space of all $C^r$ diffeomorphisms of $M$ endowed with the $C^r$ topology. 
A guiding principle of this paper is that one important source of oscillation of Lyapunov exponents is the interaction between hyperbolic dynamics and tangential returns. The former produces strong anisotropic growth, while the latter reconfigures the expanding and contracting directions. The following conditions are designed to capture this hybrid mechanism in an explicit and quantitative form.

\begin{dfn}
We say that $f\in \mathrm{Diff}^r(M)$ satisfies {\bf DH}, {\bf H}, {\bf QT} and {\bf OE} if there are two sequences $(n_k^H)_{k\ge 1}$, $( n_k^T)_{k\ge 1}$ of positive integers with $\lim\limits_{k\to\infty}n_k^H
=+\infty$ and a sequence of points $(\zeta_k)_{k\ge 1}$ of $M$ such that the following conditions hold.
\begin{itemize}
\item[{\bf DH}] (Dominance of hyperbolic return times) 
For each $k\in \mathbb N$, with the notation $n_k=n_k^H+n_k^T$, we have 
\[
f^{n_k}(\zeta_k)  = \zeta_{k+1}\quad \text{and} \quad \lim_{k\to \infty} \frac{n_k^H}{n_k} =1.
\]

\item[{\bf H}] (Strongly dissipative hyperbolic region) 
There exist 
an open set $K\subset M$ 
and a $C^r$ coordinate on $K$ in which
\[
f(x,y) = (\lambda_s x,\, \lambda_u y)
\]
with constants $0 < \lambda_s < 1 < \lambda_u$ satisfying 
\[
\lambda_s \lambda_u^3 < 1.
\]
Furthermore, there exists a nonempty closed set $K'\subset K$ such that
\[
f^j(\zeta_k)\in K'
\]
for every $k\ge 1$ and $0\le j\le n_k^H$.
\item[{\bf QT}] (Quadratic tangential return)
For each $k\ge 1$, writing $\zeta_{k+1}'=f^{n_k^H}(\zeta_k)$, we have
\[
Df^{n_k^T}(\zeta_{k+1}')v^u\in \mathbb R v^s.
\]
Moreover, there exists a constant $C_{T}>0$, independent of $k$, such that for every line segment
$
\gamma\subset \ell^u(\zeta_{k+1}')\cap K
$
containing $\zeta_{k+1}'$,
\[
\bigl|\pi^u_{k+1}(f^{n_k^T}(\gamma))\bigr|
\le
C_{T}
\bigl|\pi^s_{k+1}(f^{n_k^T}(\gamma))\bigr|^2.
\]
Here $v^s=(1,0)$ and $v^u=(0,1)$ are the horizontal and vertical unit vectors, respectively, $\ell^\sigma(z)$ denotes the lines in $K$ through $z$ parallel to $v^\sigma$ ($\sigma\in\{s,u\}$), and $\pi^\sigma_{k}$ denotes the orthogonal projection onto $\ell^\sigma(\zeta_{k})$.
\item[{\bf OE}] (Oscillatory and exponential growth of return times) 
There are real numbers $\alpha,\beta$ such that
\begin{align}\label{001}
\begin{split}
&1<\alpha <\beta<2,
\quad \dfrac{5}{2}\alpha-2-\frac{1}{\alpha\beta}<0,\quad
\lambda_s\lambda_u^{ \frac{9\beta-6}{2-\beta}}<1,\quad 
\lambda_u(\lambda_s\lambda_u^3)^{\alpha\beta-1} >1, \\
&\lambda_s ^{\frac{1}{2}}\lambda_u ^{\frac{6\beta}{2-\beta}-\frac{3}{2-\alpha}-\frac{9\beta-6}{2(2-\beta)}}<1, \quad
\lambda_s\lambda_u^{\frac{7\alpha}{2}}
>
\lambda_s\lambda_u^{\frac{9\beta-6}{2-\beta}},
\quad
\Bigl(\lambda_s\lambda_u^{\frac{7\alpha}{2}}\Bigr)^{\alpha\beta}
>
\lambda_s\lambda_u^{\frac{9\beta-6}{2-\beta}}.
\end{split}
\end{align}
Moreover, $n_k^H$ can be expressed with a positive integer $n_0^H$, according to whether $k$ is odd or even, as
\[
n_{2p}^H=\lfloor n_0^H \alpha^p\beta^p\rfloor  \quad \text{and} \quad n_{2p+1}^H=\lfloor n_0^H \alpha^{p+1}\beta^p\rfloor, 
\]
where $\lfloor x\rfloor$ is the greatest integer not exceeding a real number $x$.
\end{itemize}
\end{dfn}
\begin{remark}\label{rmk:0307}
The inequalities in \eqref{001} seem complicated at first sight. 
However, if one has the freedom of choosing $\alpha$ and $\beta$ sufficiently close to $1$, for instance, as in \cite{KLNS2022r}, then the inequalities reduce to ${\lambda}_s{\lambda}_u^3<1$, which is already required in {\bf H}.
\end{remark}

Recall that the \emph{Newhouse domain} is the open set of $\mbox{Diff}^r(M)$ such that every element of it admits a hyperbolic basic set displaying a $C^r$ robust homoclinic tangency and an area contracting periodic point. The existence of the $C^2$ Newhouse domain was shown by Newhouse for surface diffeomorphisms \cite{N1970, N1974}.   
The following result is established by refining the arguments of \cite[Theorem A]{KS2017}.
\begin{thm}\label{thm:KS}
There is a dense subset $\mathcal D$ of the Newhouse domain of $\mathrm{Diff}^r(M)$ such that every element of $\mathcal D$ satisfies the conditions {\bf DH}, {\bf H}, {\bf QT} and {\bf OE}.
\end{thm}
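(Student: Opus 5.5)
We follow the scheme of \cite[Theorem A]{KS2017}, which in turn descends from Colli--Vargas \cite{CV2001}. The construction will be carried out for each $f$ in the Newhouse domain and each neighborhood $\mathcal U$ of $f$.

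\emph{Step 1: normal form.} Since the homoclinic tangency is $C^r$-robust and $f$ has an area contracting periodic point, Newhouse's theory lets us perturb $f$ inside $\mathcal U$ to some $f_1$ with the following properties. There is a saddle periodic point $p$ (after passing to an iterate, a fixed point) whose eigenvalues satisfy $\lambda_s\lambda_u^3<1$. There is a quadratic homoclinic tangency $q$ associated to $p$. Using the Sternberg/Takens $C^r$ linearization, available after a generic perturbation of the eigenvalues to avoid resonances, $f_1(x,y)=(\lambda_s x,\lambda_u y)$ on a neighborhood $K$ of $p$. This gives {\bf H} with $K'$ a compact neighborhood of $p$. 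Near the tangency, the transition map $f_1^{N}$ from a neighborhood of $q$ in $W^u_{\rm loc}$ back to $K$ takes the form $(x,y)\mapsto (a y + \dots,\, b(x-x_0)^2 + c y+\dots)$. The strong dissipativity $\lambda_s\lambda_u^3<1$ can be reached because area contraction near $p$ is available in the Newhouse domain: it can be transferred to the basic set by perturbing along a heteroclinic connection to the dissipative periodic point, as in \cite{KS2017}. Next we fix $\alpha,\beta$ satisfying \eqref{001}. By Remark \ref{rmk:0307}, choosing $\alpha,\beta$ close to $1$ and using $\lambda_s\lambda_u^3<1$ (together with $\lambda_u(\lambda_s\lambda_u^3)^{\alpha\beta-1}>1$, which holds for $\alpha\beta$ close to $1$), all the inequalities hold. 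We then set $n_{2p}^H$ and $n_{2p+1}^H$ as prescribed in {\bf OE}, with $n_0^H$ large.

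\emph{Step 2: a sequence of tangency-type returns.} Following \cite{KS2017}, we use the thickness of the Cantor sets of the basic set (Newhouse gap lemma) to find a sequence of curves of tangencies, or ``bridges'', $\ell_k$. These are positioned so that a point $\zeta_k$ near $W^s_{\rm loc}(p)$ spends exactly $n_k^H$ iterates in $K'$ and then follows the fixed transition of $n_k^T=N$ iterates to land at $\zeta_{k+1}$. The positions of the arcs are determined by $\lambda_s^{n_k^H}$ and $\lambda_u^{-n_k^H}$. We then perform an infinite sequence of small perturbations $f_1\circ\phi_k$, where each $\phi_k$ is a bump supported in a disjoint small disk near the tangency region with $C^r$ size tending to $0$. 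Each $\phi_k$ adjusts the returning fold so that $f^{n_k}(\zeta_k)=\zeta_{k+1}$ exactly and the image of $v^u$ at $\zeta_{k+1}'$ is exactly horizontal, giving the first part of {\bf QT}. The quadratic estimate with uniform $C_T$ follows because each perturbation is a translation that preserves the quadratic coefficient $b$ up to small error. Condition {\bf DH} is automatic since $n_k^T$ is bounded while $n_k^H\to\infty$.

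\emph{Step 3: convergence of perturbations (main obstacle).} The hard step is making $\sum_k\|\phi_k\|_{C^r}$ converge. The required displacement at step $k$ is of order $\lambda_s^{n_k^H}\lambda_u^{\,2n_{k+1}^H}$-type mismatches, while the support radius shrinks like $\lambda_u^{-n_{k+1}^H}$. The $C^r$ norm therefore behaves like displacement$\times\lambda_u^{r n_{k+1}^H}$, and the bound is not automatic. We would follow the Colli--Vargas renormalization. We rescale the return map near each tangency into a H\'enon-like map with parameters converging to a fixed limit. We then choose $n_{k+1}^H\approx \alpha$ or $\beta$ times $n_k^H$ so that the needed adjustments are exponentially small relative to the support scale, and this is where the exponent conditions in \eqref{001} enter. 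Finally we verify that each perturbation does not destroy the earlier returns, since the supports are disjoint and the orbits pass through each support only once, and that the limit map remains in $\mathcal U$ and in the Newhouse domain, which is open. This yields density of $\mathcal D$.
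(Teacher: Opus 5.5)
\textbf{Main gap: strong dissipativity.} The genuine gap is in Step 1, where you obtain a saddle with $\lambda_s\lambda_u^3<1$. Membership in the Newhouse domain only provides an area-contracting saddle, i.e.\ $\lambda_s\lambda_u<1$. Perturbing along a heteroclinic connection does not supply the missing factor $\lambda_u^{2}$. Orbits produced by shadowing and connecting have exponents that are essentially averages of the exponents along the pieces they follow, so they cannot beat the dissipativity ratio you started with.

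The missing idea, which is the content of the paper's reduction lemma, is renormalization at a generically unfolding quadratic tangency. The rescaled returns $\Phi_n^{-1}\circ f_{\mu_n}^{N_0+n}\circ\Phi_n$ converge in $C^r$ to $(\tilde x,\tilde y)\mapsto(\tilde y,\tilde y^2+\tilde\mu)$, whose fixed point $(2,2)$ at $\tilde\mu=-2$ has eigenvalues $4$ and $0$. Because the rescaling is a conjugacy, this gives an honest periodic saddle of a perturbation of $f$ with $\lambda_s\approx 0$ and $\lambda_u\approx 4$, hence $\lambda_s\lambda_u^3<1$. Passing to $(\tilde y,\tilde y^2+\tilde\nu\tilde x+\tilde\mu)$ and using \cite[Proposition 3.2]{kls} then gives this saddle a generically unfolding quadratic homoclinic tangency together with a transverse homoclinic point, and this is transferred back to a perturbation of $f$. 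Without this reduction neither {\bf H} nor \eqref{001} is available.

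\textbf{Second gap: convergence of the perturbations.} Your Steps 2--3 do not amount to an argument. With a bounded transition $n_k^T=N$ and bump perturbations near the tangency, your own heuristic gives the $k$-th bump a $C^r$ size of about $\lambda_s^{n_k^H}\lambda_u^{(r+2)n_{k+1}^H}$. With $n_{k+1}^H\approx\alpha n_k^H$ and $\alpha$ close to $1$, this is roughly $(\lambda_s\lambda_u^{r+2})^{n_k^H}$, which $\lambda_s\lambda_u^3<1$ does not control. Nor is \eqref{001} the remedy: in the paper those inequalities play no role in controlling the perturbations and are consumed only in Sections 3--5.

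In the paper, convergence of the perturbations is inherited unchanged from the critical-chain diffeomorphism $f_{\underline{\boldsymbol t}}$ of \cite[Section 7]{KS2017}. Its returns are routed through the horseshoe, so $n_k^T=N_0+N_1+N_2+\widehat n_k+\widehat i_{k+1}$ with $\widehat n_k,\widehat i_{k+1}=O(k)$, which is unbounded. The only change is to use the freedom in the binary codes to replace the quadratic blocks $1^{(z_kk^2)}$ by blocks of the exponential lengths \eqref{eqn_n^H}. This gives {\bf OE}, and via $O(k)/n_k^H\to0$ it also gives {\bf DH}.

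Likewise, the uniform $C_T$ in {\bf QT} comes from \cite[Lemma 8.2, Remark 7.7, Lemma 8.1(2)]{KS2017}: the relevant curve lies in a leaf of $\mathcal F_{\mathrm{loc}}(\Lambda)$ with uniformly bounded curvature, and $N_2$ is fixed. It does not come from the perturbations being translations that preserve the quadratic coefficient.
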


The proof of Theorem \ref{thm:KS} will be given throughout the rest of this section.
In Sections 3-5, we will prove the following theorem, which, together with Theorem \ref{thm:KS}, implies Theorem \ref{thm:main} immediately.
\begin{thm}\label{thm:main2}
Assume that $f$ satisfies {\bf DH}, {\bf H}, {\bf QT} and {\bf OE}. Then, there exist open sets $U\subset K$ and  $V\subset \mathbb R^2$ such that  
$( \frac{\log ||Df^n(x)v||}{n})_{n\ge 1}$ does not converge
for all $x\in U$ and nonzero $v\in V$.
\end{thm}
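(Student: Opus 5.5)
We would follow the mechanism of \cite{KLNS2022}: find an open set $U$ of points whose orbits shadow the orbit of $\zeta_1$, i.e. alternate between long hyperbolic passages in $K'$ of lengths $n_k^H$ and short tangential returns of lengths $n_k^T$. Then show that $\frac1n\log\|Df^n(x)v\|$ takes two different limiting values along the two subsequences of times $N_{2p}=\sum_{k<2p}n_k$ and $N_{2p+1}=\sum_{k<2p+1}n_k$. The oscillation comes from the alternating growth ratios $\alpha$ and $\beta$ in {\bf OE}.

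\textbf{Step 1: a box $U$ that returns into itself.} By {\bf H} and {\bf QT}, a small rectangle $R_k$ around $\zeta_k$ is mapped by $f^{n_k^H}$, which is linear, to a rectangle stretched by $\lambda_u^{n_k^H}$ vertically and contracted by $\lambda_s^{n_k^H}$ horizontally. The fold $f^{n_k^T}$ then sends vertical segments to parabola-like curves tangent to the horizontal direction, by the quadratic estimate in {\bf QT}. We choose the sizes of the $R_k$ (width $\sim\lambda_u^{-n_k^H}$-type scales) so that $f^{n_k}(R_k)\subset R_{k+1}$; this is a Modified Rectangle Lemma type statement. The condition $\lambda_s\lambda_u^3<1$ and the inequalities in \eqref{001} are used to check that the horizontal contraction beats the quadratic distortion. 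We then set $U=\mathrm{int}\,R_1$. The growth $n_{k+1}^H\approx\alpha n_k^H$ or $\beta n_k^H$ with $\beta<2$ guarantees that the square of the size at the fold stays below the next box size.

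\textbf{Step 2: the derivative along orbits.} On the hyperbolic part, $Df^{n_k^H}=\mathrm{diag}(\lambda_s^{n_k^H},\lambda_u^{n_k^H})$. On the tangential part, $Df^{n_k^T}$ at points of $f^{n_k^H}(R_k)$ is, by {\bf QT}, close to a matrix sending $v^u$ to approximately $v^s$ plus a term of size the distance to the tangency, and it has bounded entries. We write the product
\[
A_k=Df^{n_k^T}\,\mathrm{diag}(\lambda_s^{n_k^H},\lambda_u^{n_k^H}).
\]
Its dominant entry is of order $\lambda_u^{n_k^H}$ in the $(1,2)$ position, which maps vertical to horizontal. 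The off-diagonal corrections are controlled by the distance of $f^{n_k^H}(x)$ to $\zeta_{k+1}'$, which is of order the box size. Inductively, we show that for $v$ in a cone $V$ around a fixed direction, $Df^{N_k}(x)v$ lies in a cone around $v^s$ or $v^u$ (alternately after each half-step). We aim for
\[
\log\|Df^{N_k}(x)v\|=\sum_{j<k}\bigl(c_j\log\lambda_u+d_j\log\lambda_s\bigr)n_j^H+o(N_k),
\]
where the new vertical component after each fold is proportional to the horizontal one times the fold curvature times the displacement. This gives alternating contributions: after folding, the vector is horizontal and gets contracted by $\lambda_s^{n_{k+1}^H}$ until the vertical component seeded by the fold takes over.

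\textbf{Step 3: computing the two limits.} Using $n_k^H$ from {\bf OE}, and $N_k\sim n_k^H/(1-1/\cdot)$ by {\bf DH}, we compute the exponents along even and odd $k$ as explicit rational functions of $\alpha,\beta,\log\lambda_s,\log\lambda_u$. The last inequalities in \eqref{001}, such as $\lambda_u(\lambda_s\lambda_u^3)^{\alpha\beta-1}>1$ and the comparisons of $\lambda_s\lambda_u^{7\alpha/2}$ with $\lambda_s\lambda_u^{(9\beta-6)/(2-\beta)}$, should be exactly what makes the liminf along one parity differ from the limsup along the other. Openness of $U$ and $V$ follows from uniformity of the cone estimates.

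\textbf{Main obstacle.} The main obstacle is Step 2: obtaining lower bounds for the products of the $A_k$ despite the off-diagonal $(1,2)$ terms. Cancellations could destroy growth, so we would prove by induction a cone-invariance statement with explicit lower bounds on the relevant coefficient, using the strong dissipativity to absorb errors.
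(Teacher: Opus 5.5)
Your outline matches the paper's architecture: self-mapping boxes around the $\zeta_k$, return Jacobians $A_k=Df^{n_k^T}\,\mathrm{diag}(\lambda_s^{n_k^H},\lambda_u^{n_k^H})$ with a dominant $(1,2)$ entry, an inductive product estimate, and two parity subsequences. However, the estimate that makes the mechanism work is missing, and Step~2 gets it backwards.

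By \textbf{QT} the $(2,2)$ entry of $Df^{n_k^T}$ vanishes at $\zeta'_{k+1}$. At the actual orbit point it is $O(d_k)$, where $d_k$ is the displacement from $\zeta'_{k+1}$ you mention. In $A_k$ this entry is multiplied by $\lambda_u^{n_k^H}$. The paper proves (Lemma~\ref{lem:5.3}), for a small constant $\xi\in(0,1)$,
\[
d_k\,\lambda_u^{n_k^H}\le \xi\,\lambda_s^{n_k^H},
\]
so this entry is of order $\lambda_s^{n_k^H}$ and negligible against the $(2,1)$ entry. Only then does each return genuinely swap the two directions. The new vertical component is seeded by the $(2,1)$ entry of $Df^{n_k^T}$, which is bounded below by invertibility, not by curvature, acting on the contracted horizontal component. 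This gives $\|Df^{N_m}(x)v\|=\Lambda_u^{(m)}e^{o(N_m)}$ with the alternating product $\Lambda_u^{(m)}=\lambda_u^{n^H_{k_0+m-1}}\lambda_s^{n^H_{k_0+m-2}}\lambda_u^{n^H_{k_0+m-3}}\cdots$.

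The ``curvature times displacement'' term you treat as the seed is exactly the one that must be shown negligible. If it is merely small, the vertical component survives every fold and keeps being expanded, and the alternating structure is lost. Since $d_k$ is controlled by $\lambda_u^{n_k^H}$ times the box height $h_k$, the bound forces $h_k\lesssim\lambda_s^{n_k^H}\lambda_u^{-2n_k^H}$. This must be reconciled with the quadratic-fold requirement $h_{k+1}\gtrsim(\lambda_u^{n_k^H}h_k)^2$ coming from forward invariance. That is why the paper's boxes have height $\frac12\epsilon_{k,m}b_{k+m}$ and width $2\epsilon_{k,m}\sqrt{b_{k+m}}$, with $\epsilon_k$ carrying a factor $\lambda_s^{n_k^H}$ and $b_{k+1}=(\lambda_u^{3n_k^H}b_k)^2$. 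Boxes at ``$\lambda_u^{-n_k^H}$-type scales'' chosen without reference to $\lambda_s$ will not satisfy the bound.

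Relatedly, Step~3 misplaces the role of \eqref{001}. Those inequalities are used up by the box construction and by the $(2,2)$ estimate:
\begin{itemize}
\item $\lambda_u(\lambda_s\lambda_u^3)^{\alpha\beta-1}>1$ gives $\epsilon_{k,m}\le\lambda_u^{n^H_{k+m}}\epsilon_{k,m+1}$, which is needed for $f^{n_{k+m}}(U_{k,m})\subset U_{k,m+1}$;
\item the fifth inequality controls the images of horizontal segments;
\item the comparisons of $\lambda_s\lambda_u^{7\alpha/2}$ with $\lambda_s\lambda_u^{(9\beta-6)/(2-\beta)}$, together with $\frac52\alpha-2-\frac{1}{\alpha\beta}<0$, are what allow $\xi$ to be chosen in the bound above.
\end{itemize}
Once $\|Df^{N_m}(x)v\|=\Lambda_u^{(m)}e^{o(N_m)}$ is known, the two limits are $\frac{\gamma\log\lambda_u+\log\lambda_s}{1+\gamma}$, with $\gamma=\alpha$ along even $m$ and $\gamma=\beta$ along odd $m$. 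They differ simply because $\alpha<\beta$ and this expression is strictly increasing in $\gamma$.

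So the gap is in Step~2, not Step~3. You need to state and prove the $(2,2)$-entry bound, with box sizes designed for it. After that, the inductive control of the products you anticipate goes through: dominant coefficients bounded above and below by factors $e^{\pm o(N_m)}$, and subdominant ones at most a fixed fraction of them.
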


\subsection{Reduction to strongly dissipative Newhouse domain}
Let $p$ be a saddle periodic point of $f$ in $\mathrm{Diff}^r(M)$ and let 
$\lambda_s$ and $\lambda_u$ be 
the stable and unstable eigenvalues of  $Df^m(p)$ respectively, where $m$ is the period of $p$.
We say that $p$ is \emph{strongly dissipative} 
if $\lambda_s\lambda_u^3<1$.
Suppose that $\mathcal{N}^\ast$ is an open subset of $\mathrm{Diff}^r(M)$ 
consisting of elements $f$ which have a strongly dissipative saddle periodic point $p$ 
such that $W^{s}(p)$ and $W^{u}(p)$ have a persistent quadratic homoclinic tangency relative to a dense subset of $\mathcal{N}^\ast$.
We say that $\mathcal{N}^\ast$ is a \emph{strongly dissipative Newhouse domain}.

Let $\mathcal{N}$ be a Newhouse domain of $\mbox{Diff}^r(M)$. To prove Theorem \ref{thm:KS}, for every element $f$ of $\mathcal{N}$, we need to construct an arbitrarily small $C^r$ perturbation $g$ of $f$ such that $g$ satisfies the four conditions {\bf DH}, {\bf H}, {\bf QT} and {\bf OE}. Before that, let us make some reduction. More precisely, the following lemma suggests that by some renormalization process, we can always assume that the saddle satisfies the strongly dissipative condition.  

\begin{lem}
Let $\mathcal{N}$ be a Newhouse domain of $\mathrm{Diff}^r(M)$. Then, there is a dense subset $\mathcal{D}$ of $\mathcal{N}$ such that 
every diffeomorphism in $\mathcal{D}$ 
has homoclinic tangency and transverse intersection for 
a saddle periodic point that satisfies the strongly dissipative condition.
\end{lem}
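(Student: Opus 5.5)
We will prove the lemma by renormalizing near a homoclinic tangency of an area-contracting basic set, and obtaining the strongly dissipative saddle as a periodic orbit of the first-return map close to the tangency. Fix $f\in\mathcal N$ and a $C^r$ neighborhood $\mathcal U$ of $f$. By the definition of the Newhouse domain, $f$ has a hyperbolic basic set $\Lambda$ with a $C^r$ robust homoclinic tangency and an area-contracting periodic point. The first step is to arrange that the tangency involves a dissipative saddle. Following Newhouse's argument (and the standard fact that in the Newhouse domain a dissipative saddle is homoclinically related to $\Lambda$ after perturbation), we perturb within $\mathcal U$ to $f_1$ with a saddle $q\in\Lambda$ of period $m$ satisfying $|\det Df_1^m(q)|<1$. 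Robustness of the tangency for $\Lambda$, together with density of $W^{s}(q)$ and $W^u(q)$ in the stable and unstable laminations, lets us perturb further so that $W^s(q)$ and $W^u(q)$ have a quadratic homoclinic tangency. The transverse intersection is automatic because $q$ lies in the nontrivial basic set $\Lambda$.

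The second step is renormalization. After a small $C^r$ perturbation (using $r\ge2$), we may linearize $f_1^m$ near $q$ as $(x,y)\mapsto(\sigma x,\lambda y)$ with $|\sigma\lambda|<1$, possibly only $C^r$ with a nonresonance condition, or at least $C^{1,1}$ with an approximate form. By the Palis--Takens renormalization \cite{PT1995}, the return maps $f_1^{N+mn}$ near the tangency, in suitably rescaled coordinates, converge in $C^r$ to the Hénon-like family $(x,y)\mapsto(y,\,a-y^2)$ as $n\to\infty$ after unfolding the tangency by a parameter. For $a$ slightly larger than $2$, the limit map has a horseshoe. It also has a saddle fixed point $P_a$ with eigenvalues $\mu_s,\mu_u$ satisfying $\mu_s\to0$ in the limit, since the limit is degenerate with Jacobian $0$. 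For large $n$ the actual return map has Jacobian of order $(\sigma\lambda)^n\to0$, while its unstable eigenvalue stays bounded by a constant $C(a)$. Hence the continuation $p_n$ of $P_a$, a saddle periodic point of a perturbation $f_2\in\mathcal U$, satisfies
\[
\lambda_s\lambda_u^3=\det\cdot\lambda_u^2\le C\,|\sigma\lambda|^{n}C(a)^2<1
\]
for $n$ large. Thus $p_n$ is strongly dissipative.

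The third step is to create a homoclinic tangency and a transverse intersection for $p_n$. For the limit Hénon map at $a$ near $2$, the fixed saddle has transverse homoclinic points. As the parameter varies, namely at the first bifurcation parameter $a=2$ or near it, the stable and unstable manifolds of the fixed saddle have a quadratic tangency. Both properties are $C^r$-open or codimension-one in the family, so they pass to the return map for large $n$ after adjusting the unfolding parameter, which corresponds to a small perturbation of $f_1$. This yields $g\in\mathcal U$ with a strongly dissipative saddle having both a homoclinic tangency and a transverse homoclinic point. Since $\mathcal U$ was arbitrary, the set $\mathcal D$ of such $g$ is dense in $\mathcal N$.

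The main obstacle is the first step: guaranteeing that the tangency can be transferred to a saddle with $|\det|<1$. The definition only gives an area-contracting periodic point somewhere in $\Lambda$. I would handle it using the homoclinic relation within $\Lambda$ and Newhouse's thickness argument, perturbing so that the tangency is between leaves of $W^s(q)$ and $W^u(q)$. A further technical point is keeping the $C^r$ convergence of the renormalization when $r$ is non-integer. Here I would cite \cite{PT1995} and the version in \cite{KS2017}.
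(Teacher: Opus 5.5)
Your proposal is correct and follows essentially the same route as the paper: reduce to a quadratic tangency of a dissipative saddle (the paper simply assumes this without loss of generality, while you justify it through the homoclinic relation inside $\Lambda$). Then you apply the Palis--Takens renormalization to obtain H\'enon-like return maps whose continued fixed saddle, with eigenvalues near $0$ and $4$, is strongly dissipative, and you produce a transverse homoclinic point and a quadratic tangency near $a=2$ (the paper's $\tilde\mu=-2$) that are transferred back to the original diffeomorphism. The only real difference is that the paper passes through the genuine two-parameter H\'enon family $(\tilde y,\tilde y^2+\tilde\nu\tilde x+\tilde\mu)$ and cites a known result for the tangency and transverse intersection there, whereas you argue directly on the degenerate limit map via openness and a codimension-one (generic unfolding) argument.
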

\begin{proof}
Let $f$ be any element of $\mathcal{N}$, by an arbitrarily small $C^r$ perturbation (still denoted by $f$ for notational simplicity) if necessary, one can assume that $f$ has a homoclinic tangency, denoted by $q$, associated to some saddle periodic point, denoted by $p$. 
No generality is lost by assuming that $f$ is of $C^{\infty}$ class,  
the saddle $p$ is a dissipative (i.e., the product of the modulus of eigenvalues $Df(p)$ is smaller than 1) saddle fixed point
 and the tangency point $q$ is quadratic.
We now consider a one-parameter family $(f_{\mu})$ of $C^{\infty}$ diffeomorphisms with $f_{0}=f$ and such that the  tangency $q$ unfolds generically for $\mu=0$. 
By \cite[\S 3.4, Theorem 1]{PT1995}, we have a 
renormalization of return maps near $q$ in the following sense:
There is an integer $N_{0}>0$ such that, for any sufficiently large integer $n>0$, 
there exist a $C^{r}$ reparametrization  $\Theta_{n}:\mathbb{R}\to \mathbb{R}$ and a $\tilde \mu$-dependent $C^r$ coordinate change $\Phi_{n}:\mathbb{R}^2\to M$ satisfying the following conditions.
\begin{itemize}
\item
$\Theta_{n}'(\tilde \mu)>0$.
\item
For any $(\tilde\mu,\tilde x,\tilde y)\in \mathbb{R}\times \mathbb{R}^2$,
$(\Theta_n(\tilde \mu), \Phi_n(\tilde x,\tilde y))$ converges to $(0, q)$ as $n\rightarrow \infty$.
\item
For any $\tilde \mu\in \mathbb{R}$, the diffeomorphisms $\varphi_n$ 
on $\mathbb{R}^2$ defined by
\[
\varphi_{n}(\tilde{x},\tilde{y})= 
  \Phi_{n}^{-1}\circ f_{\mu_{n}}^{N_{0}+n}\circ \Phi_{n}(\tilde{x},\tilde{y}),\ \text{where}\ \mu_{n}:=\Theta_{n}(\tilde{\mu}),
\]
converge to the H\'enon-like map 
\[
\varphi_{\tilde{\mu}}(\tilde{x},\tilde{y})=( \tilde{y}, \tilde{y}^{2}+\tilde{\mu} ).
\]
as $n\rightarrow \infty$  in the $C^r$ topology.
\end{itemize}
Note that $\varphi_{-2}$ has a fixed point $P_{-2}=(2,2)$ 
and $D\varphi_{-2}(P_{-2})$ has the eigenvalues 
$\lambda_{u}=4$ and $\lambda_{s}=0$. 

We next consider the 2-parameter family of diffeomorphisms defined as
$$\varphi_{\tilde\mu,\tilde\nu}(\tilde{x},\tilde{y})=( \tilde{y}, \tilde{y}^{2}+\tilde{\nu} \tilde{x}+\tilde{\mu}), 
$$ 
which $C^{2}$-converges to $\varphi_{\tilde{\mu}}$ as $\tilde\nu\to 0$.\footnote{Note that $\varphi_{\mu,\nu}$ is 
topologically conjugate to the original H\'enon map $(x,y)\mapsto (1-ax^2+y, bx)$ via 
appropriate parameter and coordinate changes. } 
Thus, if $(\tilde\mu,\tilde\nu)$ is near $(-2,0)$,
$\varphi_{\tilde\mu,\tilde\nu}$ has the continuation of $P_{-2}$ given by
\[P_{\tilde\mu,\tilde\nu}=(y_{\tilde\mu,\tilde\nu}, y_{\tilde\mu,\tilde\nu}),\ \text{where}\ 
y_{\tilde\mu,\tilde\nu}=\frac{1-\tilde\nu+\sqrt{(1-\tilde\nu)^{2}-4\tilde\mu}}{2},\] 
and $D\varphi_{\tilde\mu,\tilde\nu}(P_{\tilde\mu,\tilde\nu})$ has the eigenvalues 
\[
\lambda_{s; \tilde\mu,\tilde\nu}=y_{\tilde\mu,\tilde\nu}-\sqrt{y_{\tilde\mu,\tilde\nu}^{2}+\tilde\nu},\ 
\lambda_{u; \tilde\mu,\tilde\nu}=y_{\tilde\mu,\tilde\nu}+\sqrt{y_{\tilde\mu,\tilde\nu}^{2}+\tilde\nu}.
\]
Since
the former is close to $0$ and the latter is close to $4$, as $(\tilde\mu,\tilde\nu)$ is contained in a small neighborhood of  $(-2,0)$, 
$P_{\tilde\mu,\tilde\nu}$ satisfies the strongly dissipative condition: 
\[\lambda_{s; \tilde\mu,\tilde\nu} \lambda^3_{u; \tilde\mu,\tilde\nu}<1.\]

Moreover, for any $\tilde\nu$ close to $0$, there exists
$\tilde\mu=\tilde\mu(\tilde\nu)$ near $-2$ such that
$W^{s}(P_{\tilde\mu,\tilde\nu})$ and $W^{u}(P_{\tilde\mu,\tilde\nu})$
have a transverse intersection and a quadratic tangency
$q_{\tilde\mu,\tilde\nu}$, and the latter unfolds generically in the
$\tilde\mu$-parameter family for fixed $\tilde\nu$.
See \cite[Proposition 3.2]{kls}.
Finally, all these properties are transferred to the original family
by the above renormalization.
\end{proof}

\subsection{Modification of diffeomorphisms for Theorem \ref{thm:KS}}
For any $f\in \mathcal{N}$, we construct a diffeomorphism arbitrarily $C^r$-close to $f$ and satisfying {\bf DH}, {\bf H}, {\bf QT} and {\bf OE} by 
modifying the diffeomorphism $f_{\underline{\boldsymbol{t}}}$ 
given in \cite[Section 7]{KS2017}.
In fact the $f_{\underline{\boldsymbol{t}}}$ already satisfies the former three conditions but does not satisfy {\bf OE}.
The condition {\bf OE} requires that the sequence $(n_k^H)_{k\geq 1}$ has exponential growth.
On the other hand,  
the corresponding sequence in \cite{KS2017} only has quadratic polynomial growth.
So we need to modify $f_{\underline{\boldsymbol{t}}}$ so as to satisfy {\bf OE}.

Let $a$ be a strongly dissipative saddle fixed point of $f$ with a persistent quadratic homoclinic tangency of 
period $m$.
We may assume that $a$ is a fixed point and all the eigenvalues of $Df(a)$ are positive if necessary by considering  
$f^{2m}$ instead of $f$.
Since $a$ is strongly dissipative, $Df(a)$ has the eigenvalues 
$\lambda$, $\sigma$ with $0<\lambda<1<\sigma$ and $\lambda\sigma^3<1$.
Moreover, one can suppose that $f$ satisfies the Sternberg open-dense condition \cite{st} concerning the eigenvalues by slightly modifying $f$ if needed.
Thus $f$ is $C^r$-linearizable in a neighborhood $K$ of $a$, that is, 
one can choose a $C^r$-coordinate on $K$ with respect to which $f$ is represented as $f(x,y)=(\lambda x,\sigma y)$ for any $(x,y)\in K$, where the origin $(0,0)$ corresponds to $a$. 
Then \[\lambda_{s}=\lambda\quad
\text{and}\quad \lambda_{u}=\sigma\]
satisfy the former part of {\bf H}.
Fix a nonempty closed set $K' \subset K$ including $a$.
Then, the latter part of {\bf H} comes from \cite[Section 7]{KS2017}: the block $1^{(z_k k^2)}$ in the critical-chain itineraries forces the orbit to stay for a long time in a fixed small linearizing neighborhood of the saddle fixed point $a$.

We next verify {\bf QT}. For each $k\ge1$, let
$
\widetilde{\boldsymbol{x}}_{k+1}'
=
f_{\underline{\boldsymbol{t}}}^{n_k^H}(\widetilde{\boldsymbol{x}}_k).
$
Then \cite[Lemma~8.2]{KS2017} gives
\[
Df_{\underline{\boldsymbol{t}}}^{n_k^T}(\widetilde{\boldsymbol{x}}_{k+1}')v^u\in \mathbb R v^s.
\]
Moreover, by \cite[Remark~7.7]{KS2017}, the curve $\widetilde J_k$ appearing in the proof of \cite[Lemma~8.2]{KS2017} belongs to a leaf of $\mathcal F_{\mathrm{loc}}(\Lambda)$, and hence its curvature is bounded by a constant independent of $k$.
Since $g_k(J_k)=f^{N_2}(\widetilde J_k)$ and $N_2$ is independent of $k$, \cite[Lemma~8.1(2)]{KS2017} yields, after shrinking $K$ if necessary, a constant $C_{T}>0$ independent of $k$ such that for every line segment
$
\gamma\subset \ell^u(\widetilde{\boldsymbol{x}}_{k+1}')\cap K
$
containing $\widetilde{\boldsymbol{x}}_{k+1}'$,
\[
\bigl|\pi^u_{k+1}(f_{\underline{\boldsymbol{t}}}^{n_k^T}(\gamma))\bigr|
\le
C_{T}
\bigl|\pi^s_{k+1}(f_{\underline{\boldsymbol{t}}}^{n_k^T}(\gamma))\bigr|^2,
\]
where $\pi^\sigma_{k+1}$ denotes the orthogonal projection onto the line through $\widetilde{\boldsymbol{x}}_{k+1}$ in the $v^\sigma$-direction.
Therefore $f_{\underline{\boldsymbol{t}}}$ satisfies {\bf QT}.

Fix $\alpha,\beta$
satisfying \eqref{001} (see Remark \ref{rmk:0307}).
Take 
a sufficiently large integer $n_0^H\geq 2$ 
such that 
$$\lambda\sigma^{\frac{9\beta-6+18(n_0^H)^{-1}}{2-\beta}}<1.
$$
Here we consider the two sequences defined by
\begin{equation}\label{eqn_n^H}
n_{2p}^H=\lfloor n_0^H\alpha^p\beta^p\rfloor\quad\text{and}\quad n_{2p+1}^H=\lfloor n_0^H\alpha^{p+1}\beta^p\rfloor,
\end{equation}
where $\lfloor x\rfloor$ is the greatest integer not exceeding a real number $x$.
One can take $n_0^H$ so that $n_k^H>1$ for any $k\in \mathbb{N}$.
Hence, 
we obtain {\bf OE}.

Let $f_{\underline{\boldsymbol{t}}}$ be a diffeomorphism arbitrarily $C^r$-close to $f$ and satisfying the conditions given in \cite{KS2017}.
For the positive integers $N_0$, $N_1$, $N_2$, $\widehat n_k=O(k)$, $\widehat i_{k+1}=O(k)$ 
defined in \cite{KS2017}, we set 
\begin{equation}\label{eqn_n^T}
n_k^T=N_0+N_1+N_2+\widehat n_k+\widehat i_{k+1}.
\end{equation}
Then one can choose a sequence $(\widetilde{\boldsymbol{x}}_k)_{k\geq 1}$ in $K$ corresponding to $(\zeta_k)_{k\geq 1}$ in {\bf DH} 
with $f_{\underline{\boldsymbol{t}}}^{n_k}(\widetilde{\boldsymbol{x}}_k)= \widetilde{\boldsymbol{x}}_{k+1}$, 
where $n_k=n_k^H+n_k^T$. 
The advantage of $f_{\underline{\boldsymbol{t}}}$ is the freedom in choosing the binary code associated 
with $\widetilde{\boldsymbol{x}}_k$.
In particular, $(\widetilde{\boldsymbol{x}}_k)_{k\geq 1}$ can be taken so that 
\begin{equation}\label{eq:251012}
f_{\underline{\boldsymbol{t}}}^j(\widetilde{\boldsymbol{x}}_k)\in K\quad \text{for every $j=0,\dots,n_k^H$}
\end{equation}
if necessary replacing the perturbation sequence $\underline{\boldsymbol{t}}$ by 
a certain sequence arbitrarily close to $\underline{\boldsymbol{t}}$ (see \cite[Equation (7.19)]{KS2017}).
By \eqref{eqn_n^H} and \eqref{eqn_n^T}, we have $\lim\limits_{k\to \infty}\dfrac{n_k^H}{n_k}=1$, 
which implies {\bf DH}.

Table \ref{tb_comparison} presents the correspondence between the notations used in 
{\bf DH}, {\bf H}, {\bf QT}, {\bf OE} and those in the modified $f_{\underline{\boldsymbol{t}}}$.
\setlength{\tabcolsep}{13pt}
\renewcommand{\arraystretch}{1.4}

\begin{table}[H]
\caption{Comparison of notations}
\begin{center}
\begin{tabular}{|c|c|}\hline
in {\bf DH}, {\bf H}, {\bf QT}, {\bf OE} & in the modified $f_{\underline{\boldsymbol{t}}}$\\ \hline
$\zeta_k$  &  $\widetilde{\boldsymbol{x}}_k$ \\ \hline
$n_k^H$  &  \eqref{eqn_n^H}\\ \hline
$n_k^T$  &  \eqref{eqn_n^T}\\ \hline
 $\lambda_{s}$  &  $\lambda$ \\ \hline
 $\lambda_{u}$ &  $\sigma$ \\ \hline
\end{tabular}
\label{tb_comparison}
\end{center}
\end{table}

\begin{remark}\label{rmk:1012}
In Sections \ref{Sec3}-\ref{s:5}, we construct the open set $U$ in Theorem \ref{thm:main2} by using the rectangles $U_{k,m}\subset K$ introduced in Section \ref{Sec3}, each centered at $\zeta_{k+m}=\widetilde{\boldsymbol{x}}_{k+m}$. 
Specifically, we will set $U=U_{k_0,0}$ with some positive integer $k_0$.
Moreover, we show that $f^{n_{k+m}}(U_{k,m})\subset U_{k,m+1}$; from the proof of this claim (in Section \ref{Sec3}), one readily verifies that $f^j(U_{k,m})$ is contained in the neighborhood $K$ of the saddle point $a$ for any $j=0,\ldots ,n_{k+m}^H$, according to \eqref{eq:251012}.
Therefore, due to {\textbf{DH}}, one can show that the Birkhoff time average of any continuous function $\varphi:M\to\mathbb R$
exists for every
$x\in U$:
\[
\lim_{n\to\infty}\frac{1}{n}\sum_{j=0}^{n-1}\varphi (f^j(x)) = \varphi (a).
\]
(In particular, the Dirac measure $\delta_a$ is the so-called Dirac physical measure of saddle type.)
See \cite[Section 4.4]{KLNS2022} for further details.
\end{remark}

\section{Modified rectangle lemma}\label{Sec3}
As explained in Section~\ref{Sec2}, the construction in \cite{KS2017} provides the geometric ingredients underlying {\bf DH}, {\bf H}, and {\bf QT} in the Newhouse domain. It also yields a sequence of mutually disjoint rectangles $(R_k)_{k\ge k_0}$ such that the center of $R_k$ is $\zeta_k$ and $f^{n_k}(R_k) \subset R_{k+1}$ (called Rectangle Lemma there).
A similar family of rectangles satisfying the same forward inclusion also appeared in \cite{KLNS2022} for a more concrete class of surface diffeomorphisms, and in that setting smaller-scale sets $(U_{k,m})_{m\ge0}$ were built inside those rectangles as an important step to show the non-existence of Lyapunov exponents.
In the present situation, however, the rectangles $R_k$ from \cite{KS2017} are not located in the neighborhood $K$ where the linearizing coordinate is available. Since this makes the calculations in Sections~\ref{s:5} and \ref{s:6} prohibitively difficult, we instead construct a family of sets $(U_{k,m})_{m\ge0}$ in a different region adapted to the linearizing coordinate.

Suppose that $f$ satisfies {\bf DH}, {\bf H}, {\bf QT} and {\bf OE} with the same notations in Section~\ref{Sec2}.
In this section, for every sufficiently large positive integer $k$ and every non-negative integer $m$, we aim to define a rectangle $U_{k,m}$ satisfying $f^{n_{k+m}}(U_{k,m})\subset U_{k,m+1}$.

\subsection{Construction of $U_{k,m}$}
According to the third inequality of \eqref{001} in {\bf OE}, we are allowed to fix $k_0\in\mathbb{N}$ large enough such that
\begin{equation}\label{002}
\lambda_s\lambda_u^{\frac{9\beta-6+18(n_{k_0}^H)^{-1}}{2-\beta}}<1.
\end{equation}
Note that \eqref{002} also holds for every subscript greater than $k_0$. For every integer $k\ge k_0$, let
\begin{equation}\label{003}
b_k=\lambda_u^{-3\sum_{i=0}^\infty \frac{n_{k+i}^H}{2^{i}}}
\quad\mbox{and}\quad
\epsilon_k=\Big(\lambda_s\lambda_u^{\frac{9\beta-6+18(n_k^H)^{-1}}{2-\beta}}\Big)^{n_k^H}.
\end{equation}
Thus, by \eqref{002} we have $\epsilon_k\to 0$ when $k\to +\infty$. Moreover, by direct calculation, we have
\begin{equation}\label{006}
\lambda_u^{3n_k^H}b_{k}=\lambda_u^{-3\sum_{i=1}^\infty \frac{n^H_{k+i}}{2^{i}}}=\sqrt{b_{k+1}}.
\end{equation}
Finally, for every non-negative integer $m$, let us define $\epsilon_{k,m}$ as 
\begin{equation}\label{004}
\epsilon_{k,m}=\epsilon_k^{(\alpha\beta)^{\lfloor m/2 \rfloor}}.
\end{equation}
Note that $\epsilon_{k,m} \neq \epsilon_{k+m}$ in general. 
These quantities $b_{k+m}$ and $\epsilon_{k,m}$ will be used to construct the desired rectangle $U_{k,m}$. Before doing so, we state the following claim, which will play a key role in the next subsection.

\begin{clm}\label{007} For every sufficiently large integer $k$ and every non-negative integer $m$, it holds that
\[\epsilon_{k,m}\le\lambda_u^{n_{k+m}^H}\epsilon_{k,m+1}.\]
\end{clm}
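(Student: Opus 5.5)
The plan is to take logarithms and use the closed forms \eqref{003}, \eqref{004}. This reduces Claim~\ref{007} to a linear inequality between $n_k^H$ and $n_{k+m}^H$. That inequality then follows from the fourth inequality of \eqref{001} together with the exponential form of $n_k^H$ in {\bf OE}.

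Write $E_k=\lambda_s\lambda_u^{c_k}$ with $c_k=\frac{9\beta-6+18(n_k^H)^{-1}}{2-\beta}$, so that $\epsilon_k=E_k^{n_k^H}$. By \eqref{002} we have $L_k:=-\log\epsilon_k=n_k^H(-\log E_k)>0$. Then $\log\epsilon_{k,m}=-(\alpha\beta)^{\lfloor m/2\rfloor}L_k$, and the claim is equivalent to
\[
\bigl((\alpha\beta)^{\lfloor (m+1)/2\rfloor}-(\alpha\beta)^{\lfloor m/2\rfloor}\bigr)L_k\le n^H_{k+m}\log\lambda_u .
\]

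I split into two cases according to the parity of $m$.

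\emph{Case $m=2q$ even.} Both floors equal $q$, so $\epsilon_{k,m}=\epsilon_{k,m+1}$. The claim is then trivial because $\lambda_u^{n^H_{k+m}}\ge1$.

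\emph{Case $m=2q+1$ odd.} Here we need $(\alpha\beta)^q(\alpha\beta-1)\,n_k^H(-\log E_k)\le n^H_{k+2q+1}\log\lambda_u$.

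First, compare $n^H_{k+2q+1}$ with $n_k^H$ using {\bf OE}. If $k=2p$, then $n^H_{k+2q+1}=\lfloor n_0^H\alpha^{p+q+1}\beta^{p+q}\rfloor$. If $k=2p+1$, then $n^H_{k+2q+1}=\lfloor n_0^H\alpha^{p+q+1}\beta^{p+q+1}\rfloor$. In either case, since $n_k^H\le n_0^H\alpha^{\cdot}\beta^{\cdot}$ (dropping the floor) and $\alpha<\beta$, we get
\[
n^H_{k+2q+1}\ \ge\ \alpha(\alpha\beta)^q n_k^H-1 .
\]

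Second, bound $-\log E_k$ uniformly in $k$. Since $\beta\ge1$, we have $\frac{9\beta-6}{2-\beta}\ge3$, so $c_k\ge3$. Hence $-\log E_k\le -\log(\lambda_s\lambda_u^3)$. The fourth inequality of \eqref{001}, $\lambda_u(\lambda_s\lambda_u^3)^{\alpha\beta-1}>1$, says $(\alpha\beta-1)(-\log(\lambda_s\lambda_u^3))<\log\lambda_u$. Since $\alpha>1$, we obtain a $k$-independent gap
\[
\delta:=\alpha\log\lambda_u-(\alpha\beta-1)\bigl(-\log(\lambda_s\lambda_u^3)\bigr)>0 .
\]

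It then suffices to have $(\alpha\beta)^q n_k^H\,\delta\ge\log\lambda_u$, which absorbs the $-1$ coming from the floor. This holds for all $q\ge0$ once $n_k^H\ge\log\lambda_u/\delta$, that is, for every sufficiently large $k$, since $n_k^H\to\infty$.

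The only delicate points are bookkeeping. One is handling the floor functions in the comparison of $n^H_{k+2q+1}$ with $n_k^H$, in both parities of $k$. The other is making sure the bound on $-\log E_k$ does not depend on $k$; the inequality $c_k\ge3$ handles this. Neither should pose real difficulty. The main conceptual input is simply recognizing that the fourth condition of \eqref{001} is exactly what controls the odd steps.
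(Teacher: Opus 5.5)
Your proof is correct and follows the paper's argument essentially step for step. Both treat the even case as trivial and bound $n^H_{k+2t+1}>\alpha(\alpha\beta)^t n_k^H-1$ in both parities of $k$. Both then replace the exponent of $\lambda_u$ in $\epsilon_k$ by $3$ and use $\alpha>1$, so that the fourth inequality of \eqref{001} (your $\delta>0$) absorbs the $\lambda_u^{-1}$ loss from the floor uniformly in $t$ once $n_k^H$ is large. The only difference is that you work additively with logarithms rather than multiplicatively, which is cosmetic.
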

\begin{proof}
By the definition of $\epsilon_{k,m}$ in \eqref{004}, it is equivalent to show that
\[
\epsilon_k^{(\alpha\beta)^{\lfloor m/2\rfloor}}
\le
\lambda_u^{n_{k+m}^H}\epsilon_k^{(\alpha\beta)^{\lfloor (m+1)/2\rfloor}}.
\]
If $m$ is even, then $\lfloor m/2\rfloor=\lfloor (m+1)/2\rfloor$, and there is nothing to prove since $\lambda_u>1$.
Thus, we only need to consider the case where $m$ is odd, say $m=2t+1$ with $t\ge0$.
Then it is enough to show that
\begin{equation}\label{100}
\lambda_u^{n_{k+2t+1}^H}\epsilon_k^{(\alpha\beta)^t(\alpha\beta-1)}\ge 1
\end{equation}
for every sufficiently large $k$ and every $t\in\mathbb{N}$.

It holds that
\begin{equation}\label{eq:Sec3-Claim007-lower}
n_{k+2t+1}^H>\alpha(\alpha\beta)^t n_k^H-1
\end{equation}
for every $k\ge1$ and every $t\ge0$.
Indeed, if $k=2q$ is even, then by {\bf OE},
\[
n_{k+2t+1}^H=\lfloor n_0^H\alpha^{q+t+1}\beta^{q+t}\rfloor
>
\alpha^{t+1}\beta^t n_k^H-1
=
\alpha(\alpha\beta)^t n_k^H-1.
\]
If $k=2q+1$ is odd, then
\[
n_{k+2t+1}^H=\lfloor n_0^H\alpha^{q+t+1}\beta^{q+t+1}\rfloor
>
\beta(\alpha\beta)^t n_k^H-1
>
\alpha(\alpha\beta)^t n_k^H-1,
\]
because $\beta>\alpha$.
This proves \eqref{eq:Sec3-Claim007-lower}.

Using \eqref{eq:Sec3-Claim007-lower}, we obtain
\begin{align}\label{103}
\begin{split}
\lambda_u^{n_{k+2t+1}^H}\epsilon_k^{(\alpha\beta)^t(\alpha\beta-1)}
&>
\lambda_u^{-1}\lambda_u^{\alpha(\alpha\beta)^t n_k^H}
\Big(\lambda_s\lambda_u^{\frac{9\beta-6+18(n_k^H)^{-1}}{2-\beta}}\Big)^{n_k^H(\alpha\beta)^t(\alpha\beta-1)}\\
&=
\lambda_u^{-1}
\Big(
\lambda_u^\alpha
\Big(\lambda_s\lambda_u^{\frac{9\beta-6+18(n_k^H)^{-1}}{2-\beta}}\Big)^{\alpha\beta-1}
\Big)^{(\alpha\beta)^t n_k^H}\\
&\ge
\lambda_u^{-1}
\Big(\lambda_u(\lambda_s\lambda_u^3)^{\alpha\beta-1}\Big)^{(\alpha\beta)^t n_k^H},
\end{split}
\end{align}
where in the last inequality we used $\alpha>1$ and
\[
\frac{9\beta-6+18(n_k^H)^{-1}}{2-\beta}>3.
\]
By the fourth inequality of \eqref{001}, the base in the last line of \eqref{103} is greater than $1$.
Hence the right-hand side of \eqref{103} is greater than $1$ for every sufficiently large $k$, uniformly in $t$.
Thus \eqref{100} holds, and the proof is complete.
\end{proof}

Now, let us construct the rectangles $U_{k,m}$. Recall that by condition {\bf H}, we know that $\zeta_{k+m}$ is contained in an open set $K$ where a $C^r$ coordinate chart was defined.
Let $\zeta_{k+m}=(\zeta^s_{k+m}, \zeta^u_{k+m})$ and define
\begin{align*}
\ell^s_{k+m}&:=\Big[\zeta^s_{k+m}-\epsilon_{k,m}\sqrt{b_{k+m}}, \quad  \zeta^s_{k+m}+\epsilon_{k,m}\sqrt{b_{k+m}}\Big]\times\{\zeta^u_{k+m}\},
\\
\ell^u_{k+m}&:=\{\zeta^s_{k+m}\}\times\Big[\zeta^u_{k+m}-\frac{1}{4}\epsilon_{k,m}b_{k+m}, \quad  \zeta^u_{k+m}+\frac{1}{4}\epsilon_{k,m}b_{k+m}\Big].
\end{align*}
Obviously, $\ell^s_{k+m}$ and $\ell^u_{k+m}$ are horizontal and vertical line segments parallel to the coordinate axes. Now, we define the rectangle $U_{k,m}$ as
\begin{align*}
U_{k,m}:=&\Big[\zeta^s_{k+m}-\epsilon_{k,m}\sqrt{b_{k+m}}, \quad  \zeta^s_{k+m}+\epsilon_{k,m}\sqrt{b_{k+m}}\Big]\\
&\times \Big[\zeta^u_{k+m}-\frac{1}{4}\epsilon_{k,m}b_{k+m}, \quad  \zeta^u_{k+m}+\frac{1}{4}\epsilon_{k,m}b_{k+m}\Big], 
\end{align*}
See Figure \ref{fig} below. For simplicity, we call $2\epsilon_{k,m}\sqrt{b_{k+m}}$ the \emph{width} of $U_{k,m}$ and $\frac{1}{2}\epsilon_{k,m}b_{k+m}$ the \emph{height} of $U_{k,m}$.

In addition, for every $x=(x^s,x^u)\in \ell_{k+m}^u$, let 
\[\ell^s_{k+m}(x)=\Big[\zeta^s_{k+m}-\epsilon_{k,m}\sqrt{b_{k+m}}, \quad  \zeta^s_{k+m}+\epsilon_{k,m}\sqrt{b_{k+m}}\Big]\times\{x^u\}.\]
Thus, it follows immediately that
\[
U_{k,m}=\bigcup_{x\in \ell_{k+m}^u}\ell _{k+m}^s(x).
\]

\begin{lem}\label{eq:Sec3-Usubset}
For every sufficiently large integer $k$ and every non-negative integer $m$, it holds that
\begin{equation*}
U_{k,m}\subset \bigcap_{j=0}^{n_{k+m}^H}f^{-j}(K)
\end{equation*}
\end{lem}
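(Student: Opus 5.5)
We will show that each orbit segment $f^j(U_{k,m})$, $0\le j\le n_{k+m}^H$, stays inside a thin neighbourhood of the orbit segment $f^j(\zeta_{k+m})\subset K'$, using the linear form of $f$ in $K$.

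\textbf{Setup.} By condition \textbf{H}, the points $f^j(\zeta_{k+m})$ lie in the closed set $K'\subset K$ for $0\le j\le n_{k+m}^H$. We use the following.
\begin{itemize}
\item[(i)] We may assume $K'$ is compact; it is contained in the chart $K$, or else we shrink to the relevant compact part of the chart. Then there is $\delta>0$ such that the coordinate $\delta$-neighbourhood of $K'$, namely the set of points $(x,y)$ with $|x-x'|\le\delta$ and $|y-y'|\le\delta$ for some $(x',y')\in K'$, is contained in $K$.
\item[(ii)] On $K$ we have $f(x,y)=(\lambda_s x,\lambda_u y)$. Hence, as long as the image stays in $K$, $f^j$ maps an axis-parallel rectangle of half-width $w$ and half-height $h$ centred at $z$ onto the rectangle of half-width $\lambda_s^j w$ and half-height $\lambda_u^j h$ centred at $f^j(z)$.
\end{itemize}

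\textbf{Induction.} We argue by induction on $j$ for $0\le j\le n^H_{k+m}$, with the hypothesis that $f^i(U_{k,m})\subset K$ for $i<j$. Under this hypothesis, (ii) gives that $f^j(U_{k,m})$ is the rectangle centred at $f^j(\zeta_{k+m})\in K'$ with half-width at most $\epsilon_{k,m}\sqrt{b_{k+m}}\le\delta$ and half-height $\tfrac14\lambda_u^j\epsilon_{k,m}b_{k+m}$. It therefore suffices to prove the uniform bound
\[
\tfrac14\lambda_u^{n_{k+m}^H}\epsilon_{k,m}b_{k+m}\le\delta
\]
for all large $k$ and all $m\ge 0$. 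Given this, $f^j(U_{k,m})$ lies in the $\delta$-neighbourhood of $K'$, hence in $K$, which closes the induction.

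\textbf{The estimate.} This step is the only real content. By \eqref{006} applied with index $k+m$,
\[
\lambda_u^{n_{k+m}^H}b_{k+m}\le\lambda_u^{3n_{k+m}^H}b_{k+m}=\sqrt{b_{k+m+1}}\le 1.
\]
Moreover $\epsilon_{k,m}=\epsilon_k^{(\alpha\beta)^{\lfloor m/2\rfloor}}\le\epsilon_k$, because $\epsilon_k<1$ and $(\alpha\beta)^{\lfloor m/2\rfloor}\ge1$; here $\epsilon_k<1$ by \eqref{002}. Since $\epsilon_k\to0$, both
\[
\tfrac14\lambda_u^{n_{k+m}^H}\epsilon_{k,m}b_{k+m}\le\tfrac14\epsilon_k
\quad\text{and}\quad
\epsilon_{k,m}\sqrt{b_{k+m}}\le\epsilon_k
\]
are at most $\delta$ once $k$ is large, uniformly in $m$. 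This proves the lemma.

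The possible obstacle is point (i): the existence of a uniform margin $\delta$ between $K'$ and the boundary of $K$. This requires $K'$ to be compact inside the open chart domain $K$. The hypothesis in \textbf{H} states only that $K'$ is closed. I would justify compactness as follows: $K'$ can be taken as a closed neighbourhood of the saddle $a$ inside the linearizing chart, as in Section~\ref{Sec2}. If necessary, replace $K$ by a slightly smaller open set that still contains the compact set $K'$.
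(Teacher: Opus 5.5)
Your proof is correct and is essentially the paper's argument: induction on $j$ using the linear form of $f$ on $K$, the bound $\lambda_u^{n^H_{k+m}}b_{k+m}\le\sqrt{b_{k+m+1}}<1$ from \eqref{006}, the inequality $\epsilon_{k,m}\le\epsilon_k\to0$, and a uniform margin between $K'$ and the complement of $K$ (the paper uses $\rho=\mathrm{dist}(K',M\setminus K)$ and tracks single points rather than whole rectangles). The compactness of $K'$ that you flag as a possible obstacle is automatic, since $K'$ is closed in the closed surface $M$, so you do not need to modify $K$ or $K'$.
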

\begin{proof}
Since $M$ is compact and $K$ is open, if $k$ is sufficiently large, then
\[
2\epsilon_k < \rho:=\mathrm{dist}(K',M\setminus K),
\]
where $K'$ is the non-empty closed subset of $K$ in {\bf H}.
Fix such a $k$. For every integer $m\ge 0$ and $x=(x^s,x^u)\in U_{k,m}$,
we prove by induction on $j=0,\dots,n_{k+m}^H$ that
\[
f^j(x)\in K,
\quad
\bigl|(f^j(x))^s-(f^j(\zeta_{k+m}))^s\bigr|
\le \epsilon_k,
\quad
\bigl|(f^j(x))^u-(f^j(\zeta_{k+m}))^u\bigr|
\le \frac14\epsilon_k.
\]

For $j=0$, since $x\in U_{k,m}$, we have
\[
|x^s-\zeta_{k+m}^s|
\le
\epsilon_{k,m}\sqrt{b_{k+m}}
\le
\epsilon_k
\]
and
\[
|x^u-\zeta_{k+m}^u|
\le
\frac14\epsilon_{k,m}b_{k+m}
\le
\frac14\epsilon_k.
\]
Hence
\[
\|x-\zeta_{k+m}\|<2\epsilon_k<\rho.
\]
Since $\zeta_{k+m}\in K'$, this implies $x\in K$.

Assume now that $f^i(x)\in K$ for every $0\le i\le j-1$, where $1\le j\le n_{k+m}^H$.
Since also $f^i(\zeta_{k+m})\in K$ for $0\le i\le j$, it follows from {\bf H} that
\[
\bigl|(f^j(x))^s-(f^j(\zeta_{k+m}))^s\bigr|
=
\lambda_s^j|x^s-\zeta_{k+m}^s|
\le
\epsilon_{k,m}\sqrt{b_{k+m}}
\le
\epsilon_k,
\]
and
\[
\bigl|(f^j(x))^u-(f^j(\zeta_{k+m}))^u\bigr|
=
\lambda_u^j|x^u-\zeta_{k+m}^u|
\le
\frac14\epsilon_{k,m}b_{k+m}\lambda_u^{n_{k+m}^H}.
\]
By \eqref{006},
\[
b_{k+m}\lambda_u^{n_{k+m}^H}
=
\lambda_u^{-2n_{k+m}^H}\sqrt{b_{k+m+1}}
<1,
\]
and therefore
\[
\bigl|(f^j(x))^u-(f^j(\zeta_{k+m}))^u\bigr|
<
\frac14\epsilon_{k,m}
\le
\frac14\epsilon_k.
\]
Hence
\[
\|f^j(x)-f^j(\zeta_{k+m})\|<2\epsilon_k<\rho.
\]
Since $f^j(\zeta_{k+m})\in K'$, we conclude that $f^j(x)\in K$.
This proves the claim.
\end{proof}

\subsection{Modified rectangle lemma}
Now, we are in the position to prove the following main result of Section ~\ref{Sec3}.

\begin{lem}\label{lem:mrl} Assume that $f\in\mathrm{Diff}^r(M)$ satisfies {\bf DH}, {\bf H}, {\bf QT} and {\bf OE}. Then, for every sufficiently large integer $k$ and every non-negative integer $m$, it holds that
\[f^{n_{k+m}}(U_{k,m})\subset U_{k,m+1}.\]
\end{lem}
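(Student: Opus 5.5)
We split $f^{n_{k+m}}=f^{n_{k+m}^T}\circ f^{n_{k+m}^H}$ and follow $U_{k,m}$ through the two stages: the linear hyperbolic passage inside $K$, then the tangential return governed by {\bf QT}. Throughout, write $n^H=n_{k+m}^H$, $n^T=n_{k+m}^T$, $\zeta'=\zeta_{k+m+1}'=f^{n^H}(\zeta_{k+m})$.

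\emph{Hyperbolic stage.} By Lemma \ref{eq:Sec3-Usubset} and {\bf H}, the whole orbit segment $f^j(U_{k,m})$, $0\le j\le n^H$, stays in $K$, where $f$ is linear. Hence $f^{n^H}(U_{k,m})$ is the rectangle centered at $\zeta'$ with horizontal half-width $\lambda_s^{n^H}\epsilon_{k,m}\sqrt{b_{k+m}}$ and vertical half-height $\tfrac14\lambda_u^{n^H}\epsilon_{k,m}b_{k+m}$. By \eqref{006} and Claim \ref{007}, the half-height is
\[
\tfrac14\epsilon_{k,m}\lambda_u^{-2n^H}\sqrt{b_{k+m+1}}\le \tfrac14\lambda_u^{-n^H}\epsilon_{k,m+1}\sqrt{b_{k+m+1}}.
\]
So the vertical side is shorter than the target width $\epsilon_{k,m+1}\sqrt{b_{k+m+1}}$ by the factor $\lambda_u^{-n^H}$. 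The horizontal side is of order $\lambda_s^{n^H}\epsilon_{k,m}\sqrt{b_{k+m}}$, which is extremely thin compared with everything else; this is where strong dissipativity enters.

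\emph{Tangential stage.} $f^{n^T}$ maps the vertical direction at $\zeta'$ onto the horizontal direction at $\zeta_{k+m+1}$, and by {\bf QT} its image is a parabola-like curve. For the vertical segment $\gamma$ of half-length $h=\tfrac14\lambda_u^{-2n^H}\epsilon_{k,m}\sqrt{b_{k+m+1}}$, I would use the following two bounds.
\begin{itemize}
\item[(a)] The horizontal projection of $f^{n^T}(\gamma)$ has length at most $\|Df^{n^T}\|h\le C^{n^T}h$. Since $n^T$ grows only subexponentially relative to $n^H$ (it is $o(n^H)$ by {\bf DH}), $C^{n^T}\lambda_u^{-n^H}\ll 1$, so this projection lies well within the target half-width.
\item[(b)] By {\bf QT}, the vertical projection is at most $C_T$ times the square of that, i.e. of order $C^{2n^T}\lambda_u^{-4n^H}\epsilon_{k,m}^2 b_{k+m+1}$. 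This is much less than the target half-height $\tfrac14\epsilon_{k,m+1}b_{k+m+1}$, provided $\epsilon_{k,m}^2\lambda_u^{-4n^H}\ll\epsilon_{k,m+1}$.
\end{itemize}
The proviso in (b) follows again from Claim \ref{007}, since $\epsilon_{k,m+1}\ge\lambda_u^{-n^H}\epsilon_{k,m}$.

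The thickness in the horizontal direction at time $n^H$ is at most $\lambda_s^{n^H}\epsilon_{k,m}$. After $f^{n^T}$ it becomes at most $C^{n^T}\lambda_s^{n^H}\epsilon_{k,m}$ in every direction. This must be $\ll\epsilon_{k,m+1}b_{k+m+1}$, where $b_{k+m+1}\approx\lambda_u^{-6n^H_{k+m+1}/1\cdot\frac12\cdots}$ is of order $\lambda_u^{-3(\text{weighted sum})}$. This comparison is exactly what the definition of $\epsilon_k$ together with the third and fifth inequalities of \eqref{001} are designed for. Concretely, $\lambda_s\lambda_u^{(9\beta-6)/(2-\beta)}<1$ has to dominate $b_{k+m+1}$, using $n_{k+i}^H\le \beta^{\lceil i/2\rceil}\alpha^{\lfloor i/2\rfloor}n_k^H$, so the geometric sum $\sum n^H_{k+m+1+i}2^{-i}$ is bounded by a multiple $(2-\beta)^{-1}$-type constant of $n^H$. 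Finally, the image of the whole rectangle is a thin neighborhood of $f^{n^T}(\gamma)$ by the $C^1$ bound, and so it lies in $U_{k,m+1}$.

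The main obstacle is this last bookkeeping step. One must show that $\lambda_s^{n^H}$, times the subexponential distortion $C^{n^T}$ of the tangential passage, beats $\epsilon_{k,m+1}b_{k+m+1}$ uniformly in $m$. This means tracking how $\epsilon_{k,m+1}=\epsilon_k^{(\alpha\beta)^{\lfloor (m+1)/2\rfloor}}$ compares with $\lambda_s^{n_{k+m}^H}$ via the growth of $n_{k+m}^H$ in {\bf OE}, with separate parity cases as in Claim \ref{007}. I would first bound $\sum_i n^H_{k+m+1+i}2^{-i}\le \frac{2\beta}{2-\beta}n^H_{k+m}(1+o(1))$, then reduce everything to the exponent inequality in \eqref{002}.
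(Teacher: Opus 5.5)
Your hyperbolic stage and items (a), (b) are correct. They coincide with the paper's proof of Claim~\ref{005}, which uses \eqref{006}, Claim~\ref{007} and {\bf QT} exactly as you do.

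The gap is in the step you flag yourself: controlling the images of the horizontal fibres $\ell^s_{k+m}(x)$, which is the paper's Claim~\ref{008}. This is not just unfinished bookkeeping. The inequality you set up there does not follow from the hypotheses, and it fails for admissible parameters. You bound the horizontal thickness at time $n^H$ by $\lambda_s^{n^H}\epsilon_{k,m}$, discarding the factor $\sqrt{b_{k+m}}$ that you had correctly recorded in the hyperbolic stage. Your last paragraph drops $\epsilon_{k,m}$ as well.

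To see the failure, take $m$ even, so that $\epsilon_{k,m+1}=\epsilon_{k,m}$. Since $n^H_j$ is nondecreasing for large $j$, we have $b_{k+m+1}\le\lambda_u^{-6n^H_{k+m+1}}\le\lambda_u^{-6n^H}$. Hence even your milder target $C^{n^T}\lambda_s^{n^H}\epsilon_{k,m}\ll\epsilon_{k,m+1}b_{k+m+1}$ would require $\lambda_s\lambda_u^{6}<1$. Nothing in {\bf H} or {\bf OE} gives this. For $\alpha,\beta$ close to $1$, all of \eqref{001} reduces to $\lambda_s\lambda_u^3<1$ (Remark~\ref{rmk:0307}). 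For example, $\lambda_u=4$, $\lambda_s=2^{-7}$ gives $\lambda_s\lambda_u^3=\frac12$ but $\lambda_s\lambda_u^6=32$.

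Your plan to bound $b_{k+m+1}$ below by $\lambda_u^{-\frac{6\beta}{2-\beta}n^H(1+o(1))}$ and then ``reduce everything to \eqref{002}'' cannot close either. \eqref{002} only controls $\lambda_s\lambda_u^{\frac{9\beta-6}{2-\beta}}$, and $\frac{9\beta-6}{2-\beta}<\frac{6\beta}{2-\beta}$ for $\beta<2$.

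The missing idea is to keep the factor you dropped. The fibres of $U_{k,m}$ have length $2\epsilon_{k,m}\sqrt{b_{k+m}}$. The lower bound $\sum_i n^H_{k+m+i}2^{-i}\ge\frac{2}{2-\alpha}n^H_{k+m}$ gives $\sqrt{b_{k+m}}\le\lambda_u^{-\frac{3}{2-\alpha}n^H}$, cf.~\eqref{010}. The quantity you must make small, uniformly in $m$, is therefore
\[
C^{n^T}\lambda_s^{n^H}\,\frac{\epsilon_{k,m}\sqrt{b_{k+m}}}{\epsilon_{k,m+1}\,b_{k+m+1}} .
\]
To handle it:
\begin{enumerate}
\item Use your own bound $b_{k+m+1}^{-1}\le\lambda_u^{\frac{6\beta'}{2-\beta'}n^H}$, with $\beta'>\beta$ close to $\beta$.
\item Control $\epsilon_{k,m}/\epsilon_{k,m+1}$, which equals $\epsilon_{k,m}^{1-\alpha\beta}$ for odd $m$. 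The paper uses $\epsilon_{k,m+1}\ge\epsilon_{k,m}^{3/2}$. This holds because $\alpha\beta<\frac32$, which follows from the second and sixth inequalities of \eqref{001}.
\item Write $\epsilon_{k,m}^{-1/2}$ through the definition of $\epsilon_k$, and use $n^H_{k+m}\ge(\alpha\beta)^{\lfloor m/2\rfloor}n^H_k-1$.
\end{enumerate}
Everything then reduces to the fifth inequality of \eqref{001}. The term $-\frac{3}{2-\alpha}$ in its exponent is contributed precisely by $\sqrt{b_{k+m}}$.

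Heuristically, for $\alpha,\beta\approx1$ the count is $\lambda_s\lambda_u^{6}\lambda_u^{-3}=\lambda_s\lambda_u^3$ with $\sqrt{b_{k+m}}$ kept, and $\lambda_s\lambda_u^6$ without it. That is exactly the difference between what strong dissipativity provides and what your version needs.
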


The proof of this lemma follows immediately from the next two claims. Let $\frac{1}{2} U_{k,m}$ denote the rectangle defined in the same way as $U_{k,m}$ but replacing its width and height by half of those of $U_{k,m}$.

\begin{figure}[hbt]
\centering
\scalebox{0.65}{\includegraphics[clip]{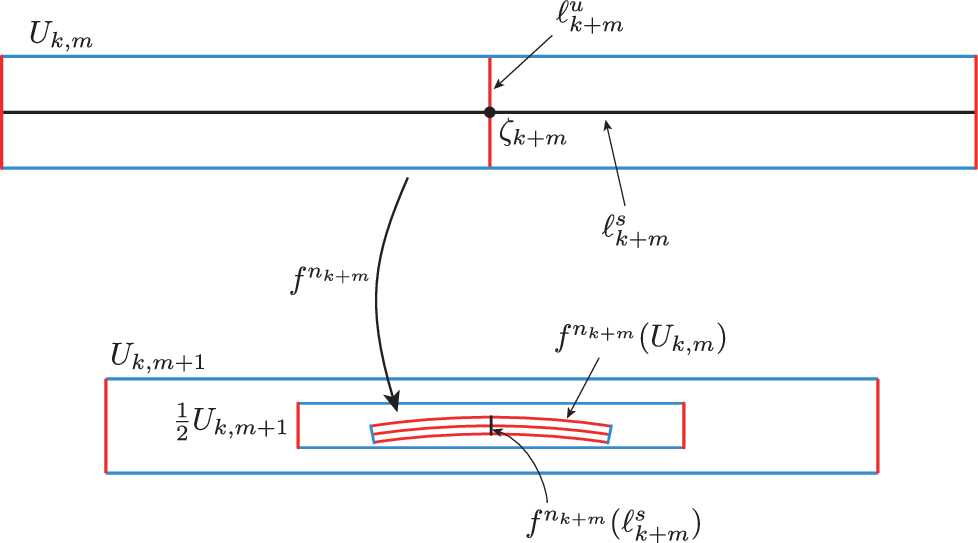}}
\caption{The modified rectangle lemma} 
\label{fig}
\end{figure}

\begin{clm}\label{005}For every non-negative integer $m$, we have
\[f^{n_{k+m}}(\ell_{k+m}^u)\subset \dfrac{1}{2} U_{k,m+1}.\]
\end{clm}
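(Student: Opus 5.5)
We split $f^{n_{k+m}}=f^{n_{k+m}^T}\circ f^{n_{k+m}^H}$ and follow the vertical segment $\ell^u_{k+m}$ through the two stages, using \textbf{H} for the first and \textbf{QT} for the second.

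\emph{Hyperbolic stage.} By Lemma~\ref{eq:Sec3-Usubset}, $f^j(\ell^u_{k+m})\subset K$ for $0\le j\le n^H_{k+m}$. In the linear coordinate of \textbf{H}, the image $f^{n^H_{k+m}}(\ell^u_{k+m})$ is therefore a vertical segment through $\zeta'_{k+m+1}=f^{n^H_{k+m}}(\zeta_{k+m})$. Its half-length is $\frac14\epsilon_{k,m}b_{k+m}\lambda_u^{n^H_{k+m}}$. By \eqref{006} this equals $\frac14\epsilon_{k,m}\lambda_u^{-2n^H_{k+m}}\sqrt{b_{k+m+1}}$.

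\emph{Tangential stage.} Call this segment $\gamma$. It lies in $\ell^u(\zeta'_{k+m+1})\cap K$ and contains $\zeta'_{k+m+1}$, so \textbf{QT} applies. We then need two facts about $f^{n^T_{k+m}}(\gamma)$, which passes through $\zeta_{k+m+1}$:
\begin{itemize}
\item[(i)] its horizontal projection $|\pi^s_{k+m+1}|$ is at most the half-width $\frac12\epsilon_{k,m+1}\sqrt{b_{k+m+1}}$ of $\frac12U_{k,m+1}$, measured on each side of $\zeta_{k+m+1}$;
\item[(ii)] its vertical projection is at most $\frac18\epsilon_{k,m+1}b_{k+m+1}$.
\end{itemize}

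For (i) we bound the horizontal spread using $\|Df^{n^T_{k+m}}\|$ on a neighbourhood of $\zeta'_{k+m+1}$. In the construction of Section~\ref{Sec2}, $n^T_k$ consists of the bounded pieces $N_0,N_1,N_2$ plus $\widehat n_k+\widehat i_{k+1}=O(k)$. If that part of the orbit is spent near the saddle, or is controlled as in \cite{KS2017}, the derivative should be at most $C\lambda_u^{O(k)}$, which is subexponential compared with $\lambda_u^{n^H}$ since $n^H$ grows exponentially. We also use that $Df^{n^T}v^u\in\mathbb R v^s$, so the curve leaves $\zeta_{k+m+1}$ horizontally. This gives
\[
|\pi^s(f^{n^T}(\gamma))|\le C\lambda_u^{o(n^H_{k+m})}\,\epsilon_{k,m}\lambda_u^{-2n^H_{k+m}}\sqrt{b_{k+m+1}}.
\]
Claim~\ref{007} gives $\epsilon_{k,m}\le\lambda_u^{n^H_{k+m}}\epsilon_{k,m+1}$, so the right-hand side is at most $C\lambda_u^{-n^H_{k+m}+o(n^H_{k+m})}\epsilon_{k,m+1}\sqrt{b_{k+m+1}}$. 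This is below $\frac12\epsilon_{k,m+1}\sqrt{b_{k+m+1}}$ once $k$ is large.

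For (ii), \textbf{QT} gives
\[
|\pi^u|\le C_T|\pi^s|^2\le C_T\bigl(\tfrac12\epsilon_{k,m+1}\sqrt{b_{k+m+1}}\bigr)^2=\tfrac{C_T}{4}\epsilon_{k,m+1}^2b_{k+m+1}.
\]
This is at most $\frac18\epsilon_{k,m+1}b_{k+m+1}$ as soon as $\epsilon_{k,m+1}\le 1/(2C_T)$. That holds for large $k$, because $\epsilon_{k,m+1}\le\epsilon_k\to0$.

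The main obstacle is making step (i) rigorous. The derivative of $f^{n^T_{k+m}}$ along the tangential excursion has to be bounded by something like $\lambda_u^{o(n^H)}$, uniformly in $k$. Equivalently, the exponential factor $\lambda_u^{-n^H_{k+m}}$ gained from Claim~\ref{007} must beat the polynomial-length transit $\widehat n_k+\widehat i_{k+1}$. I would try to obtain this from the structure of \cite[Lemma~8.1]{KS2017}, where the transit near $a$ contracts the horizontal direction. Failing that, I would use the crude bound $\|Df\|_\infty^{O(k)}$ and compare it against the doubly exponential smallness of $\lambda_u^{-n^H_{k+m}}$.
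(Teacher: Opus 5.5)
Your proposal is correct and is essentially the paper's proof: push $\ell^u_{k+m}$ through the linear region using Lemma~\ref{eq:Sec3-Usubset} and \eqref{006}, bound the transit crudely by $\Lambda^{n^T_{k+m}}$ with $\Lambda=\sup\|Df\|$, absorb $\epsilon_{k,m}$ via Claim~\ref{007}, and get the vertical bound from \textbf{QT} together with $\epsilon_{k,m+1}\le\epsilon_k\to0$. The step you flag as the main obstacle is not one, and it needs no structure from \cite{KS2017}, since your crude fallback is exactly what the paper uses: \textbf{DH}, which is a hypothesis of the claim, gives $n^T_{k+m}/n^H_{k+m}\to0$ and hence $\Lambda^{n^T_{k+m}}\lambda_u^{-n^H_{k+m}}<1$ for all large $k$, uniformly in $m$.
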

\begin{clm}\label{008} For every non-negative integer $m$, we have
\[\dfrac{|f^{n_{k+m}}(\ell^s_{k+m}(x))|}{\mathrm{diam}(U_{k,m+1})}<\dfrac{1}{8}\]
for every $x\in\ell^u_{k+m}$.
\end{clm}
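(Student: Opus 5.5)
The statement to prove is Claim \ref{008}. I would split $f^{n_{k+m}}=f^{n_{k+m}^T}\circ f^{n_{k+m}^H}$ and estimate the length of the image of the horizontal segment $\ell^s_{k+m}(x)$ in two stages.

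\emph{Hyperbolic stage.} By Lemma \ref{eq:Sec3-Usubset} and {\bf H}, the first $n_{k+m}^H$ iterates act on $U_{k,m}$ by the linear map $(x,y)\mapsto(\lambda_s^{n^H_{k+m}}x,\lambda_u^{n^H_{k+m}}y)$. Hence $f^{n^H_{k+m}}(\ell^s_{k+m}(x))$ is a horizontal segment of length
\[
2\lambda_s^{n^H_{k+m}}\epsilon_{k,m}\sqrt{b_{k+m}} .
\]

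\emph{Transition stage.} The remaining $n^T_{k+m}=N_0+N_1+N_2+\widehat n_{k+m}+\widehat i_{k+m+1}=O(k+m)$ iterates consist of a bounded part ($N_0,N_1,N_2$) and excursions of length $O(k+m)$. I would bound $\|Df^{n^T_{k+m}}\|\le C\,L^{O(k+m)}$, with $L=\max\|Df\|$ and $C$ independent of $k,m$. A cleaner alternative would be to use that these excursions also lie in the linearizing region, so that the norm is bounded by $C\lambda_u^{c(k+m)}$. This gives
\[
|f^{n_{k+m}}(\ell^s_{k+m}(x))|\le C L^{c(k+m)}\lambda_s^{n^H_{k+m}}\epsilon_{k,m}\sqrt{b_{k+m}} .
\]

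\emph{Lower bound for the diameter.} Since $b<1$, the width dominates the height, so $\mathrm{diam}(U_{k,m+1})\ge 2\epsilon_{k,m+1}\sqrt{b_{k+m+1}}$. From \eqref{006}, $\sqrt{b_{k+m+1}}=\lambda_u^{3n^H_{k+m}}b_{k+m}$, so the ratio to be made less than $\frac18$ is bounded by
\[
C L^{c(k+m)}\,\lambda_s^{n^H_{k+m}}\,\lambda_u^{-3n^H_{k+m}}\,\frac{\epsilon_{k,m}}{\epsilon_{k,m+1}}\,b_{k+m}^{-1/2} .
\]

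\emph{Controlling the ratio.} The main difficulty is that $b_{k+m}^{-1/2}=\lambda_u^{\frac32\sum_i n^H_{k+m+i}2^{-i}}$ is large, and $\epsilon_{k,m}/\epsilon_{k,m+1}$ can also be large, since it equals $\epsilon_{k,m}^{1-\alpha\beta}$ when $m$ is odd.

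For the first factor, I would use the {\bf OE} growth $n^H_{j+1}\le\beta n^H_j+1$ to get
\[
\sum_i n^H_{k+m+i}2^{-i}\le \frac{2}{2-\beta}\,n^H_{k+m}+O(1).
\]
Thus $b_{k+m}^{-1/2}\le C\lambda_u^{\frac{3}{2-\beta}n^H_{k+m}}$. This is where the exponents $\frac{\cdot}{2-\beta}$ in \eqref{001} come from.

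For the second factor, I would write $\epsilon_{k,m}$ as a power of $\lambda_s\lambda_u^{(9\beta-6)/(2-\beta)+\ldots}$ with exponent comparable to $n^H_{k+m}$, using the relation between $(\alpha\beta)^{\lfloor m/2\rfloor}n_k^H$ and $n^H_{k+m}$. The argument is the same style as \eqref{eq:Sec3-Claim007-lower}, but it now needs an upper bound $n^H_{k+m}\lesssim\beta(\alpha\beta)^{\lfloor m/2\rfloor}n^H_k$.

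Collecting exponents, the ratio is then bounded by $C L^{c(k+m)}\Theta^{\,n^H_{k+m}}$ with
\[
\Theta=\lambda_s\lambda_u^{-3+\frac{3}{2-\beta}}\,(\epsilon\text{-correction}).
\]
I would check that $\Theta<1$ from the fifth and sixth inequalities in \eqref{001}, falling back on the third inequality together with $\lambda_s\lambda_u^3<1$ if necessary. Since $n^H_{k+m}$ grows exponentially in $k+m$ while the transition cost is only exponential in the linear quantity $k+m$, the bound drops below $\frac18$ for all $k\ge k_0$ large, uniformly in $m$.

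I expect the main obstacle to be the odd-$m$ case. There the factor $\epsilon_{k,m}^{1-\alpha\beta}$ has to be absorbed by the contraction $\lambda_s^{n^H}$, and this requires the precise inequalities in \eqref{001}.
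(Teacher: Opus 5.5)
Your argument proves the inequality as literally stated, but by a different and considerably easier route than the paper's, and the difference matters. You bound $\mathrm{diam}(U_{k,m+1})$ below by the width $2\epsilon_{k,m+1}\sqrt{b_{k+m+1}}$. Then \eqref{006} reduces the $b$-dependence to the single factor $\lambda_u^{-3n^H_{k+m}}b_{k+m}^{-1/2}$, and the exponent check becomes almost automatic. The paper instead compares the image length with $\frac14\epsilon_{k,m+1}b_{k+m+1}$, i.e.\ with the height. Relative to your bound this costs an extra factor of order $b_{k+m+1}^{-1/2}$. Absorbing it is exactly why the paper needs the two-sided bounds \eqref{010}, the inequality $\epsilon_{k,m+1}\ge\epsilon_{k,m}^{3/2}$ (via $\alpha\beta<\frac32$), and the fifth inequality of \eqref{001}.

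What the paper's stronger form buys is the deduction of Lemma~\ref{lem:mrl}. Since $Df^{n^T_{k+m}}$ is invertible and sends $v^u$ into $\mathbb{R}v^s$ by {\bf QT}, it sends $v^s$ out of $\mathbb{R}v^s$, so $f^{n_{k+m}}(\ell^s_{k+m}(x))$ is tilted. To keep this image inside $U_{k,m+1}$ when it passes through $\frac12U_{k,m+1}$, you need its length controlled by the vertical clearance $\frac18\epsilon_{k,m+1}b_{k+m+1}$, which is far smaller than $\frac18\mathrm{diam}(U_{k,m+1})$. So your width-based estimate is fine for the claim as written, but as it stands it could not replace the paper's proof where the claim is actually used.

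Several steps also need repair.

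\emph{Transition time.} The claim is made under {\bf DH}, {\bf H}, {\bf QT}, {\bf OE} only, so $n^T_k=O(k)$ is not available; it comes from the specific construction in Section~\ref{Sec2}. Use {\bf DH} as the paper does: $n^T_j/n^H_j\to0$ lets you absorb $\Lambda^{n^T_{k+m}}$ into a fraction of the exponential gain, uniformly in $m$. Your alternative, that the transition excursions lie in the linearizing chart, is unjustified, since the transition passes near the tangency, outside $K$.

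\emph{Direction of the bound.} For odd $m=2t+1$, you must bound $\epsilon_{k,m}/\epsilon_{k,m+1}=\epsilon_{k,m}^{1-\alpha\beta}>1$ from above. That requires the lower bound $n^H_{k+2t+1}>\alpha(\alpha\beta)^t n^H_k-1$, i.e.\ \eqref{eq:Sec3-Claim007-lower} itself, not an upper bound on $n^H_{k+m}$.

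\emph{The exponent check.} With that fix, the verification you left open is short and uses only \eqref{002}. Put $E_k=\frac{9\beta-6+18(n^H_k)^{-1}}{2-\beta}$ and $w:=-\log(\lambda_s\lambda_u^{E_k})>0$. Then the exponent per unit of $n^H_{k+m}$ is
\[
-\Bigl(1-\frac{\alpha\beta-1}{\alpha}\Bigr)w+\frac{3-6\beta-18(n^H_k)^{-1}}{2-\beta}\log\lambda_u<-3\log\lambda_u .
\]
This holds because $\alpha\beta<1+\alpha$ (which follows from $\alpha\beta<\frac32$) and $\beta>1$. For even $m$ the coefficient of $w$ is simply $1$. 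The fifth and sixth inequalities of \eqref{001} play no role in your version; they are what the height comparison requires.
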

\begin{proof}[Proof of Claim \ref{005}]
We use 
\[\pi^s_{k+m}:U_{k,m}\to \ell_{k+m}^s
\quad\mbox{and}\quad
\pi^u_{k+m}:U_{k,m}\to \ell_{k+m}^u
\]
to denote the orthogonal projections to $\ell_{k+m}^s$ and $\ell_{k+m}^u$ respectively. Notice that $f^{n_{k+m}}(\zeta_{k+m})=\zeta_{k+m+1}$ by condition {\bf DH}. Thus, to prove the claim, let us estimate the lengths of the orthogonal projections of $f^{n_{k+m}}(\ell_{k+m}^u)$. Define
\[\Lambda=\sup\big\{\|Df(x)v\|:x\in M,\ v\in T_xM,\ \|v\|=1\big\}.\]

For the orthogonal projections to $\ell^s_{k+m+1}$, since the segment $\ell_{k+m}^u$ stays inside $K$ for the first
$n_{k+m}^H$ iterates by Lemma \ref{eq:Sec3-Usubset}, we have 
\begin{align*}
|\pi^s_{k+m+1}(f^{n_{k+m}}(\ell^u_{k+m}))|
&\le |f^{n_{k+m}}(\ell^u_{k+m})| = |f^{n_{k+m}^T}\circ f^{n_{k+m}^H}(\ell^u_{k+m})|\\
&\le \Lambda^{n_{k+m}^T}\lambda_u^{n_{k+m}^H}|\ell_{k+m}^u|
\le \Lambda^{n_{k+m}^T}\lambda_u^{n_{k+m}^H}\cdot \dfrac{1}{2}\epsilon_{k,m}b_{k+m}\\
&= \dfrac{1}{2}\epsilon_{k,m}\Lambda^{n_{k+m}^T}\dfrac{\sqrt{b_{k+m+1}}}{\lambda_u^{2n_{k+m}^H}},
\end{align*}
where the last equality comes from \eqref{006}. By applying Claim \ref{007} in the last inequality, we obtain
\begin{align}\label{105}
\begin{split}
|\pi^s_{k+m+1}(f^{n_{k+m}}(\ell^u_{k+m}))|
&\le \dfrac{1}{2}\epsilon_{k,m+1}\sqrt{b_{k+m+1}}\dfrac{\Lambda^{n_{k+m}^T}}{\lambda_u^{n_{k+m}^H}}\\
&\le\dfrac{1}{2}\epsilon_{k,m+1}\sqrt{b_{k+m+1}}.
\end{split}
\end{align}
The last inequality holds for every sufficiently large $k$ because by {\bf DH}, we have
\[n^T_{k+m}\log \Lambda-n^H_{k+m}\log\lambda_u=n^H_{k+m}\left(\dfrac{n^T_{k+m}}{n^H_{k+m}}\log \Lambda-\log\lambda_u\right)\to -\infty,\ (k\to\infty)\]
which gives $\Lambda^{n_{k+m}^T}/\lambda_u^{n_{k+m}^H}<1$ for sufficiently large $k$.

For the orthogonal projections to $\ell^u_{k+m+1}$, by {\bf QT} we have
\begin{align*}
|\pi^u_{k+m+1}(f^{n_{k+m}}(\ell^u_{k+m}))|
&\le C_T |\pi^s_{k+m+1}(f^{n_{k+m}}(\ell^u_{k+m}))|^2
\\
&\le C_T  \Big(\dfrac{1}{2}\epsilon_{k,m+1}\sqrt{b_{k+m+1}}\Big)^2 
= \dfrac{C_T}{4} \epsilon^2_{k,m+1}b_{k+m+1}.
\end{align*}
But the height of $U_{k,m+1}$ is $\frac{1}{2}\epsilon_{k,m+1}b_{k+m+1}$ by our construction. Hence, the ratio of $|\pi^u_{k+m+1}(f^{n_{k+m}}(\ell^u_{k+m}))|$ to the height of $U_{k,m+1}$ is less than
$
\frac{1}{2}C_T\epsilon_{k,m+1}.
$
On the other hand, by the definition of
$\epsilon_{k,m+1}$, we have
\[
\epsilon_{k,m+1}\le \epsilon_k
\]
for every $m\ge 0$. Since $\epsilon_k\to 0$ as $k\to\infty$, the above ratio tends to $0$ uniformly in $m$. In particular, it is less than $\frac{1}{2}$ for every sufficiently large $k$ and every $m$. Combining \eqref{105}, we obtain
\[f^{n_{k+m}}(\ell^u_{k+m})\subset\frac{1}{2}U_{k,m+1}\] which completes the proof of Claim \ref{005}.
\end{proof}

\begin{proof}[Proof of Claim \ref{008}]
On the one hand, by Lemma \ref{eq:Sec3-Usubset}, for every $x\in\ell^u_{k+m}$, we have 
\begin{align*}
|f^{n_{k+m}}(\ell^s_{k+m}(x))|
&=
|f^{n^T_{k+m}}\circ f^{n^H_{k+m}}(\ell^s_{k+m}(x))|\\
&\le
\Lambda^{n_{k+m}^T}\lambda_s^{n_{k+m}^H}|\ell^s_{k+m}(x)|\\
&=
\Lambda^{n_{k+m}^T}\lambda_s^{n^H_{k+m}}\cdot 2\epsilon_{k,m}\sqrt{b_{k+m}}.
\end{align*}
On the other hand, according to our construction,
\[
\mathrm{diam}(U_{k,m+1})\ge \frac{1}{4}\epsilon_{k,m+1}b_{k+m+1}.
\]
Therefore, it is enough to prove that
\begin{equation}\label{009}
\sup_{m\ge 0}
\Lambda^{n_{k+m}^T}\lambda_s^{n^H_{k+m}}
\frac{\epsilon_{k,m}\sqrt{b_{k+m}}}{\epsilon_{k,m+1}b_{k+m+1}}
\longrightarrow 0
\qquad (k\to\infty).
\end{equation}
Set
\[
R_{k,m}:=
\Lambda^{n_{k+m}^T}\lambda_s^{n^H_{k+m}}
\frac{\epsilon_{k,m}\sqrt{b_{k+m}}}{\epsilon_{k,m+1}b_{k+m+1}}.
\]
By the fifth inequality of \eqref{001}, choose $\beta'>\beta$ so close to $\beta$ that
\begin{equation}\label{015}
\theta:=\lambda_s^{\frac12}
\lambda_u^{\frac{6\beta'}{2-\beta'}-\frac{3}{2-\alpha}-\frac{9\beta-6}{2(2-\beta)}}<1.
\end{equation}
For every $k,i\ge0$, condition {\bf OE} gives
\begin{equation}\label{351}
n_{k+2i}^H>(\alpha\beta)^i n_k^H-1
\qquad\text{and}\qquad
n_{k+2i+1}^H>\alpha(\alpha\beta)^i n_k^H-1,
\end{equation}
and also
\[
n_{k+2i}^H<(\alpha\beta)^i(n_k^H+1)
\qquad\text{and}\qquad
n_{k+2i+1}^H<\beta(\alpha\beta)^i(n_k^H+1).
\]
Hence
\begin{align*}
\sum_{i=0}^\infty \frac{n_{k+i}^H}{2^i}
&=
\sum_{j=0}^\infty \frac{n_{k+2j}^H}{4^j}
+\frac12\sum_{j=0}^\infty \frac{n_{k+2j+1}^H}{4^j}\\
&>
\sum_{j=0}^\infty \frac{(\alpha\beta)^j n_k^H-1}{4^j}
+\frac12\sum_{j=0}^\infty \frac{\alpha(\alpha\beta)^j n_k^H-1}{4^j}\\
&=
\frac{1+\alpha/2}{1-\alpha\beta/4}\,n_k^H-2,
\end{align*}
and
\begin{align*}
\sum_{i=0}^\infty \frac{n_{k+i}^H}{2^i}
&<
\sum_{j=0}^\infty \frac{(\alpha\beta)^j (n_k^H+1)}{4^j}
+\frac12\sum_{j=0}^\infty \frac{\beta(\alpha\beta)^j (n_k^H+1)}{4^j}\\
&=
\frac{1+\beta/2}{1-\alpha\beta/4}\,(n_k^H+1).
\end{align*}
Since
\[
\frac{1+\alpha/2}{1-\alpha\beta/4}>\frac{2}{2-\alpha}
\qquad\text{and}\qquad
\frac{1+\beta/2}{1-\alpha\beta/4}<\frac{2}{2-\beta}<\frac{2}{2-\beta'},
\]
it follows that, for every sufficiently large $k$,
\[
\frac{2}{2-\alpha} n_k^H
\le
\sum_{i=0}^\infty \frac{n_{k+i}^H}{2^i}
\le
\frac{2}{2-\beta'} n_k^H.
\]
Combining this with the definition of $b_k$ in \eqref{003}, we obtain
\begin{equation}\label{010}
\lambda_u^{-\frac{6}{2-\beta'}n_k^H}\le b_k \le \lambda_u^{-\frac{6}{2-\alpha}n_k^H}
\end{equation}
for every sufficiently large $k$.
Furthermore,
since
\[
\frac{n_{2p+1}^H}{n_{2p}^H}\to \alpha,
\quad
\frac{n_{2p+2}^H}{n_{2p+1}^H}\to \beta,
\quad
n_j^H\to\infty
\quad \frac{n_j^T}{n_j^H}\to 0 , 
\]
by {\bf DH} and {\bf OE}, 
we have
\begin{equation}\label{eq:0314c1}
n_j^H \le n_{j+1}^H\le \beta' n_j^H,
\quad
n_j^H\le n_j^H+1\le 2n_j^H,
\quad\Lambda^{n_j^T}\le \theta^{-\,n_j^H/(4\beta')}
\end{equation}
for sufficiently large $j$.

Next, we claim that
$
\alpha\beta<\frac32.
$
Indeed, if $\alpha\beta\ge \frac32$, then the second inequality of \eqref{001} gives
$
0>\frac52\alpha-2-\frac1{\alpha\beta}\ge \frac52\alpha-2-\frac23,
$
and hence
$
\alpha<\frac{16}{15}.
$
On the other hand, the inequality
$
\lambda_s\lambda_u^{\frac{7\alpha}{2}}
>
\lambda_s\lambda_u^{\frac{9\beta-6}{2-\beta}}
$
in \eqref{001} implies
$
\beta<\frac{2(7\alpha+6)}{18+7\alpha}.
$
Therefore,
\[
\alpha\beta
<
\alpha\cdot \frac{2(7\alpha+6)}{18+7\alpha}
\le
\frac{16}{15}\cdot \frac{2(7\cdot \frac{16}{15}+6)}{18+7\cdot \frac{16}{15}}
<
\frac32,
\]
which is a contradiction.
Consequently, we get
\begin{equation}\label{eq:0314c3}
\epsilon_{k,m+1}\ge \epsilon_{k,m}^{3/2}
\qquad\text{for every }m\ge0.
\end{equation}
Indeed, since $0<\epsilon_{k,m}<1$, if $m$ is even, then
$
\epsilon_{k,m+1}=\epsilon_{k,m}\ge \epsilon_{k,m}^{3/2},
$
while if $m$ is odd, then
$
\epsilon_{k,m+1}=\epsilon_{k,m}^{\alpha\beta}\ge \epsilon_{k,m}^{3/2}.
$

Now fix $k$ so large that
\eqref{010} and \eqref{eq:0314c1} hold for every integer $j\ge k$.
Then, we obtain
$
\sqrt{b_{k+m}}
\le
\lambda_u^{-\frac{3}{2-\alpha}n_{k+m}^H}
$ and
$\frac{1}{b_{k+m+1}}
\le
\lambda_u^{\frac{6}{2-\beta'}n_{k+m+1}^H}
$ from \eqref{010}.
Therefore, it follows from \eqref{eq:0314c1} and \eqref{eq:0314c3} that
\begin{align*}
R_{k,m}
&\le
\Lambda^{n^T_{k+m}}\lambda_s^{n^H_{k+m}}
\frac{\epsilon_{k,m}}{\epsilon_{k,m+1}}
\lambda_u^{\frac{6}{2-\beta'}n^H_{k+m+1}-\frac{3}{2-\alpha}n^H_{k+m}}.
\\
&\le
\Lambda^{n^T_{k+m}}\lambda_s^{n^H_{k+m}}
\frac{\epsilon_{k,m}}{\epsilon_{k,m+1}}
\lambda_u^{\frac{6\beta'}{2-\beta'}n^H_{k+m}-\frac{3}{2-\alpha}n^H_{k+m}}.
\\
&\le
\Lambda^{n^T_{k+m}}
\frac{\Bigl(\lambda_s\lambda_u^{\frac{6\beta'}{2-\beta'}-\frac{3}{2-\alpha}}\Bigr)^{n^H_{k+m}}}{\sqrt{\epsilon_{k,m}}}.
\end{align*}
Hence, since $n_{k+m}^H\ge (\alpha\beta)^{\lfloor m/2\rfloor}n_k^H-1$ by \eqref{351}, setting $C:=\Bigl(\lambda_s\lambda_u^{\frac{6\beta'}{2-\beta'}-\frac{3}{2-\alpha}}\Bigr)^{-1}$, we have
\begin{align}\label{eq:008-theta}
\begin{split}
 R_{k,m}
&\le
C 
\Lambda^{n^T_{k+m}}
\frac{\Bigl(\lambda_s\lambda_u^{\frac{6\beta'}{2-\beta'}-\frac{3}{2-\alpha}}\Bigr)^{(\alpha\beta)^{\lfloor m/2\rfloor}n_k^H}}{\sqrt{\epsilon_{k,m}}}\\
&=
C 
\Lambda^{n^T_{k+m}}
\Biggl(
\lambda_s^{\frac12}
\lambda_u^{\frac{6\beta'}{2-\beta'}-\frac{3}{2-\alpha}-\frac{9\beta-6+18(n_k^H)^{-1}}{2(2-\beta)}}
\Biggr)^{(\alpha\beta)^{\lfloor m/2\rfloor}n_k^H}\\
&\le
C 
\Lambda^{n^T_{k+m}}
\theta^{(\alpha\beta)^{\lfloor m/2\rfloor}n_k^H},
\end{split}
\end{align}
where we used $\lambda_u >1$ in the last inequality.

Now we compare $(\alpha\beta)^{\lfloor m/2\rfloor}n_k^H$ with $n_{k+m}^H$.
If $m=2t$, then by {\bf OE} and \eqref{eq:0314c1},
\[
n_{k+m}^H=n_{k+2t}^H<(\alpha\beta)^t(n_k^H+1)\le 2(\alpha\beta)^t n_k^H.
\]
If $m=2t+1$, then by {\bf OE} and \eqref{eq:0314c1},
\[
n_{k+m}^H=n_{k+2t+1}^H<\beta(\alpha\beta)^t(n_k^H+1)\le 2\beta'(\alpha\beta)^t n_k^H.
\]
Therefore, for every sufficiently large $k$ and every $m\ge0$,
\[
(\alpha\beta)^{\lfloor m/2\rfloor}n_k^H\ge \frac{1}{2\beta'}\,n_{k+m}^H.
\]
Hence, since $0<\theta<1$, \eqref{eq:008-theta} and \eqref{eq:0314c1} yields
\begin{align*}
R_{k,m}
&\le
C 
\Lambda^{n^T_{k+m}}
\theta^{\,n_{k+m}^H/(2\beta')}\\
&\le
C 
\theta^{\,n_{k+m}^H/(4\beta')}\\
&\le
C 
\theta^{\,n_k^H/(4\beta')}.
\end{align*}
Since $n_k^H\to\infty$, the right-hand side tends to $0$ as $k\to\infty$, uniformly in $m\ge0$.
This proves \eqref{009}, and hence the claim.
\end{proof}

\section{Jacobian matrices of the return maps}
\label{s:5}

We fix the (constant) stable and unstable directions on $K$ along the standard basis:
\[
    E^s = \mathrm{span}(v^s),\quad E^u = \mathrm{span}(v^u),\qquad v^s=\begin{pmatrix}1 \\ 0 \end{pmatrix},
    v^u=\begin{pmatrix}0 \\ 1 \end{pmatrix}.
\]
We study the representation matrix $A^{(m)}$ of $Df^{n_{k_0}+n_{k_0+1}+\cdots+n_{k_0+m-1}}$ on $U_{k_0,0}$ with respect to $\{v^s,v^u\}$ (the Jacobian matrix) with a fixed integer $k_0$.
This matrix is the product 
\[
A^{(m)}=A_{m-1}\cdots A_0,
\]
where $A_l$ represents $Df^{n_{k_0+l}}$.
In this section, we will show that each $A_l$ takes the form
\begin{equation*}
    A_l=\begin{bmatrix}
        b_l^{11}\lambda_s^{n_{k_0+l}^H} & b_l^{12}\lambda_u^{n_{k_0+l}^H}\\
        b_l^{21}\lambda_s^{n_{k_0+l}^H} & b_l^{22}\lambda_s^{n_{k_0+l}^H}
    \end{bmatrix},
\end{equation*}
where the coefficients $b_l^{ij}$ are bounded independently of $n_{k_0+l}^H$, 
with bounds controlled by $n_{k_0+l}^T$.

\subsection{Constants $k_0$ and $\xi$}\label{ss:k0}

Introduce
\[
    \Lambda_0 :=     B,
    \qquad \Lambda_1 := \max\Big\{2,\frac{L}{B(B-1)}\Big\}
    \Lambda_0^2,
\]
 where $B:=\sup_{\sigma=\pm1,\,x\in M}\|Df^{\sigma}(x)\|$ and $L:=\mathrm{Lip}(Df)$.
 Set
\begin{align*}
&C_{f,m}:=\Lambda_1^{n_{k_0}^T+\cdots+n_{k_0+m-1}^T},\\
&\Lambda_{u}^{(m)}:=
\begin{cases}
\lambda_{u}^{n_{k_0+m-1}^H}\lambda_{s}^{n_{k_0+m-2}^H}\cdots \lambda_{u}^{n_{k_0}^H} \quad &(\text{$m$: odd})\\
\lambda_{u}^{n_{k_0+m-1}^H}\lambda_{s}^{n_{k_0+m-2}^H}\cdots \lambda_{s}^{n_{k_0}^H}\quad &(\text{$m$: even})
\end{cases},\\
&\Lambda_{s}^{(m)}:=
\begin{cases}
\lambda_{s}^{n_{k_0+m-1}^H}\lambda_{u}^{n_{k_0+m-2}^H}\cdots \lambda_{s}^{n_{k_0}^H} \quad &(\text{$m$: odd})\\
\lambda_{s}^{n_{k_0+m-1}^H}\lambda_{u}^{n_{k_0+m-2}^H}\cdots \lambda_{u}^{n_{k_0}^H}\quad &(\text{$m$: even})
\end{cases}
\end{align*}
for $m\ge 1$.
Fix $\beta' \in(\beta,\frac{5}{4})$ (so that $\frac{3}{2-\beta'}-4<0$).
We then fix an even integer $k_0$ and $\xi\in(0,1)$ satisfying the following.
(The particular numerical constants appearing below are not essential. They are chosen only for convenience in the later estimates.)
\begin{itemize}
    \item The conclusion of Lemma~\ref{lem:mrl} holds.
\item For all integers $k\ge k_0$ and $m\ge 0$,
\begin{equation}\label{eq:1004}
\begin{split}
&(\alpha\beta)^{\lceil\frac{m}{2}\rceil-\lfloor\frac{m}{2}\rfloor}
\log\Bigl(\lambda_s^{1+(n_k^H)^{-1}}\lambda_u^{\frac{7\alpha}{2}}\Bigr)\\
&\qquad\ge
\log\Bigl(\lambda_s\lambda_u^{\frac{9\beta-6+18(n_k^H)^{-1}}{2-\beta}}\Bigr)
+(n_k^H)^{-1}(\alpha\beta)^{-\lfloor\frac{m}{2}\rfloor}
(\log(2\lambda_u)-\log\xi),
\end{split}
\end{equation}
    where $\lceil x\rceil$ is the least integer greater than or equal to $x$ and $\lfloor x\rfloor$ is the greatest integer less than or equal to $x$.
    \item It holds that
    \begin{equation}\label{eq:20251018b}
       \max\{\xi\Lambda_1^{2n_{k_0}^T},
      (\lambda_s\lambda_u^{-1})^{n_{k_0}^H}\Lambda_1^{2n_{k_0}^T}
      \}\le 10^{-4}.
    \end{equation}
\item For all integers $k\ge k_0$, the following hold:
\begin{subequations}
\begin{align}
&n_{k+1}^H \ge n_k^H,
\label{eq:20260314mono}
\\
&\Big(\frac{\lambda_s}{\lambda_u}\Big)^{n_k^H}
\le
\frac{\Lambda_1^{-2n_k^T}}{2},
\label{eq:20251101}
\\
&4\lambda_u^{\left(\frac{3}{2-\beta'}-4\right)n_{k}^H}
\le 1.
\label{eq:20260308}
\end{align}
\end{subequations}
\item For all integers $m\ge 0$
\begin{equation}\label{eq:lem53step3}
\tfrac{7}{2}\alpha-2
-(\alpha\beta)^{\lfloor m/2\rfloor-\lceil m/2\rceil}
+3(n_{k_0}^H)^{-1}(\alpha\beta)^{-\lceil m/2\rceil}
<\alpha.
\end{equation}
    \item 
    For all integers $m\ge 1$,
    \begin{equation}\label{eq:k0decay}
        \frac{\Lambda_{s}^{(m)}}{\Lambda_{u}^{(m)}}
        \le \frac{1}{12\cdot 3^{m} C_{f,m}^{2}\Lambda_1^{2n_{k_0+m}^T}}.
    \end{equation}
\end{itemize}

Such $k_0$ and $\xi$ exist.
Regarding \eqref{eq:1004} and \eqref{eq:20251018b}, by \eqref{001} we have
\[
\log\Bigl(\lambda_s\lambda_u^{\frac{7\alpha}{2}}\Bigr)
-\log\Bigl(\lambda_s\lambda_u^{\frac{9\beta-6}{2-\beta}}\Bigr)>0
\]
and
\[
\alpha\beta\log\Bigl(\lambda_s\lambda_u^{\frac{7\alpha}{2}}\Bigr)
-\log\Bigl(\lambda_s\lambda_u^{\frac{9\beta-6}{2-\beta}}\Bigr)>0.
\]
Hence one can choose positive constants $c_0,c_1$ such that
\[
0<c_0<c_1<
\min\Biggl\{
\log\Bigl(\lambda_s\lambda_u^{\frac{7\alpha}{2}}\Bigr)
-\log\Bigl(\lambda_s\lambda_u^{\frac{9\beta-6}{2-\beta}}\Bigr),\,
\alpha\beta\log\Bigl(\lambda_s\lambda_u^{\frac{7\alpha}{2}}\Bigr)
-\log\Bigl(\lambda_s\lambda_u^{\frac{9\beta-6}{2-\beta}}\Bigr)
\Biggr\}.
\]
Since $n_k^H\to\infty$ by {\bf OE} and $n_k^T/n_k^H\to0$ by {\bf DH}, after taking $k_0$ sufficiently large and even we have
\begin{align*}
&n_{k_0}^H\Biggl\{
(\alpha\beta)^{\lceil\frac{m}{2}\rceil-\lfloor\frac{m}{2}\rfloor}
\log\Bigl(\lambda_s^{1+(n_{k_0}^H)^{-1}}\lambda_u^{\frac{7\alpha}{2}}\Bigr)
-\log\Bigl(\lambda_s\lambda_u^{\frac{9\beta-6+18(n_{k_0}^H)^{-1}}{2-\beta}}\Bigr)
\Biggr\}
-\log(2\lambda_u)\\
&\qquad > n_{k_0}^Hc_1-\log(2\lambda_u)\\
&\qquad > n_{k_0}^Hc_0+\log(10^4)\\
&\qquad \ge 2n_{k_0}^T\log\Lambda_1+\log(10^4)
\end{align*}
for any integer $m\ge 0$ with a sufficiently large integer $k_0$.
We choose $\xi \in (0,1)$ so that $-\log\xi$ lies between the right-hand side and the left-hand side of the above inequality.
Since $n_k^H$ is increasing in $k$, the left-hand side of \eqref{eq:1004} increases and the correction term on the right-hand side decreases as $k$ increases. Hence, once \eqref{eq:1004} holds for $k=k_0$, it also holds for every $k\ge k_0$.
The second term in \eqref{eq:20251018b} is also bounded by $10^{-4}$ because of {\bf DH}.

\eqref{eq:20260314mono} holds because 
$
\frac{n_{2p+1}^H}{n_{2p}^H}\to\alpha>1$
and
$\frac{n_{2p+2}^H}{n_{2p+1}^H}\to\beta>1
$
from {\bf OE}.
\eqref{eq:20251101} follows from $\frac{n_k^T}{n_k^H}\to 0$ in {\bf DH}, and \eqref{eq:20260308} follows from $n_k^H\to \infty$.
\eqref{eq:lem53step3} holds because $\frac{7}{2}\alpha-2-1<\frac{7}{2}\alpha-2-\frac{1}{\alpha\beta}<\alpha$ by \eqref{001}.

Finally, we show that \eqref{eq:k0decay} holds for a sufficiently large even $k_0$.
Since
$
\frac{n_{2p+1}^H}{n_{2p}^H}\to \alpha$ and
$\frac{n_{2p+2}^H}{n_{2p+1}^H}\to \beta$ by {\bf OE},
 and since $\beta>\alpha>1$, after taking $k_0$ large enough we may assume that
\[
n_{k+1}^H\ge \frac{\alpha+1}{2}\,n_k^H
\qquad\text{for every }k\ge k_0.
\]
Hence
\[
n_{k+1}^H-n_k^H
\ge
\frac{\alpha-1}{\alpha+3}\,(n_{k+1}^H+n_k^H)
\qquad\text{for every }k\ge k_0.
\]
Therefore,
\begin{equation}\label{eq:0314b1}
\log\frac{\Lambda_u^{(m)}}{\Lambda_s^{(m)}}
=
\log\frac{\lambda_u}{\lambda_s}
\sum_{j=0}^{m-1}(-1)^{m-1-j}n_{k_0+j}^H
\ge
\frac{\alpha-1}{\alpha+3}\log\frac{\lambda_u}{\lambda_s}
\sum_{j=0}^{m-1}n_{k_0+j}^H
\end{equation}
for every $m\ge1$.
On the other hand, by {\bf DH}, taking $k_0$ larger if necessary, we may assume that for every $k\ge k_0$,
\begin{equation}\label{eq:0314}
2(\log\Lambda_1)n_k^T
\le
\frac{\alpha-1}{8\beta'(\alpha+3)}\Bigl(\log\frac{\lambda_u}{\lambda_s}\Bigr)n_k^H.
\end{equation}
Summing \eqref{eq:0314} from $k=k_0$ to $k_0+m-1$, we obtain
\begin{equation}\label{eq:0314b2}
2\log C_{f,m}
=
2(\log\Lambda_1)\sum_{j=0}^{m-1}n_{k_0+j}^T
\le
\frac{\alpha-1}{8\beta'(\alpha+3)}\log\frac{\lambda_u}{\lambda_s}
\sum_{j=0}^{m-1}n_{k_0+j}^H .
\end{equation}
Moreover,
after enlarging $k_0$ if necessary, we may assume that
\[
n_{k+1}^H\le \beta' n_k^H
\qquad\text{for every }k\ge k_0,
\]
so that
\[
n_{k_0+m}^H\le \beta' n_{k_0+m-1}^H
\le
\beta'\sum_{j=0}^{m-1}n_{k_0+j}^H.
\]
Therefore \eqref{eq:0314} gives that
\begin{equation}\label{eq:0314b3}
2(\log\Lambda_1)n_{k_0+m}^T
\le
\frac{\alpha-1}{8(\alpha+3)}\log\frac{\lambda_u}{\lambda_s}
\sum_{j=0}^{m-1}n_{k_0+j}^H .
\end{equation}
Also, since $n_{k_0+j}^H\ge n_{k_0}^H$ for all $j\ge0$, taking $k_0$ larger again if necessary, we get
\begin{equation}\label{eq:0314b4}
m\log 3+\log 12
\le
m\log 36
\le
m\cdot n_{k_0}^H\frac{\alpha-1}{4(\alpha+3)}\log\frac{\lambda_u}{\lambda_s}
\le
\frac{\alpha-1}{4(\alpha+3)}\log\frac{\lambda_u}{\lambda_s}
\sum_{j=0}^{m-1}n_{k_0+j}^H .
\end{equation}
Combining \eqref{eq:0314b1}, \eqref{eq:0314b2}, \eqref{eq:0314b3} and \eqref{eq:0314b4}, for every $m\ge1$,
\[
2\log C_{f,m}+2(\log\Lambda_1)n_{k_0+m}^T+m\log 3+\log 12
\le
\frac{\alpha-1}{2(\alpha+3)}\log\frac{\lambda_u}{\lambda_s}
\sum_{j=0}^{m-1}n_{k_0+j}^H
\le
\log\frac{\Lambda_u^{(m)}}{\Lambda_s^{(m)}}.
\]
Exponentiating both sides, we obtain \eqref{eq:k0decay}.

We will show that the interior of $U_{k_0,0}$ is the nonempty open set $U$
in Theorem~\ref{thm:main2} on which Lyapunov exponents fail to exist.
For readability, we suppress $k_0$ from the notation throughout the rest
of the paper.

\subsection{Notation}

Let $U:=\mathrm{int}(U_{k_0,0})$.
Fix $x\in U$ and set
\[
    x_m = F^{(m)}(x),\quad
    x_m' = F_{m-1}'\circ F^{(m-1)}(x)
\]
for $m\ge 1$,
where
\[
    F^{(m)} = F_{m-1}\circ\cdots\circ F_0,\quad
    F_l = f^{n_{k_0+l}},\quad
    F_l' = f^{n_{k_0+l}^H}.
\]
For $m\ge 1$ we also set
\[
    N_m = n_{k_0}+\cdots+n_{k_0+m-1},\quad
    N_m' = N_{m-1}+n_{k_0+m-1}^H,
\]
with $N_0=N_0'=0$.  Then
\[
    Df^{N_m}(x)=DF_{m-1}(x_{m-1})\cdots DF_0(x_0),
    \quad
    DF_l(x_l)=Df^{n_{k_0+l}^T}(x_{l+1}')DF_l'(x_l),
\]
We finally set 
\[
X_m=\zeta_{k_0+m}=F^{(m)}(\zeta_{k_0}),\quad
X_m'=F_{m-1}'\circ F^{(m-1)}(\zeta_{k_0}).
\]
By Lemma~\ref{lem:mrl}, both $x_m$ and $X_m$ lie in $U_{k_0,m}$.


Let $A_l$ be the representation matrix of $DF_l(x_l)$ with respect to $\{v^s,v^u\}$ (namely, the Jacobian matrix of $F_l$ at $x_l$ in the local coordinates on $K$), so that
$A^{(m)}:=A_{m-1}\cdots A_0$ represents $DF^{(m)}(x)$.
We factor 
\[
A_l=T_l H_l,
\]
where $H_l$ represents $DF_l'(x_l)$
and $T_l$ represents $Df^{n_{k_0+l}^T}(x_{l+1}')$ with respect to $\{v^s,v^u\}$.
Notice that all of $x_l, x'_{l+1}=f^{n_{k_0+l}^H}(x_l), x_{l+1}=f^{n_{k_0+l}^T}(x_{l+1}')$ are in $K$ by {\bf H} and Lemma~\ref{lem:mrl}, so that these representation matrices are well-defined.
By Lemma \ref{eq:Sec3-Usubset},
\begin{equation}\label{eq:1104ex}
    H_l=\begin{bmatrix}
        \lambda_s^{n_{k_0+l}^H} & 0\\
        0 & \lambda_u^{n_{k_0+l}^H}
    \end{bmatrix}.
\end{equation}

\subsection{Lower order terms in $A_l$}

Denote by $\hat T_l$ the matrix $Df^{n_{k_0+l}^T}(X_{l+1}')$.
By {\bf QT}, $Df^{n_{k_0+l}^T}(X_{l+1}')E^u= E^s$,
so the $(2,2)$-entry of $\hat T_l$ vanishes.
This gives real numbers
$a_l^{11}$, $a_l^{12}$, $a_l^{21}$ such that
\begin{equation}\label{eq:1012}
    \max_{ij}|a_l^{ij}|\le\tfrac{1}{2}\Lambda_1^{n_{k_0+l}^T},\quad
    \min\{|a_l^{12}|,|a_l^{21}|\}\ge 2\Lambda_1^{-n_{k_0+l}^T},\quad
    \hat T_l=\begin{bmatrix}a_l^{11}&a_l^{12}\\a_l^{21}&0\end{bmatrix}.
\end{equation}
Indeed, since $\widehat T_l$ is invertible,
we have $a_l^{12}a_l^{21}\neq 0$ and
\[
\widehat T_l^{-1}v^s=\binom{0}{1/a_l^{12}},
\qquad
\widehat T_l^{-1}v^u=\binom{1/a_l^{21}}{-\,a_l^{11}/(a_l^{12}a_l^{21})},
\]
and hence
\[
\|\widehat T_l^{-1}\|
\ge\max\bigl\{\|\widehat T_l^{-1}v^s\|,\ \|\widehat T_l^{-1}v^u\|\bigr\}
\ge\max\bigl\{|a_l^{12}|^{-1},\ |a_l^{21}|^{-1}\bigr\}.
\]
On the other hand,
\[
\|\widehat T_l^{-1}\|
=\bigl\|Df^{-n_{k_0+l}^T}\bigl(f^{n_{k_0+l}^T}(X_{l+1}')\bigr)\bigr\|
\le \Big(\sup_{\sigma=\pm1,\,x\in M}\|Df^\sigma(x)\|\Big)^{n_{k_0+l}^T}
\le \Lambda_0^{\,n_{k_0+l}^T}.
\]
Since $\Lambda_1\ge 2\Lambda_0^2\ge 2\Lambda_0$, we have $B^{-n_{k_0+l}^T}\ge 2\Lambda_1^{-n_{k_0+l}^T}$, and hence the lower bounds in \eqref{eq:1012} follow.
For the upper bounds we simply note that
$|a_l^{ij}| \le \|\widehat T_l\|\le \Lambda_0^{n_{k_0+l}^T}$.

Note that
\[
\operatorname{Lip}(Df^{n_{k_0+l}^T})\le \frac{L}{B(B-1)} B^{2n_{k_0+l}^T}.
\]
In fact, it holds that $\|Df^n\|_\infty\le B^{n}$ and $\operatorname{Lip}(Df^{n})\le\|Df\|_\infty\operatorname{Lip}(Df^{n-1})+\operatorname{Lip}(Df)\|Df^{n-1}\|_\infty^2$,
so
one can see $\operatorname{Lip}(Df^n)\le \frac{L}{B(B-1)} B^{2n}$ for every $n\ge 1$ by induction.
Hence,
\[
\|T_l-\hat T_l\|
=
\|Df^{n_{k_0+l}^T}(x'_{l+1})-Df^{n_{k_0+l}^T}(X'_{l+1})\|
\le
\frac{L}{B(B-1)} \Lambda_0^{2n_{k_0+l}^T}\|\tilde x'_{l+1}\|,
\]
where
\[
\tilde x'_{l+1}:=X'_{l+1}-x'_{l+1}.
\]
In particular, by the definition of $\Lambda_1$, for each $i,j\in\{1,2\}$,
\[
|(T_l)_{ij}-(\hat T_l)_{ij}|
\le
\frac{L}{B(B-1)} \Lambda_0^{2n_{k_0+l}^T}\|\tilde x'_{l+1}\| \le \Lambda_1^{n_{k_0+l}^T}\|\tilde x'_{l+1}\|
\]

Denoting $O_l(\|\cdot\|)$ for terms bounded by
$\Lambda_1^{n_{k_0+l}^T}\|\cdot\|$,
\begin{equation}\label{eqlem:4.1}
    T_l=\begin{bmatrix}
        a_l^{11}+O_l(\|\tilde x_{l+1}'\|) & a_l^{12}+O_l(\|\tilde x_{l+1}'\|)\\
        a_l^{21}+O_l(\|\tilde x_{l+1}'\|) & O_l(\|\tilde x_{l+1}'\|)
    \end{bmatrix}.
\end{equation}
Combining \eqref{eq:1104ex} and \eqref{eqlem:4.1}, we get
\begin{equation}\label{eq:20250927a}
    A_l=\begin{bmatrix}
        (a_l^{11}+O_l(\|\tilde x_{l+1}'\|))\lambda_s^{n_{k_0+l}^H}
          & (a_l^{12}+O_l(\|\tilde x_{l+1}'\|))\lambda_u^{n_{k_0+l}^H}\\
        (a_l^{21}+O_l(\|\tilde x_{l+1}'\|))\lambda_s^{n_{k_0+l}^H}
          & O_l(\|\tilde x_{l+1}'\|)\lambda_u^{n_{k_0+l}^H}
    \end{bmatrix}.
\end{equation}

\subsection{Estimate of the $(2,2)$-entry of $A_l$}

To obtain the oscillation of finite time Lyapunov exponents, it is important to see that the $(2,2)$-entry $ O_l(||\tilde x_{l+1}'||) \lambda_{u}^{n_{k_0+l}^H}$ of $A_l$ is of order $\lambda_{s}^{n_{k_0+l}^H}$.
The following is the key lemma for that purpose.

\begin{lem}\label{lem:5.3}
For every integer $l\ge 0$, we have
\[
    \|\tilde x_{l+1}'\|\lambda_u^{n_{k_0+l}^H}\le\xi\lambda_s^{n_{k_0+l}^H}.
\]
\end{lem}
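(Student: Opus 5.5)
The plan is to compare the two orbits $x_l$ and $X_l=\zeta_{k_0+l}$ during the hyperbolic stretch $F_l'=f^{n_{k_0+l}^H}$, using the explicit linear form of $f$ on $K$. By Lemma~\ref{lem:mrl} both points lie in $U_{k_0,l}$. By Lemma~\ref{eq:Sec3-Usubset} their first $n_{k_0+l}^H$ iterates stay in $K$, where $f(x,y)=(\lambda_s x,\lambda_u y)$. Writing $n=n_{k_0+l}^H$, this gives
\[
|(\tilde x_{l+1}')^s|\le 2\lambda_s^{n}\epsilon_{k_0,l}\sqrt{b_{k_0+l}},\qquad
|(\tilde x_{l+1}')^u|\le \tfrac12\lambda_u^{n}\epsilon_{k_0,l}b_{k_0+l},
\]
since the width and height of $U_{k_0,l}$ are $2\epsilon_{k_0,l}\sqrt{b_{k_0+l}}$ and $\frac12\epsilon_{k_0,l}b_{k_0+l}$. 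Hence
\[
\|\tilde x_{l+1}'\|\lambda_u^{n}\le \epsilon_{k_0,l}\bigl(2\lambda_s^n\lambda_u^n\sqrt{b_{k_0+l}}+\lambda_u^{2n}b_{k_0+l}\bigr),
\]
and it suffices to show that each of the two terms is at most $\frac{\xi}{2}\lambda_s^{n}$.

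\emph{Step 1: reduce to a bound on $\epsilon_{k_0,l}$.} I would use \eqref{010}, namely $b_k\le\lambda_u^{-\frac{6}{2-\alpha}n_k^H}$. This bounds $\lambda_u^{2n}b_{k_0+l}$ by $\lambda_u^{(2-\frac{6}{2-\alpha})n}$, which is at most $\lambda_u^{-\frac{7\alpha}{2}n}$. The last inequality needs $\frac{6}{2-\alpha}\ge 2+\frac{7\alpha}{2}$ for $\alpha\in(1,2)$, which is an elementary check. The other term satisfies $\lambda_u^n\sqrt{b_{k_0+l}}\le\lambda_u^{(1-\frac{3}{2-\alpha})n}$, which is even smaller than $\lambda_u^{-\frac{7\alpha}{2}n}$ up to the factor $\lambda_s^n\le 1$, or at worst comparable after using the same inequality. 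Both terms are therefore controlled once
\[
\epsilon_{k_0,l}\le \frac{\xi}{2\lambda_u}\Bigl(\lambda_s\lambda_u^{\frac{7\alpha}{2}}\Bigr)^{n_{k_0+l}^H}.
\]
The factor $\lambda_u$ in $2\lambda_u$ gives room to absorb the floor and rounding errors.

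\emph{Step 2: verify this bound from \eqref{eq:1004}, which I expect to be the main obstacle.} Taking logarithms and dividing by $n_{k_0}^H(\alpha\beta)^{\lfloor l/2\rfloor}$, the required inequality becomes
\[
\log\Bigl(\lambda_s\lambda_u^{\frac{9\beta-6+18(n_{k_0}^H)^{-1}}{2-\beta}}\Bigr)
\le
\frac{n_{k_0+l}^H}{n_{k_0}^H(\alpha\beta)^{\lfloor l/2\rfloor}}\log\Bigl(\lambda_s\lambda_u^{\frac{7\alpha}{2}}\Bigr)
-\frac{\log(2\lambda_u)-\log\xi}{n_{k_0}^H(\alpha\beta)^{\lfloor l/2\rfloor}}.
\]
Here I use $\epsilon_{k_0,l}=\epsilon_{k_0}^{(\alpha\beta)^{\lfloor l/2\rfloor}}$. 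Since $k_0$ is even, {\bf OE} gives
\[
n_{k_0+l}^H\approx n_{k_0}^H(\alpha\beta)^{\lfloor l/2\rfloor}\alpha^{\lceil l/2\rceil-\lfloor l/2\rfloor}
\]
up to an additive error of at most $1$. This additive error is exactly what the extra factor $\lambda_s^{(n_k^H)^{-1}}$ in \eqref{eq:1004} absorbs, because $\lambda_s\lambda_u^{7\alpha/2}>1$ by \eqref{001}.

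The delicate point is the case distinction on the parity of $l$. When $l$ is even, the ratio $n_{k_0+l}^H/(n_{k_0}^H(\alpha\beta)^{\lfloor l/2\rfloor})$ is essentially $1$. When $l$ is odd, it is essentially $\alpha$, and I would bound it from below by $\alpha\ge1$, or replace it with the weaker factor $(\alpha\beta)^{\lceil l/2\rceil-\lfloor l/2\rfloor}$ as written in \eqref{eq:1004}. This replacement also requires checking the direction of inequality given the sign of the logarithm, which is positive. With these estimates, \eqref{eq:1004} at $k=k_0$ and $m=l$ yields the displayed inequality, which completes the argument.
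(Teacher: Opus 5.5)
Your Step~1 is a valid reduction that differs from the paper's. The paper uses \eqref{006} with $\sqrt{b_{k_0+l+1}}\le\lambda_u^{-3n^H_{k_0+l+1}}$, followed by \eqref{eq:lem53step1}--\eqref{eq:lem53lower}. You instead use the finer upper bound in \eqref{010} and $7\alpha^2-10\alpha+4>0$ to get $\lambda_u^{2n}b_{k_0+l}\le\lambda_u^{-\frac{7\alpha}{2}n}$. The lemma then reduces to the single bound $\epsilon_{k_0,l}\le\frac{\xi}{2\lambda_u}(\lambda_s\lambda_u^{7\alpha/2})^{n^H_{k_0+l}}$. Your treatment of the cross term is misstated: $\lambda_u^{(1-\frac{3}{2-\alpha})n}$ is larger than $\lambda_u^{-\frac{7\alpha}{2}n}$, not smaller. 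The ratio bound from \eqref{eq:20260308}, as in the paper, disposes of it.

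The genuine gap is in Step~2. The positivity of $A:=\log(\lambda_s\lambda_u^{7\alpha/2})$ does not follow from \eqref{001}. Its last two inequalities only compare $A$ and $\alpha\beta A$ with the negative number $\log(\lambda_s\lambda_u^{\frac{9\beta-6}{2-\beta}})$. For instance, $\lambda_u=4$, $\lambda_s=10^{-10}$, $\alpha=1.0001$, $\beta=1.0002$ satisfy all of \eqref{001} with $A<0$. This is exactly the regime of Remark~\ref{rmk:0307} and of the renormalization in Section~\ref{Sec2}, where $\lambda_s\approx 0$ and $\lambda_u\approx 4$.

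Your odd-$l$ step hinges on this sign. Let $r=n^H_{k_0+l}/(n^H_{k_0}(\alpha\beta)^{\lfloor l/2\rfloor})\approx\alpha$. Bounding $r$ below by $1$ goes the wrong direction when $A<0$. Replacing $r$ by $\alpha\beta$ goes the wrong direction when $A>0$, and in that case \eqref{eq:1004} at $m=l$ is too weak. Also, only the lower bound on $n^H_{k_0+l}$ has additive error $1$. The upper bound is $n^H_{k_0+l}<(n^H_{k_0}+1)\alpha^\delta(\alpha\beta)^p$, where $p=\lfloor l/2\rfloor$ and $\delta=l-2p$.

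The repair is to split on the sign of $A$. Set $n=n^H_{k_0}$ and multiply \eqref{eq:1004} (at $k=k_0$ and any $m$ with $\lfloor m/2\rfloor=p$) by $n(\alpha\beta)^p$. Your target then follows from $n^H_{k_0+l}A\ge(\alpha\beta)^{\lceil m/2\rceil}(nA+\log\lambda_s)$.

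\emph{Case $A\le 0$.} Take $m=l$ and use the upper bound on $n^H_{k_0+l}$. The inequality reduces to $(n+1)\alpha^\delta A\ge(\alpha\beta)^\delta(nA+\log\lambda_s)$. For $\delta=0$ this is $A\ge\log\lambda_s$. For $\delta=1$ it is $(1-\beta)nA+(A-\beta\log\lambda_s)\ge 0$. Both hold.

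\emph{Case $A>0$.} Take $m=2p$ and use $n^H_{k_0+l}>n(\alpha\beta)^p-1$. The inequality reduces to $A\le(\alpha\beta)^p|\log\lambda_s|$. This holds because the second inequality of \eqref{001} gives $\alpha<6/5$, hence $\lambda_s^2\lambda_u^{7\alpha/2}<(\lambda_s\lambda_u^3)^2<1$.

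With this split your route closes, and it is shorter than the paper's: it bypasses \eqref{eq:lem53step3} and \eqref{eq:lem53lower}. The paper's route avoids any sign discussion. After \eqref{eq:lem53step3} lowers the $\lambda_u$-exponent from $\frac{7\alpha}{2}$ to $\alpha$, the only sign it needs is $\lambda_s\lambda_u^{\alpha}<1$, which is automatic.
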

\begin{proof}
Write
\[
x_l=(x_l^s,x_l^u),\qquad X_l=(X_l^s,X_l^u)
\]
in the coordinate of $K$.
Then
\[
\tilde x_{l+1}'
=f^{n_{k_0+l}^H}(X_l) - f^{n_{k_0+l}^H}(x_l)
=\big(\lambda_{s}^{n_{k_0+l}^H}(X_l^s-x_l^s),\ \lambda_{u}^{n_{k_0+l}^H}(X_l^u-x_l^u)\big).
\]
Since both $x_l$ and $X_l$ are in the rectangle $U_{k_0,l}$ with the width $2\epsilon_{k_0,l}\sqrt{b_{k_0+l}}$ and the height $\frac12\epsilon_{k_0,l}b_{k_0+l}$, we get
\[
|X_l^s-x_l^s|\le 2\epsilon_{k_0,l}\sqrt{b_{k_0+l}},
\qquad
|X_l^u-x_l^u|\le \frac12\epsilon_{k_0,l}b_{k_0+l}.
\]
Therefore,
\begin{align*}
\|\tilde x_{l+1}'\|\lambda_{u}^{n_{k_0+l}^H}
&\le \lambda_{u}^{n_{k_0+l}^H}\Big(\lambda_{s}^{n_{k_0+l}^H}|X_l^s-x_l^s|+\lambda_{u}^{n_{k_0+l}^H}|X_l^u-x_l^u|\Big)\\
&\le 2\epsilon_{k_0,l}\sqrt{b_{k_0+l}}\lambda_{s}^{n_{k_0+l}^H}\lambda_{u}^{n_{k_0+l}^H}+\frac12\epsilon_{k_0,l}b_{k_0+l}\lambda_{u}^{2n_{k_0+l}^H}.
\end{align*}
To compare these two terms, note that $\lambda_s\lambda_u^3<1$ implies
$\lambda_{s}^{n_{k_0+l}^H}/\lambda_{u}^{n_{k_0+l}^H}
\le \lambda_u^{-4n_{k_0+l}^H}$.
On the other hand, \eqref{010} gives a uniform lower bound
$\sqrt{b_{k_0+l}}\ge \lambda_u^{-\frac{3}{2-\beta'}n_{k_0+l}^H}$. 
Thus, it follows from \eqref{eq:20260308} that
\[
\frac{2\sqrt{b_{k_0+l}}\lambda_{s}^{n_{k_0+l}^H}\lambda_{u}^{n_{k_0+l}^H}}{\frac12 b_{k_0+l}\lambda_{u}^{2n_{k_0+l}^H}}
= \frac{4}{\sqrt{b_{k_0+l}}}\frac{\lambda_{s}^{n_{k_0+l}^H}}{\lambda_{u}^{n_{k_0+l}^H}}
\le 4\lambda_u^{\left(\frac{3}{2-\beta'}-4\right)n_{k_0+l}^H}\le 1.
\]
Consequently, 
 we have
\[
\|\tilde x_{l+1}'\|\lambda_{u}^{n_{k_0+l}^H}\le \epsilon_{k_0,l}b_{k_0+l}\lambda_{u}^{2n_{k_0+l}^H}.
\]

It remains to show that
\begin{equation}\label{eq:lem53goal}
\epsilon_{k_0,l}b_{k_0+l}\lambda_{u}^{2n_{k_0+l}^H} \le \xi\lambda_{s}^{n_{k_0+l}^H}.
\end{equation}
The proof of this inequality is essentially a repetition of the argument in \cite{KLNS2022r}.
However, since the notation is slightly different, and for the convenience of the reader, we give a complete proof here.
The remaining argument has two parts: first, we use \eqref{eq:1004} to extract the gain coming from $\epsilon_{k_0,l}$ (see \eqref{eq:lem53upper}); second, we use {\bf OE} together with the definition of $b_k$ to convert this into the factor $\lambda_s^{n_{k_0+l}^H}$ (see \eqref{eq:lem53lower}).

By \eqref{eq:1004} with $k=k_0$ and $m=l$,
\[
(\alpha\beta)^{\lceil\frac{l}{2}\rceil-\lfloor\frac{l}{2}\rfloor}
\log\Bigl(\lambda_s^{1+(n_{k_0}^H)^{-1}}\lambda_u^{\frac{7\alpha}{2}}\Bigr)
\ge
\log\Bigl(\lambda_s\lambda_u^{\frac{9\beta-6+18(n_{k_0}^H)^{-1}}{2-\beta}}\Bigr)
+(n_{k_0}^H)^{-1}(\alpha\beta)^{-\lfloor\frac{l}{2}\rfloor}
(\log(2\lambda_u)-\log\xi).
\]
Multiplying this inequality by $(\alpha\beta)^{\lfloor l/2\rfloor}$, we get
\[
(\alpha\beta)^{\lceil\frac{l}{2}\rceil}
\log\Bigl(\lambda_s^{1+(n_{k_0}^H)^{-1}}\lambda_u^{\frac{7\alpha}{2}}\Bigr)
+(n_{k_0}^H)^{-1}\log\xi
\ge
(\alpha\beta)^{\lfloor\frac{l}{2}\rfloor}
\log\Bigl(\lambda_s\lambda_u^{\frac{9\beta-6+18(n_{k_0}^H)^{-1}}{2-\beta}}\Bigr)
+(n_{k_0}^H)^{-1}\log(2\lambda_u),
\]
which is equivalent to
\[
\xi^{(n_{k_0}^H)^{-1}}
\Bigl(\lambda_s^{1+(n_{k_0}^H)^{-1}}\lambda_u^{\frac{7\alpha}{2}}\Bigr)^{(\alpha\beta)^{\lceil l/2\rceil}}
\ge
(2\lambda_u)^{(n_{k_0}^H)^{-1}}
\Bigl(\lambda_s\lambda_u^{\frac{9\beta-6+18(n_{k_0}^H)^{-1}}{2-\beta}}\Bigr)^{(\alpha\beta)^{\lfloor l/2\rfloor}}.
\]
Raising both sides to the power $n_{k_0}^H$, we obtain
\begin{equation}\label{eq:lem53step1}
\lambda_u^{-1}\xi
\Bigl(\lambda_s^{1+(n_{k_0}^H)^{-1}}\lambda_u^{\frac{7\alpha}{2}}\Bigr)^{n_{k_0}^H(\alpha\beta)^{\lceil l/2\rceil}}
\ge
2\Bigl(\lambda_s\lambda_u^{\frac{9\beta-6+18(n_{k_0}^H)^{-1}}{2-\beta}}\Bigr)^{n_{k_0}^H(\alpha\beta)^{\lfloor l/2\rfloor}}
=
2\epsilon_{k_0,l}.
\end{equation}

Since $k_0$ is even, the explicit formulas in {\bf OE} give
\[
n_{k_0+l}^H\ge n_{k_0}^H(\alpha\beta)^{\lfloor l/2\rfloor}-1
\qquad\text{and}\qquad
n_{k_0+l+1}^H\ge n_{k_0}^H(\alpha\beta)^{\lceil l/2\rceil}-1.
\]
Hence
\begin{equation}\label{eq:lem53step2}
\begin{split}
\lambda_u^{-n_{k_0+l}^H}
&\le
\Bigl(
\lambda_u^{-(\alpha\beta)^{\lfloor l/2\rfloor-\lceil l/2\rceil}
+(n_{k_0}^H)^{-1}(\alpha\beta)^{-\lceil l/2\rceil}}
\Bigr)^{n_{k_0}^H(\alpha\beta)^{\lceil l/2\rceil}},\\
\lambda_u^{-n_{k_0+l+1}^H}
&\le
\Bigl(
\lambda_u^{-1+(n_{k_0}^H)^{-1}(\alpha\beta)^{-\lceil l/2\rceil}}
\Bigr)^{n_{k_0}^H(\alpha\beta)^{\lceil l/2\rceil}}.
\end{split}
\end{equation}

From the construction \eqref{003} of $b_k$, it holds that
\[
b_k\le \lambda_u^{-6n_k^H}.
\]
Indeed, since $n_{k+i}^H\ge n_k^H$ for every $i\ge0$ by repeated use of \eqref{eq:20260314mono},
\[
\sum_{i=0}^{\infty}\frac{n_{k+i}^H}{2^i}
\ge
\sum_{i=0}^{\infty}\frac{n_k^H}{2^i}
=
2n_k^H.
\]
Therefore,
\[
\sqrt{b_{k_0+l+1}}\le \lambda_u^{-3n_{k_0+l+1}^H}.
\]
Using this together with \eqref{006}, \eqref{eq:lem53step3}, \eqref{eq:lem53step1}, and \eqref{eq:lem53step2}, we obtain
\refstepcounter{equation}\label{eq:lem53upper}
\begin{align*}
\epsilon_{k_0,l}b_{k_0+l}\lambda_u^{2n_{k_0+l}^H}
&=
\epsilon_{k_0,l}\lambda_u^{-n_{k_0+l}^H}\sqrt{b_{k_0+l+1}}
\\
&\le
\epsilon_{k_0,l}\lambda_u^{-n_{k_0+l}^H}\lambda_u^{-3n_{k_0+l+1}^H}
\\
&\le
\frac12\lambda_u^{-1}\xi
\Bigl(\lambda_s^{1+(n_{k_0}^H)^{-1}}\lambda_u^{\frac{7\alpha}{2}}\Bigr)^{n_{k_0}^H(\alpha\beta)^{\lceil l/2\rceil}}
\lambda_u^{-n_{k_0+l}^H}\lambda_u^{-3n_{k_0+l+1}^H}
\\
&=
\frac12\lambda_u^{-1}\xi
\Bigl(\lambda_s^{1+(n_{k_0}^H)^{-1}}\lambda_u^{\frac{7\alpha}{2}}\Bigr)^{n_{k_0}^H(\alpha\beta)^{\lceil l/2\rceil}}
\lambda_u^{-2n_{k_0+l+1}^H}\lambda_u^{-n_{k_0+l}^H}\lambda_u^{-n_{k_0+l+1}^H}
\\
&\le
\frac12\lambda_u^{-1}\xi
\Bigl(
\lambda_s^{1+(n_{k_0}^H)^{-1}}
\lambda_u^{\frac{7\alpha}{2}
-(\alpha\beta)^{\lfloor l/2\rfloor-\lceil l/2\rceil}
-2
+3(n_{k_0}^H)^{-1}(\alpha\beta)^{-\lceil l/2\rceil}}
\Bigr)^{n_{k_0}^H(\alpha\beta)^{\lceil l/2\rceil}}
\lambda_u^{-n_{k_0+l+1}^H}
\\
&<
\frac12\lambda_u^{-1}\xi
\Bigl(\lambda_s^{1+(n_{k_0}^H)^{-1}}\lambda_u^{\alpha}\Bigr)^{n_{k_0}^H(\alpha\beta)^{\lceil l/2\rceil}}
\lambda_u^{-n_{k_0+l+1}^H}
\\
&<
\xi\lambda_u^{-1}
\Bigl(\lambda_s^{1+(n_{k_0}^H)^{-1}}\lambda_u^{\alpha}\Bigr)^{n_{k_0}^H(\alpha\beta)^{\lceil l/2\rceil}}
\lambda_u^{-n_{k_0+l+1}^H}
\tag{\theequation}\raisetag{-1.2\baselineskip}
\end{align*}

We next show that
\begin{equation}\label{eq:lem53lower}
\lambda_u^{-1}
\Bigl(\lambda_s^{1+(n_{k_0}^H)^{-1}}\lambda_u^{\alpha}\Bigr)^{n_{k_0}^H(\alpha\beta)^{\lceil l/2\rceil}}
\lambda_u^{-n_{k_0+l+1}^H}
\le
\lambda_s^{n_{k_0+l}^H}.
\end{equation}

If $l=2p$, then the explicit formulas in {\bf OE} imply
\[
n_{k_0+2p}^H<(n_{k_0}^H+1)\alpha^p\beta^p
\qquad\text{and}\qquad
n_{k_0+2p+1}^H>n_{k_0}^H\alpha^{p+1}\beta^p-1.
\]
Hence
\[
\lambda_s^{n_{k_0+2p}^H}
>
\lambda_s^{(n_{k_0}^H+1)\alpha^p\beta^p},
\qquad
\lambda_u^{n_{k_0+2p+1}^H}
>
\lambda_u^{n_{k_0}^H\alpha^{p+1}\beta^p-1},
\]
and therefore
\[
\lambda_s^{n_{k_0+2p}^H}\lambda_u^{n_{k_0+2p+1}^H}
>
\lambda_u^{-1}
\Bigl(\lambda_s^{1+(n_{k_0}^H)^{-1}}\lambda_u^{\alpha}\Bigr)^{n_{k_0}^H\alpha^p\beta^p}
=
\lambda_u^{-1}
\Bigl(\lambda_s^{1+(n_{k_0}^H)^{-1}}\lambda_u^{\alpha}\Bigr)^{n_{k_0}^H(\alpha\beta)^p}.
\]

If $l=2p+1$, then the explicit formulas in {\bf OE} imply
\[
n_{k_0+2p+1}^H<(n_{k_0}^H+1)\alpha^{p+1}\beta^p
\]
and
\[
n_{k_0+2p+2}^H>n_{k_0}^H\alpha^{p+1}\beta^{p+1}-1
>n_{k_0}^H\alpha^{p+2}\beta^p-1.
\]
Hence
\[
\lambda_s^{n_{k_0+2p+1}^H}
>
\lambda_s^{(n_{k_0}^H+1)\alpha^{p+1}\beta^p},
\qquad
\lambda_u^{n_{k_0+2p+2}^H}
>
\lambda_u^{n_{k_0}^H\alpha^{p+2}\beta^p-1},
\]
so
\[
\lambda_s^{n_{k_0+2p+1}^H}\lambda_u^{n_{k_0+2p+2}^H}
>
\lambda_u^{-1}
\Bigl(\lambda_s^{1+(n_{k_0}^H)^{-1}}\lambda_u^{\alpha}\Bigr)^{n_{k_0}^H\alpha^{p+1}\beta^p}.
\]
Since
\[
\lambda_s^{1+(n_{k_0}^H)^{-1}}\lambda_u^{\alpha}
<
\lambda_s\lambda_u^{\alpha}
<
1,
\]
we further obtain
\[
\lambda_s^{n_{k_0+2p+1}^H}\lambda_u^{n_{k_0+2p+2}^H}
>
\lambda_u^{-1}
\Bigl(\lambda_s^{1+(n_{k_0}^H)^{-1}}\lambda_u^{\alpha}\Bigr)^{n_{k_0}^H\alpha^{p+1}\beta^{p+1}}.
\]

Therefore, in either case,
\[
\lambda_s^{n_{k_0+l}^H}
=
\Bigl(\lambda_s^{n_{k_0+l}^H}\lambda_u^{n_{k_0+l+1}^H}\Bigr)\lambda_u^{-n_{k_0+l+1}^H}
\ge
\lambda_u^{-1}
\Bigl(\lambda_s^{1+(n_{k_0}^H)^{-1}}\lambda_u^{\alpha}\Bigr)^{n_{k_0}^H(\alpha\beta)^{\lceil l/2\rceil}}
\lambda_u^{-n_{k_0+l+1}^H},
\]
which proves \eqref{eq:lem53lower}.

Combining \eqref{eq:lem53upper} and \eqref{eq:lem53lower}, we obtain \eqref{eq:lem53goal}. This completes the proof.
\end{proof}

In conclusion, by \eqref{eq:20251101}, \eqref{eq:1012}, \eqref{eq:20250927a} and Lemma~\ref{lem:5.3},
for any $l\ge 0$,
\begin{equation}\label{eq:0412}
    A_l=\begin{bmatrix}
        b_l^{11}\lambda_s^{n_{k_0+l}^H} & b_l^{12}\lambda_u^{n_{k_0+l}^H}\\
        b_l^{21}\lambda_s^{n_{k_0+l}^H} & b_l^{22}\lambda_s^{n_{k_0+l}^H}
    \end{bmatrix},
\end{equation}
where the $b_l^{ij}$ satisfy
\begin{equation}\label{eq:1004a}
    \max_{ij}|b_l^{ij}|\le\Lambda_1^{n_{k_0+l}^T},\quad
    |b_l^{22}|\le\xi\Lambda_1^{n_{k_0+l}^T},\quad
    \min\{|b_l^{12}|,|b_l^{21}|\}\ge\Lambda_1^{-n_{k_0+l}^T}.
\end{equation}

\section{Asymptotic behavior of the product of the Jacobian matrices}\label{s:6}

In this section, we investigate the asymptotic behavior of $A^{(m)}$.
Denote by $a_{ij}^{(m)}$ the $(i,j)$-entry of $A^{(m)}$ for $i,j\in\{1,2\}$.
Recall $C_{f,m}$, $\Lambda_{s}^{(m)}$ and $\Lambda_{u}^{(m)}$ in Section~\ref{ss:k0}, and set
\[
C_{ij}^m=
\begin{cases}
a^{(m)}_{ij}(\Lambda_{u}^{(m)})^{-1} \quad &(\text{$ij\in \{11,12\}$}),\\[2mm]
a^{(m)}_{ij}(\Lambda_{s}^{(m)})^{-1} \quad &(\text{$ij\in \{21,22\}$})
\end{cases}
\]
for $i,j \in \{1,2\}$.
Then
\[
A^{(m)}=
\begin{bmatrix}
C_{11}^m \Lambda_{u}^{(m)} & C_{12}^m \Lambda_{u}^{(m)}\\
C_{21}^m \Lambda_{s}^{(m)} & C_{22}^m \Lambda_{s}^{(m)}
\end{bmatrix}.
\]
The point is that the asymptotic behavior of $A^{(m)}$ is essentially determined by
$\Lambda_{u}^{(m)}$ and $\Lambda_{s}^{(m)}$, while the coefficients $C_{ij}^m$ remain under control.

\begin{lem}\label{lem:250404a}
For every odd integer $m\ge 1$, 
\begin{equation}\label{eq:1221-odd}
\left(1-\left(\frac{1}{3}+\frac{1}{3^2}+\cdots +\frac{1}{3^{m}}\right)\right)C_{f,m}^{-1}
\le |C_{ij}^m|
\le
\left(1+\frac{1}{3}+\frac{1}{3^2}+\cdots +\frac{1}{3^{m}}\right)C_{f,m}
\end{equation}
for each $ij\in\{12,21\}$, and
\begin{equation}\label{eq:1221-odd-ratio}
\max\left\{
\frac{|C_{11}^m|}{|C_{12}^m|},
\frac{|C_{22}^m|}{|C_{21}^m|}
\right\}
\le
\frac{1}{3}+\frac{1}{3^2}+\cdots +\frac{1}{3^{m}}.
\end{equation}

For every even integer $m\ge 1$, 
\begin{equation}\label{eq:1221-even}
\left(1-\left(\frac{1}{3}+\frac{1}{3^2}+\cdots +\frac{1}{3^{m}}\right)\right)C_{f,m}^{-1}
\le |C_{ij}^m|
\le
\left(1+\frac{1}{3}+\frac{1}{3^2}+\cdots +\frac{1}{3^{m}}\right)C_{f,m}
\end{equation}
for each $ij\in\{11,22\}$, and
\begin{equation}\label{eq:1221-even-ratio}
\max\left\{
\frac{|C_{12}^m|}{|C_{11}^m|},
\frac{|C_{21}^m|}{|C_{22}^m|}
\right\}
\le
\frac{1}{3}+\frac{1}{3^2}+\cdots +\frac{1}{3^{m}}.
\end{equation}

In particular, for every $m\ge 1$ and $i,j\in\{1,2\}$,
\[
|C_{ij}^m|<2C_{f,m}.
\]
\end{lem}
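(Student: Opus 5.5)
The proof goes by induction on $m$, writing $A^{(m+1)}=A_mA^{(m)}$ and tracking the four normalized coefficients $C_{ij}^{m+1}$. Throughout, set $S_m:=\frac13+\cdots+\frac1{3^m}$ and write $n:=n_{k_0+m}^H$ and $n^T:=n_{k_0+m}^T$. Note that $C_{f,m+1}=C_{f,m}\Lambda_1^{n^T}$.

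\emph{Base case $m=1$.} Here $A^{(1)}=A_0$, with $\Lambda_u^{(1)}=\lambda_u^{n_{k_0}^H}$ and $\Lambda_s^{(1)}=\lambda_s^{n_{k_0}^H}$. From \eqref{eq:0412} we read off
\[
C^1_{12}=b_0^{12},\qquad C^1_{21}=b_0^{21},\qquad C^1_{22}=b_0^{22},\qquad C^1_{11}=b_0^{11}(\lambda_s/\lambda_u)^{n_{k_0}^H}.
\]
Then \eqref{eq:1004a} gives $\Lambda_1^{-n_{k_0}^T}\le|C^1_{12}|,|C^1_{21}|\le\Lambda_1^{n_{k_0}^T}$, which lies inside the interval $[(1-S_1)C_{f,1}^{-1},(1+S_1)C_{f,1}]$. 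For the ratios, \eqref{eq:1004a} together with \eqref{eq:20251018b} bounds both $|C^1_{11}|/|C^1_{12}|$ and $|C^1_{22}|/|C^1_{21}|$ by $\max\{(\lambda_s/\lambda_u)^{n_{k_0}^H}\Lambda_1^{2n_{k_0}^T},\ \xi\Lambda_1^{2n_{k_0}^T}\}\le10^{-4}<\frac13$.

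\emph{Inductive step.} The key observation is that $\Lambda_u^{(m+1)}=\lambda_u^{n}\Lambda_s^{(m)}$ and $\Lambda_s^{(m+1)}=\lambda_s^{n}\Lambda_u^{(m)}$. Multiplying out $A_mA^{(m)}$ with \eqref{eq:0412} gives exact formulas of the form "leading term plus ratio-weighted term". Writing $\rho_{m+1}:=\Lambda_s^{(m+1)}/\Lambda_u^{(m+1)}$, they are
\begin{align*}
C_{11}^{m+1}&=b_m^{12}C^m_{21}+b_m^{11}C^m_{11}\,\rho_{m+1}, &
C_{12}^{m+1}&=b_m^{12}C^m_{22}+b_m^{11}C^m_{12}\,\rho_{m+1},\\
C_{22}^{m+1}&=b_m^{21}C^m_{12}+b_m^{22}C^m_{22}\,\frac{\Lambda_s^{(m)}}{\Lambda_u^{(m)}}, &
C_{21}^{m+1}&=b_m^{21}C^m_{11}+b_m^{22}C^m_{21}\,\frac{\Lambda_s^{(m)}}{\Lambda_u^{(m)}}.
\end{align*}
The weights $\rho_{m+1}$ and $\Lambda_s^{(m)}/\Lambda_u^{(m)}$ are both controlled by \eqref{eq:k0decay}. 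That bound is exactly tuned to absorb the factors involved: $|b_m^{ij}|\le\Lambda_1^{n^T}$, $|C^m_{ij}|<2C_{f,m}$, and the division by lower bounds of size $C_{f,m+1}^{-1}$. As a result, each ratio-weighted term is at most about $\frac{1}{6\cdot3^{m+1}}C_{f,m+1}^{-1}$ (resp. $C_{f,m}^{-1}$ in the $m$-indexed case).

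Consider first $m$ odd, passing to $m+1$ even. The dominant coefficients are $C^{m+1}_{11}\approx b_m^{12}C^m_{21}$ and $C^{m+1}_{22}\approx b_m^{21}C^m_{12}$. Using the inductive bounds \eqref{eq:1221-odd} and \eqref{eq:1004a}, we get
\[
|C^{m+1}_{11}|\ge\Lambda_1^{-n^T}(1-S_m)C_{f,m}^{-1}-\text{(error)}\ge(1-S_{m+1})C_{f,m+1}^{-1},
\]
and the upper bound $(1+S_{m+1})C_{f,m+1}$ follows in the same way. For the ratio, the leading part of $C^{m+1}_{12}$ is $b_m^{12}C^m_{22}$. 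Dividing by the leading part $b_m^{12}C^m_{21}$ of $C^{m+1}_{11}$, the factor $b_m^{12}$ cancels, and \eqref{eq:1221-odd-ratio} gives the bound $S_m$. The error terms add at most $\frac1{3^{m+1}}$, so the ratio is at most $S_{m+1}$. The $(2,2)$ versus $(2,1)$ ratio is handled symmetrically, using $|C^m_{11}|/|C^m_{12}|\le S_m$. The step from $m$ even to $m+1$ odd is identical with the roles of the diagonal and off-diagonal entries swapped. The final claim $|C^m_{ij}|<2C_{f,m}$ follows because $1+S_m<\frac32$, and because the non-dominant coefficients are at most $S_m<\frac12$ times the dominant ones.

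The main obstacle is the bookkeeping of the error terms. One must check that the cancellation of $b_m^{12}$ (resp. $b_m^{21}$) in the ratio estimate really works, since only the lower bound $|b_m^{12}|\ge\Lambda_1^{-n^T}$ is available. One must also confirm that the constant $12\cdot3^m C_{f,m}^2\Lambda_1^{2n^T}$ in \eqref{eq:k0decay} suffices simultaneously for the lower-bound estimates and the ratio estimates. Before closing the induction, I would first write out this inequality chain explicitly for the $(1,1)$ entry.
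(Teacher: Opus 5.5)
Your plan follows the paper's proof. It uses the same induction on $m$, the same base case via \eqref{eq:1004a} and \eqref{eq:20251018b}, the same four recurrences, error terms controlled by \eqref{eq:k0decay}, and the same ratio argument. The cancellation you worry about is exact, because both leading terms carry the same factor $b_m^{12}$ (resp.\ $b_m^{21}$). The lower bound $|b_m^{12}|\ge\Lambda_1^{-n^T}$ is only needed to show that the leading term of the denominator is at least $(1-S_m)C_{f,m+1}^{-1}>\frac12 C_{f,m+1}^{-1}$, which is what the errors are measured against. Your bound $\frac{1}{6\cdot 3^{m+1}}C_{f,m+1}^{-1}$ is correct for the first-row error terms weighted by $\Lambda_s^{(m+1)}/\Lambda_u^{(m+1)}$.

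\textbf{Gap: the second-row error terms.} These are $b_m^{22}C^m_{2j}\,\Lambda_s^{(m)}/\Lambda_u^{(m)}$, and as written your bookkeeping does not close for them.
\begin{enumerate}
\item Here \eqref{eq:k0decay} is applied at index $m$, so it only supplies $3^{-m}$, not $3^{-(m+1)}$.
\item Its extra factor $\Lambda_1^{-2n^T_{k_0+m}}$ is what turns $C_{f,m}^{-1}$ into $C_{f,m+1}^{-1}$. A bound in terms of $C_{f,m}^{-1}$, as your parenthetical suggests, would be useless against leading terms of size $C_{f,m+1}^{-1}$.
\item Using only $|b_m^{22}|\le\Lambda_1^{n^T}$ and $|C^m_{ij}|<2C_{f,m}$, you get $\frac{1}{6\cdot 3^{m}}C_{f,m+1}^{-1}$. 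This is three times your claimed bound.
\item Set $t:=3^{-(m+1)}$. Your ingredients then only give a relative error $x\le t/(2(1-S_m))$.
\item The ratio step needs $(S_m+x)/(1-x)\le S_{m+1}$, i.e.\ $x(1+S_{m+1})\le t$. But $\frac{1+S_{m+1}}{2(1-S_m)}>1$ for every $m\ge1$, since $S_{m+1}+2S_m=\frac32-\frac76 3^{-m}>1$.
\end{enumerate}
So the estimate for $|C^{m+1}_{21}|/|C^{m+1}_{22}|$ (resp.\ $|C^{m+1}_{22}|/|C^{m+1}_{21}|$) cannot be obtained this way.

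\textbf{Fix.} The missing ingredient is the factor $\xi$ in $|b_m^{22}|\le\xi\Lambda_1^{n^T}$ from \eqref{eq:1004a}, with $\xi<\frac13$ by \eqref{eq:20251018b}. This recovers the lost factor $\frac13$, and it is exactly what the paper does in its inductive step. With it, every error term is at most $\frac{1}{6\cdot 3^{m+1}}C_{f,m+1}^{-1}$. Then $x<t/3$ and $x(1+S_{m+1})<t/2$, so your constants close the induction, even with $2C_{f,m}$ in place of the paper's $\frac32 C_{f,m}$.
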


\begin{proof}
For $m=1$, we have
\[
A^{(1)}=A_0=
\begin{bmatrix}
b_0^{11}\lambda_s^{n_{k_0}^H} & b_0^{12}\lambda_u^{n_{k_0}^H}\\
b_0^{21}\lambda_s^{n_{k_0}^H} & b_0^{22}\lambda_s^{n_{k_0}^H}
\end{bmatrix},
\]
so
\[
C_{11}^1=b_0^{11}\Bigl(\frac{\lambda_s}{\lambda_u}\Bigr)^{n_{k_0}^H},
\qquad
C_{12}^1=b_0^{12},
\qquad
C_{21}^1=b_0^{21},
\qquad
C_{22}^1=b_0^{22}.
\]
By \eqref{eq:1004a},
\[
C_{f,1}^{-1}\le |C_{ij}^1|\le C_{f,1}\quad (ij\in\{12,21\}).
\]
Also,
\[
\frac{|C_{11}^1|}{|C_{12}^1|}
=
\Bigl(\frac{\lambda_s}{\lambda_u}\Bigr)^{n_{k_0}^H}
\left|\frac{b_0^{11}}{b_0^{12}}\right|
\le
\Lambda_1^{2n_{k_0}^T}
\Bigl(\frac{\lambda_s}{\lambda_u}\Bigr)^{n_{k_0}^H}
\le 10^{-4}
\]
by \eqref{eq:1004a} and \eqref{eq:20251018b}. Similarly,
\[
\frac{|C_{22}^1|}{|C_{21}^1|}
=
\left|\frac{b_0^{22}}{b_0^{21}}\right|
\le
\xi \Lambda_1^{2n_{k_0}^T}
\le 10^{-4}
\]
by \eqref{eq:1004a} and \eqref{eq:20251018b}. 
Therefore, \eqref{eq:1221-odd} and \eqref{eq:1221-odd-ratio} hold for $m=1$.

Assume now that the desired estimates hold for $m=l-1$ with an integer $l\ge 2$.
Since
\[
A^{(l)}=A_{l-1}A^{(l-1)},
\quad
\Lambda_u^{(l)}=\lambda_u^{n_{k_0+l-1}^H}\Lambda_s^{(l-1)},
\quad
\Lambda_s^{(l)}=\lambda_s^{n_{k_0+l-1}^H}\Lambda_u^{(l-1)},
\]
it follows from \eqref{eq:0412} that
\begin{align*}
&C_{11}^{l}
=b_{l-1}^{11}C_{11}^{l-1}\frac{\Lambda_s^{(l)}}{\Lambda_u^{(l)}}+b_{l-1}^{12}C_{21}^{l-1},\quad
C_{12}^{l}
=b_{l-1}^{11}C_{12}^{l-1}\frac{\Lambda_s^{(l)}}{\Lambda_u^{(l)}}+b_{l-1}^{12}C_{22}^{l-1},\\
&C_{21}^{l}
=b_{l-1}^{21}C_{11}^{l-1}+b_{l-1}^{22}C_{21}^{l-1}\frac{\Lambda_s^{(l-1)}}{\Lambda_u^{(l-1)}},\quad
C_{22}^{l}
=b_{l-1}^{21}C_{12}^{l-1}+b_{l-1}^{22}C_{22}^{l-1}\frac{\Lambda_s^{(l-1)}}{\Lambda_u^{(l-1)}}.
\end{align*}
Since
$
\frac{1}{3}+\frac{1}{3^2}+\cdots +\frac{1}{3^{l-1}}<\frac12,
$
the inductive hypothesis implies
\[
|C_{ij}^{\,l-1}|<\frac32\,C_{f,l-1}
\]
for all $i,j\in\{1,2\}$.
Indeed, if $l-1$ is odd, then
\[
|C_{ij}^{\,l-1}|
\le
\left(1+\frac{1}{3}+\cdots +\frac{1}{3^{l-1}}\right)C_{f,l-1}
<
\frac32\,C_{f,l-1}
\]
for $ij\in\{12,21\}$,
and
\[
|C_{11}^{\,l-1}|
\le
\left(\frac{1}{3}+\cdots +\frac{1}{3^{l-1}}\right)|C_{12}^{\,l-1}|
<
\frac12\cdot\frac32\,C_{f,l-1}
<
\frac32\,C_{f,l-1},
\]
\[
|C_{22}^{\,l-1}|
\le
\left(\frac{1}{3}+\cdots +\frac{1}{3^{l-1}}\right)|C_{21}^{\,l-1}|
<
\frac12\cdot\frac32\,C_{f,l-1}
<
\frac32\,C_{f,l-1}.
\]
If $l-1$ is even, the same argument with the roles of
$(C_{11}^{\,l-1},C_{22}^{\,l-1})$ and $(C_{12}^{\,l-1},C_{21}^{\,l-1})$
interchanged gives again the claimed estimate.

Hence \eqref{eq:1004a} and \eqref{eq:k0decay} imply
\[
\left|b_{l-1}^{\mu\nu}C_{ij}^{\,l-1}\frac{\Lambda_s^{(l)}}{\Lambda_u^{(l)}}\right|
\le
\Lambda_1^{n_{k_0+l-1}^T}\cdot \frac32\,C_{f,l-1}\cdot
\frac{1}{12\cdot 3^l\,C_{f,l}^2}
=
\frac{1}{8\cdot 3^l}\,C_{f,l}^{-1}
\]
for all relevant indices, because
$
C_{f,l}=\Lambda_1^{n_{k_0+l-1}^T}C_{f,l-1}.
$
Moreover, \eqref{eq:20251018b} implies $\xi<1/3$, and therefore
\begin{align*}
\left|b_{l-1}^{22}C_{ij}^{\,l-1}\frac{\Lambda_s^{(l-1)}}{\Lambda_u^{(l-1)}}\right|
&\le
\xi\Lambda_1^{n_{k_0+l-1}^T}\cdot \frac32\,C_{f,l-1}\cdot
\frac{1}{12\cdot 3^{l-1}\,C_{f,l-1}^2\Lambda_1^{2n_{k_0+l-1}^T}}\\
&=
\frac{\xi}{8\cdot 3^{l-1}\Lambda_1^{n_{k_0+l-1}^T}C_{f,l-1}}\\
&=
\frac{\xi}{8\cdot 3^{l-1}}\,C_{f,l}^{-1}
\le
\frac{1}{8\cdot 3^l}\,C_{f,l}^{-1}
\end{align*}
for all relevant indices.

Suppose first that $l$ is odd. Then $l-1$ is even, so it follows from the above observations that
\begin{align*}
|C_{12}^{l}|
&\le
\frac{1}{8\cdot 3^l}C_{f,l}^{-1}
+\Lambda_1^{n_{k_0+l-1}^T}
\left(1+\frac{1}{3}+\cdots +\frac{1}{3^{l-1}}\right)C_{f,l-1}\\
&\le
\left(1+\frac{1}{3}+\cdots +\frac{1}{3^{l}}\right)C_{f,l},
\end{align*}
because $C_{f,l}^{-1}\le C_{f,l}$ and $C_{f,l}=\Lambda_1^{n_{k_0+l-1}^T}C_{f,l-1}$. Also,
\begin{align*}
|C_{12}^{l}|
&\ge
-\frac{1}{8\cdot 3^l}C_{f,l}^{-1}
+\Lambda_1^{-n_{k_0+l-1}^T}
\left(1-\left(\frac{1}{3}+\cdots +\frac{1}{3^{l-1}}\right)\right)C_{f,l-1}^{-1}\\
&\ge
\left(1-\left(\frac{1}{3}+\cdots +\frac{1}{3^{l}}\right)\right)C_{f,l}^{-1}.
\end{align*}
Exactly the same argument gives
\[
\left(1-\left(\frac{1}{3}+\cdots +\frac{1}{3^{l}}\right)\right)C_{f,l}^{-1}
\le |C_{21}^{l}|
\le
\left(1+\frac{1}{3}+\cdots +\frac{1}{3^{l}}\right)C_{f,l}.
\]
Hence \eqref{eq:1221-odd} holds for $m=l$.

We next prove the ratio estimate \eqref{eq:1221-odd-ratio}.
Let
\[
L_a=b_{l-1}^{11}C_{11}^{\,l-1}\frac{\Lambda_s^{(l)}}{\Lambda_u^{(l)}},
\quad
R_a=b_{l-1}^{12}C_{21}^{\,l-1},\quad
L_b=b_{l-1}^{11}C_{12}^{\,l-1}\frac{\Lambda_s^{(l)}}{\Lambda_u^{(l)}},
\quad
R_b=b_{l-1}^{12}C_{22}^{\,l-1}.
\]
Then
\[
\frac{|R_a|}{|R_b|}
=
\frac{|C_{21}^{\,l-1}|}{|C_{22}^{\,l-1}|}
\le
\frac{1}{3}+\frac{1}{3^2}+\cdots +\frac{1}{3^{l-1}}
\]
by \eqref{eq:1221-even-ratio} in the inductive hypothesis. On the other hand,
\[
|L_a|,\ |L_b|
\le
\frac{1}{8\cdot 3^l}C_{f,l}^{-1},
\qquad
|R_b|
\ge
\left(1-\left(\frac{1}{3}+\cdots +\frac{1}{3^{l-1}}\right)\right)C_{f,l}^{-1}
>
\frac12\,C_{f,l}^{-1},
\]
so
\[
\frac{|L_a|}{|R_b|},\ \frac{|L_b|}{|R_b|}
\le
\frac{1}{4\cdot 3^l}.
\]
Therefore
\begin{align*}
\frac{|C_{11}^{l}|}{|C_{12}^{l}|}
=
\frac{|L_a+R_a|}{|L_b+R_b|}
\le
\frac{|L_a|+|R_a|}{|R_b|-|L_b|}
\le
\frac{\frac{1}{4\cdot 3^l}
+\left(\frac{1}{3}+\cdots +\frac{1}{3^{l-1}}\right)}
{1-\frac{1}{4\cdot 3^l}}
\le
\frac{1}{3}+\frac{1}{3^2}+\cdots +\frac{1}{3^{l}}
\end{align*}
since
$
\frac{1}{4\cdot 3^l}
(
1+\frac{1}{3}+\cdots +\frac{1}{3^{l-1}}+\frac{1}{3^l}
)
\le \frac{1}{3^l}.
$
The proof of
$
\frac{|C_{22}^{l}|}{|C_{21}^{l}|}
\le
\frac{1}{3}+\frac{1}{3^2}+\cdots +\frac{1}{3^{l}}
$
is entirely similar. Thus \eqref{eq:1221-odd-ratio}
holds for $m=l$.

Suppose next that $l$ is even. Then noticing that $l-1$ is odd, 
and repeating the same argument, with the roles of
$(C_{11}^{\,l-1},C_{22}^{\,l-1})$ and $(C_{12}^{\,l-1},C_{21}^{\,l-1})$
interchanged, we obtain
\eqref{eq:1221-even} and \eqref{eq:1221-even-ratio}
for $m=l$.
This completes the induction and the proof.
\end{proof}

Fix $C_0\in (1,2)$ and let
\[
V:=
\left\{
(c^s,c^u)\in \mathbb R^2\setminus\{0\}
\ \middle|\ 
C_0^{-1}< \frac{|c^s|}{|c^u|}< C_0
\right\}.
\]
Then
\[
\min\left\{|c^s|-\frac12|c^u|,\ |c^u|-\frac12|c^s|\right\}
\ge \frac{2-C_0}{2C_0}|c^s|
\qquad ((c^s,c^u)\in V).
\]
Now fix $x\in U_{k_0,0}$ and $c=(c_0^s,c_0^u)\in V$.
Set
\[
c'=(c_m^s,c_m^u):=A^{(m)}c,
\qquad
v:=c_0^s v^s+c_0^u v^u,
\qquad
v':=c_m^s v^s+c_m^u v^u.
\]
Then $Df^{N_m}(x)v=v'$ because both $x$ and $f^{N_m}(x)$ are in $K$.
For an odd integer $m\ge 1$, \eqref{eq:1221-odd} and \eqref{eq:1221-odd-ratio} in Lemma~\ref{lem:250404a} give
\[
|c_m^s|
=\Lambda_u^{(m)}|C_{11}^m c_0^s+C_{12}^m c_0^u|
\ge \Lambda_u^{(m)}|C_{12}^m|\left(|c_0^u|-\frac12|c_0^s|\right),
\]
and hence
\[
|c_m^s|
\ge
\frac{13(2-C_0)}{54C_0}C_{f,m}^{-1}\Lambda_u^{(m)}|c_0^s|.
\]
The same inequality for even integers $m\ge 1$ follows from \eqref{eq:1221-even} and \eqref{eq:1221-even-ratio}.
On the other hand, Lemma~\ref{lem:250404a} also gives
\[
|c_m^u|
=\Lambda_s^{(m)}|C_{21}^m c_0^s+C_{22}^m c_0^u|
\le 2(1+C_0)C_{f,m}\Lambda_s^{(m)}|c_0^s|.
\]
Therefore
\[
\|Df^{N_m}(x)v\|
=\|v'\|
\ge
\frac{13(2-C_0)}{54C_0}C_{f,m}^{-1}\Lambda_u^{(m)}|c_0^s|
-2(1+C_0)C_{f,m}\Lambda_s^{(m)}|c_0^s|.
\]
On the other hand, the ratio $\Lambda_s^{(m)}/\Lambda_u^{(m)}$ is bounded by $1$, so we have
\[
\|Df^{N_m}(x)v\|
\le
2(1+C_0)C_{f,m}(\Lambda_u^{(m)}+\Lambda_s^{(m)})|c_0^s|\le 4(1+C_0) C_{f,m}\Lambda_u^{(m)}|c_0^s|.
\]
Thus, by \eqref{eq:k0decay},
for all sufficiently large $m$ we have
\begin{equation}\label{eq:0308d}
\frac{13(2-C_0)}{2\cdot 54C_0}C_{f,m}^{-1}\Lambda_u^{(m)}|c_0^s|
\le
\|Df^{N_m}(x)v\|
\le
4(1+C_0) C_{f,m}\Lambda_u^{(m)}|c_0^s|.
\end{equation}

\begin{lem}\label{lem:250404b}
It holds that
\[
\lim_{m\to\infty}\frac{\log\Lambda_u^{(2m)}}{N_{2m}}
=
\frac{\alpha\log\lambda_u+\log\lambda_s}{1+\alpha}
<
\frac{\beta\log\lambda_u+\log\lambda_s}{1+\beta}
=
\lim_{m\to\infty}\frac{\log\Lambda_u^{(2m-1)}}{N_{2m-1}}.
\]
\end{lem}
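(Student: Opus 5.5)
The plan is to compute both limits directly from the explicit formulas in {\bf OE}, using that $k_0$ is even, and then compare the two constants by a monotonicity argument. Write $k_0=2p_0$ and $c:=n_0^H(\alpha\beta)^{p_0}$. Then by {\bf OE},
\[
n_{k_0+2j}^H=c(\alpha\beta)^j+O(1),\qquad n_{k_0+2j+1}^H=\alpha c(\alpha\beta)^j+O(1).
\]
Throughout, $O(1)$ denotes an error bounded by $1$ coming from the floor functions.

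\emph{Even case.} By definition,
\[
\log\Lambda_u^{(2m)}=\sum_{j=0}^{m-1}\bigl(n_{k_0+2j+1}^H\log\lambda_u+n_{k_0+2j}^H\log\lambda_s\bigr)
=c\,(\alpha\log\lambda_u+\log\lambda_s)\sum_{j=0}^{m-1}(\alpha\beta)^j+O(m).
\]
For the denominator, write $N_{2m}=\sum_{j<2m}n_{k_0+j}^H+\sum_{j<2m}n_{k_0+j}^T$. The first sum equals $c(1+\alpha)\sum_{j<m}(\alpha\beta)^j+O(m)$.

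The second sum is $o\bigl(\sum_{j<2m}n_{k_0+j}^H\bigr)$. This follows from {\bf DH}: we have $n_k^T/n_k^H\to0$ and $\sum_j n_{k_0+j}^H\to\infty$, so the Stolz--Ces\`aro theorem applies (equivalently, split the sum at a large index).

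Since $\sum_{j<m}(\alpha\beta)^j\ge(\alpha\beta)^{m-1}$ grows exponentially, the $O(m)$ errors are negligible. Hence the ratio tends to $(\alpha\log\lambda_u+\log\lambda_s)/(1+\alpha)$.

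\emph{Odd case.} Here
\[
\log\Lambda_u^{(2m-1)}=n_{k_0}^H\log\lambda_u+\sum_{j=1}^{m-1}\bigl(n_{k_0+2j}^H\log\lambda_u+n_{k_0+2j-1}^H\log\lambda_s\bigr).
\]
Each bracket equals
\[
c(\alpha\beta)^{j-1}\bigl(\alpha\beta\log\lambda_u+\alpha\log\lambda_s\bigr)+O(1)=\alpha c(\alpha\beta)^{j-1}(\beta\log\lambda_u+\log\lambda_s)+O(1).
\]
Similarly, the hyperbolic part of $N_{2m-1}$ is $n_{k_0}^H+\alpha c(1+\beta)\sum_{j=1}^{m-1}(\alpha\beta)^{j-1}+O(m)$. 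The isolated first term $n_{k_0}^H$, the $O(m)$ errors and the tangential times are all negligible, exactly as in the even case. Therefore the limit is $(\beta\log\lambda_u+\log\lambda_s)/(1+\beta)$.

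\emph{Strict inequality.} Put $\phi(t)=(t\log\lambda_u+\log\lambda_s)/(1+t)$. Then
\[
\phi'(t)=\frac{\log\lambda_u-\log\lambda_s}{(1+t)^2}>0,
\]
because $\lambda_s<1<\lambda_u$. Since $\beta>\alpha$ by {\bf OE}, we get $\phi(\alpha)<\phi(\beta)$.

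The only step needing care is the negligibility of the tangential times and the floor errors in $N_m$. The exponential growth of $n^H$ from {\bf OE}, combined with $n_k^T=o(n_k^H)$ from {\bf DH}, should handle it routinely.
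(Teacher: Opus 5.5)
Your proposal is correct and follows essentially the same route as the paper: the explicit {\bf OE} formulas with $k_0$ even, geometric sums with $O(m)$ floor errors that are negligible against exponential growth, and {\bf DH} to discard the tangential times. The small differences are cosmetic. Your pairing in the odd case avoids the paper's ratio $\sum_{j\le m-2}(\alpha\beta)^j/\sum_{j\le m-1}(\alpha\beta)^j\to 1/(\alpha\beta)$. Your monotonicity argument for $\phi(t)=(t\log\lambda_u+\log\lambda_s)/(1+t)$ supplies the strict inequality, which the paper leaves implicit.
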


\begin{proof}
Since $k_0$ was chosen to be even, we write $k_0=2q$.
By {\bf OE},
\[
n_{k_0+2p}^H
=
\lfloor n_0^H\alpha^{q+p}\beta^{q+p}\rfloor
=
c(\alpha\beta)^p+O(1),
\]
\[
n_{k_0+2p+1}^H
=
\lfloor n_0^H\alpha^{q+p+1}\beta^{q+p}\rfloor
=
c\alpha(\alpha\beta)^p+O(1)
\qquad (p\to\infty),
\]
where $c:=n_0^H\alpha^q\beta^q>0$.
Hence
\[
\sum_{j=0}^{m-1}n_{k_0+2j}^H
=
c\sum_{j=0}^{m-1}(\alpha\beta)^j+O(m),
\qquad
\sum_{j=0}^{m-1}n_{k_0+2j+1}^H
=
c\alpha\sum_{j=0}^{m-1}(\alpha\beta)^j+O(m).
\]
Therefore
\[
\log\Lambda_u^{(2m)}
=
c\alpha \log\lambda_u \sum_{j=0}^{m-1}(\alpha\beta)^j
+
c\log\lambda_s\sum_{j=0}^{m-1}(\alpha\beta)^j
+O(m),
\]
while
\[
\sum_{j=0}^{2m-1}n_{k_0+j}^H
=
c\alpha\sum_{j=0}^{m-1}(\alpha\beta)^j
+
c\sum_{j=0}^{m-1}(\alpha\beta)^j
+O(m).
\]
Since $\frac{\sum_{j=0}^{2m-1}n_{k_0+j}^H}{N_{2m}}\to 1$ by {\bf DH},
it follows that
\[
\lim_{m\to\infty}\frac{\log\Lambda_u^{(2m)}}{N_{2m}}
=
\lim_{m\to\infty}\frac{\log\Lambda_u^{(2m)}}{\sum_{j=0}^{2m-1}n_{k_0+j}^H}
=
\frac{\alpha\log\lambda_u+\log\lambda_s}{1+\alpha}.
\]

Next, notice that
\[
\sum_{j=0}^{m-2}n_{k_0+2j+1}^H
=
c\alpha\sum_{j=0}^{m-2}(\alpha\beta)^j+O(m),
\]
so
\[
\log\Lambda_u^{(2m-1)}
=
c\log\lambda_u\sum_{j=0}^{m-1}(\alpha\beta)^j
+
c\alpha\log\lambda_s\sum_{j=0}^{m-2}(\alpha\beta)^j
+O(m),
\]
while
\[
\sum_{j=0}^{2m-2}n_{k_0+j}^H
=
c\sum_{j=0}^{m-1}(\alpha\beta)^j
+
c\alpha\sum_{j=0}^{m-2}(\alpha\beta)^j
+O(m).
\]
Since $\frac{\sum_{j=0}^{2m-2}n_{k_0+j}^H}{N_{2m-1}}\to 1$ by {\bf DH}, and
\[
\frac{\sum_{j=0}^{m-2}(\alpha\beta)^j}{\sum_{j=0}^{m-1}(\alpha\beta)^j}\to \frac{1}{\alpha\beta}
\qquad (m\to\infty),
\]
we obtain
\[
\lim_{m\to\infty}\frac{\log\Lambda_u^{(2m-1)}}{N_{2m-1}}
=
\frac{\log\lambda_u+\beta^{-1}\log\lambda_s}{1+\beta^{-1}}
=
\frac{\beta\log\lambda_u+\log\lambda_s}{1+\beta}.
\]
\end{proof}

By {\bf DH},
\[
\frac{\log C_{f,m}}{N_m}
=
\frac{(n_{k_0}^T+\cdots+n_{k_0+m-1}^T)\log\Lambda_1}{n_{k_0}+\cdots+n_{k_0+m-1}}
\to 0
\]
as $m\to\infty$.
Combining this with \eqref{eq:0308d} and Lemma~\ref{lem:250404b}, we conclude that
\[
\lim_{m\to\infty}\frac{1}{N_{2m}}\log \|Df^{N_{2m}}(x)v\|
<
\lim_{m\to\infty}\frac{1}{N_{2m-1}}\log \|Df^{N_{2m-1}}(x)v\|.
\]
This completes the proof of Theorem~\ref{thm:main2}.
\begin{remark}
As can be seen from the above proof, the oscillation of the Lyapunov exponents in Theorem~\ref{thm:main2} is uniform over $U$ and $V$.
That is, under the identification of $V$ with the corresponding cone in $T_xM$ via the basis $(v^s,v^u)$, for all $x\in U$ and $v\in V$, it holds that
\begin{align*}
\liminf_{n\to\infty}\frac{1}{n}\log\|Df^n(x)v\| \le &\frac{\alpha\log\lambda_u+\log\lambda_s}{1+\alpha}\\
<&
\frac{\beta\log\lambda_u+\log\lambda_s}{1+\beta} \le \limsup_{n\to\infty}\frac{1}{n}\log\|Df^n(x)v\|.
\end{align*}
\end{remark}

\section*{Acknowledgement}
S.~Kiriki was supported by JSPS KAKENHI Grant Number 21K03332.
Y.~Nakano was supported by JSPS KAKENHI Grant  Number 23K03188 and JST PRESTO Grant Number JPMJPR25K8.
T.~Soma was supported by JSPS KAKENHI Grant Number 22K03342.

\bibliographystyle{abbrv}
\bibliography{ref}

\end{document}